\newcommand{\customlabel}[2]{%
   \protected@write \@auxout {}{\string \newlabel {#1}{{#2}{\thepage}{#2}{#1}{}} }%
   \hypertarget{#1}{#2}
}
\def\R{\mathbb{R}}
\def\X{\mathcal{X}}
\def\Measures{ \mathcal{M}(\R) }
\def\MSpacesmall{ \mathcal{X}_M }
\def\MSpacebig{ \MSpacesmall' }
\def\BSpace{ \mathcal{X}_B }
\def\LSpace{ \mathcal{X}_\Lambda }
\def\ISpace{ \mathcal{X}_I }
\def\JSpace{\mathcal{X}_J }
\def\Pc{\mathcal{P}}
\def\l{\mathcal{L}}
\def\lf{\mathcal{LF}}
\def\j{\mathcal{J}}
\def\H{H}
\def\loynes{\delta}
\def\Iy{ {I} }
\def\IM{ {I_M} }
\def\IL{ {I_\Lambda} }
\def\II{ {I_I} }
\def\ILo{ {I_{\loynes}} }
\def\IJ{{I_J}}
\def\estJ{ {J_n} }
\def\estM{ {M_n} }
\def\estI{ {I_n} }
\def\estL{ {L_n} }
\def\estLo{ {\loynes_n} }
\def\mover{ {\overline{m}} }
\def\munder{ {\underline{m}} }
\def\gl{ {\theta_{g,l} } }
\def\gr{ {\theta_{g,r} } }
\def\fl{ {\theta_{f,l} } }
\def\fr{ {\theta_{f,r} } }
\def\fBl{ {\eta_{f,l,k} } }
\def\fBr{ {\eta_{f,r,k} } }
\def\fs{ f^\sharp }
\def\fsl{ {\theta_{\fs_k,l} } }
\def\fsr{ {\theta_{\fs_k,r} } }
\def\Bc{ {\overline{B}} }
\def\xt{ {x_\theta} }
\def\xtpe{ {x_{\theta+\epsilon}} }
\def\xtme{ {x_{\theta-\epsilon}} }
\def\yt{ {y_\theta} }
\def\mup{ {f_{\mu_+}} }
\def\mum{ {f_{\mu_-}} }
\def\la{ {l_\alpha} }
\def\fmimic{ {f_{\nu}} }
\def\effdomf{ {\mathcal{D}_f }}
\def\effdomsub{ \mathcal{D}_{[\alpha,\beta]}^\subset }
\def\effdom{ {\mathcal{D}_{[\alpha,\beta]}} }
\def\estLambda{\Lambda_n}
\newlength{\noteWidth}
\long\def\notes#1{\ifinner
             {\tiny #1}
             \else
             \marginpar{\parbox[t]{\noteWidth}{\raggedright\tiny #1}}
             \fi}
\newcounter{rmnum}
\def\limsup{\mathop{\rm lim\ sup}}
\def\liminf{\mathop{\rm lim\ inf}}
\def\epi{ {\operatorname{epi}} }
\def\taw{ {\tau_{\text{AW}} } }
\def\N{ {\mathbb{N} }}
\def\Y{ {\mathcal{Y} }}
\newtheorem*{definition*}{Definition}
\newtheorem{definition}{Definition}[section]
\newtheorem{theorem}{Theorem}[section]
\newtheorem{corollary}{Corollary}[section]
\newtheorem{lemma}{Lemma}[section]
\newtheorem{proposition}{Proposition}[section]
\begin{document}
\title{Estimating large deviation rate functions}
\date{$21^{\rm st}$ August 2015}
\author{
Ken R. Duffy\thanks{
Hamilton Institute, National University of Ireland Maynooth, Ireland. 
E-mail: \texttt{ken.duffy@nuim.ie}}
\and 
Brendan D. Williamson\thanks{
Department of Mathematics, Duke University, Durham, North Carolina, USA.
E-mail: \texttt{brendan.williamson@duke.edu}}
}
\maketitle

\begin{abstract}

Establishing a Large Deviation Principle (LDP) proves to be a
powerful result for a vast number of stochastic models in many
application areas of probability theory. The key object of an LDP
is the large deviations rate function, from which probabilistic
estimates of rare events can be determined. In order make these
results empirically applicable, it would be necessary to estimate
the rate function from observations. This is the question we address
in this article for the best known and most widely used LDP:
Cram\'er's theorem for random walks.

We establish that even when only a narrow LDP holds for Cram\'er's
Theorem, as occurs for heavy-tailed increments, one gets a LDP for
estimating the random walk's rate function in the space of convex
lower-semicontinuous functions equipped with the Attouch-Wets
topology via empirical estimates of the moment generating function.
This result may seem surprising as it is saying that for Cram\'er's
theorem, one can quickly form non-parametric estimates of the function
that governs the likelihood of rare events.

\end{abstract}

\section{Introduction}

Large deviation theory \cite{Varadhan66,Dembo98}, the study of the
exponential decay in probability of unlikely events, has been used
extensively in fields such statistical mechanics
\cite{Ellis06,Touchette09}, insurance mathematics \cite{Asmussen10},
queueing systems \cite{Shwartz95,Ganesh04}, importance sampling
\cite{Dupuis04} and many others. In each of these fields, it is
typically the case that a Large Deviation Principle (LDP) is shown
to hold based on an assumed underlying stochastic model of the
process of interest. To quantify the rate of decay in the probability
of events as a function of system size, the LDP rate function, the
negative of the statistical mechanical entropy, is identified in
terms of the properties of the underlying stochastic process,
enabling direct estimates on the probability of rare events.

In many experimental systems, however, an \emph{a priori}
parameterization of the underlying stochastic nature of the system
is unknown and must be garnered from data. In these situations, to
transfer large deviation results from theory to empirical practice,
it is necessary to estimate the associated rate function from data
as a random function. It is the question of whether, in a non-parametric
setting, this is possible and, if so, what is the speed of convergence
of the estimates that is the subject considered here. 

In the most commonly used LDP, Cram\'er's theorem for random walks,
which underlies many other LDP results, we establish that non-parametric
estimation of the rate function is not only possible, but that the
probability of mis-estimation is, in appropriate sense, decaying exponentially in the
observed sample size. 

To make matters more precise, let $\{X_i\}$ be a sequence of
real-valued i.i.d. random variables, and define $S_n=(X_1+\cdots+X_n)/n$
to be the sample mean. If the Moment Generating Function (MGF)
of $X_1$, $M(\theta)=E(\exp(\theta X_1))$ for $\theta\in\R$, is
finite in a neighbourhood of the origin, then by Cram\'er's theorem
$\{S_n\}$ satisfies the LDP \cite{Varadhan66,Dembo98} with a convex
rate function $I$ that has compact level sets,
\begin{align*}
I(x) = \sup_{\theta\in\R}(\theta x - \Lambda(\theta)),
	\text{ where } \Lambda(\theta)=\log M(\theta).
\end{align*} 
Speaking roughly, for large $n$ this is suggestive of $dP(S_n= x)\asymp\exp(-n
I(x)) dx$. If, on the other hand, the MGF 
of $X_1$ is not finite in a neighbourhood of the origin, as happens
with heavy-tailed increments, then $\{S_n\}$ satisfies a narrow LDP
\cite{Dembo98}, where the rate function does not need to have compact
level sets.

If we do not know the distribution of $X_1$, but observe a sequence
$X_1,\ldots,X_n$, is it possible to create non-parametric estimates
of the rate function $I$ that are well-behaved in that they converge
quickly as a function of the sample size? Given that $I$ captures
the probability of unlikely events, the perhaps surprising answer
will prove to be yes.

Drawing parallels with chemical engineers who estimate entropy
directly rather than building parametric models, Duffield et al.
\cite{Duffield95b}, based on private communication with A. Dembo,
proposed using the logarithm of the Maximum Likelihood Estimator
(MLE) for the MGF as an estimate of the cumulant generating function
of tele-traffic streams. Even though the estimator proved resistant
to rigorous determination of its analytic properties, it seemed
practically applicable and so was put to use, e.g. \cite{Lewis98}.
Independently and a little later, a similar approach was developed
in statistical mechanics as a means of estimating equilibrium free
energy differences, where it is called Jarzynski's estimator
\cite{Jarzynski97}. Significant examples of its use in an experimental
context can be found in \cite{hummer2001,Liphardt2002,
Gupta2011,Saira2012}, with a recent theoretical study provided in
\cite{Rohwer14}.

Given observations $X_1,\ldots,X_n$, the estimator considered in
\cite{Duffield95b} is the maximum likelihood estimator of the MGF
as a random convex function:
\begin{align} 
\label{eq:estM}
\estM(\theta) = 
	\frac 1n \sum_{i=1}^n e^{\theta X_i} 
	\text{ for } \theta\in\R.  
\end{align} 
From this, their proposed estimate of the Cumulant Generating
Function (CGF) given $n$ observations is
\begin{align*}
\estLambda(\theta) = \log\estM(\theta) \text{ for } \theta\in\R,
\end{align*} 
which is also a convex function. Jarzynski's estimator, which could
be considered as an empirical estimate of the Effective Bandwidth
in teletraffic engineering \cite{Kelly96}, is given by
\begin{align} 
\label{eq:estJ}
\estJ(\theta) = \frac{1}{\theta} \estLambda(\theta) \text{ for } \theta\in\R.  
\end{align} 
Following \cite{Duffy05}, we use the Legendre-Fenchel transform of
the CGF estimator to give the following random function as an
estimate of the rate function:
\begin{align}
\label{eq:estI}
	\estI(x) = \sup_{\theta\in\R}(\theta x - \estLambda(\theta))
	\text{ for } x\in\R.
\end{align}
It is the large deviation behavior of the random functions $\{\estM\}$,
$\{\estLambda\}$, $\{\estJ\}$, and $\{\estI\}$ that is of interest
to us.

Point-estimate properties for a single fixed $\theta$ have been
established for $\{\estLambda\}$. For example, assuming $X_1$ is
bounded, \cite{Gyorfi00} provides concentration inequalities
establishing speed of convergence of the estimate, \cite{Ganesh96}
provides a means for correcting implicit bias in the estimation of
effective bandwidths, while \cite{Ganesh98, Ganesh02} provide for
a Bayesian approach. Motivated by Jarzynski's estimator, the recent
study \cite{Rohwer14} considers unbounded random variables, focusing
on the argument $\theta$ where the estimates become unreliable.  As
both our estimate of $I$, and many estimates of interest, depend upon
$\estLambda(\theta)$ for all $\theta$, however, for many applications
it is necessary to consider $\estLambda$ as a random function rather
than a random, extended real-valued estimate.

As examples of functions of interest, in the random walk case, it
is known, e.g. \cite{Glynn94,Duffy03,lelarge08,Asmussen10}, that
if $E(X_1)<0$, then the tail of the supremum of the random walk
satisfies
\begin{align}
\label{eq:loynes}
\lim_{q\to\infty}\frac1q\log P\left(\sup_{k\geq0} S_k>q\right) 
	= -\inf_{x>0} xI\left(\frac1x\right) 
	&= -\sup(\theta:\Lambda(\theta)\leq 0)
	=: -\loynes.
\end{align}
This tail asymptote, dubbed Loynes' exponent in \cite{Duffy11}, has
practical significance through its interpretation in terms of the
ruin probabilities of an insurance company \cite{Asmussen10} and
in terms of the tail asymptote for the waiting time of a single
server queue \cite{Asmussen03}. A natural estimate \cite{Duffield95b}
of Loynes' exponent is
\begin{align}
\estLo=\sup\{\theta:\estLambda(\theta)\leq 0\}.\label{eq:loynesest}
\end{align}
As an application, results concerning the behavior of the estimates
$\{\estLo\}$ will be established here.

As a second example of why it is valuable to have results on the
stochastic properties of the estimators as random functions rather
than single point values, it has recently been proved \cite{Duffy14}
that the most likely paths to a large integrated random walk with
negative drift \cite{CTCN,DuffyMeyn10, Kulick11, blanchet13} mimic
scaled versions of the function $-\Lambda(\theta)$ for
$\theta\in[0,\delta]$, where $\delta$ is defined in equation
\eqref{eq:loynes}. In order to empirically estimate these nature
of these paths, one must estimate the entire random function $\Lambda$
directly from observations.

In prior work \cite{Duffy05} it was shown that if $X_1$ is bounded
then $\{I_n\}$, considered as a sequence of random lower-semicontinuous
functions, satisfies the LDP in a suitable topological space. The
methods methods there do not generalize to the unbounded random
variable setting, which is often of interest in practice. Motivated
by the estimation of Loynes' exponent in equation \eqref{eq:loynes},
in \cite{Duffy11} it is conjectured that such a generalization is
true. Using a significantly distinct approach from that in
\cite{Duffy05}, here we establish that this is the case for any
distribution of $X_1$ on $ \R$.

\section{A topological setup suitable for the LDP}

Recall that a sequence of random elements $\{Y_n\}$ taking values
in a topological space $(\Y,\tau)$ satisfies the LDP
\cite{Varadhan66,Dembo98} if there exists a lower-semicontinuous
function $\Iy:\Y\mapsto[0,\infty]$ that has compact level sets such
that for all $G$ open and all $F$ closed
\begin{align}
\label{eq:LDP}
-\inf_{y\in G} \Iy(y) \leq
	\liminf_{n\to\infty} \frac 1n \log P(Y_n\in G) 
\text{ and }
\limsup_{n\to\infty} \frac 1n \log P(Y_n\in F) 
	\leq -\inf_{y\in F} \Iy(y).
\end{align}

Considering the sequences of estimators $\{\estM\}$, $\{\estLambda\}$
and $\{\estI\}$ as random lower semi-continuous convex functions,
we prove that they satisfy the LDP in suitable spaces equipped with
appropriate topologies. In particular, we consider
the $\estM$ first as elements of 
\begin{align*}
\MSpacebig = \{f:\R\mapsto[0,\infty] :
f \text{ is a lower-semicontinuous convex function with }
f(0) \text{ finite}\},
\end{align*}
and later as elements of 
\begin{align*}
\MSpacesmall=\{f:\R\mapsto(0,\infty]:\:&f\in\MSpacebig,
	\text{ and there exists a probability measure }\nu\text{ on }\R 
	\text{ such } \\ &\text{that }f(\theta)<\infty \text{ implies } f(\theta) = E_\nu(\exp(\theta x)) \}.
\end{align*}
The $\estLambda$ will be elements of
\begin{align*}
\LSpace = \{f:\R\mapsto(-\infty,\infty] :
f(\theta)=\log g(\theta)\text{ for all } \theta\in\R \text{ and for some }g\in\MSpacesmall\}
\end{align*}
and the $\estI$ be elements of
\begin{align*}
\ISpace = \left\{f:\R\mapsto[0,\infty] :
f(x)=\sup_{\theta\in\R}(\theta x-g(\theta))\text{ for all } x\in\R \text{ and for some }g\in\LSpace
\right\}.
\end{align*}
The space for Jarzysnky estimators, $\{\estJ\}$,
defined in (\ref{eq:estJ}), is a little more complex and will be elements of
\begin{align*}
\JSpace=\left\{f:\R\mapsto[-\infty,\infty] :
f(\theta)=\frac{g(\theta)}{\theta}\text{ for all }\theta\neq 
0\text{ and for some }g\in\LSpace, f(0)=\liminf_{\theta\rightarrow 0}f(\theta)\right\}.
\end{align*}
All of these spaces are equipped with the Attouch-Wets topology
\cite{Attouch83,Attouch86,Beer93}, denoted $\taw$, which is also
known as the bounded-Hausdorff topology, and its Borel $\sigma-$algebra.
This topology was first developed to capture a good notion of
convergence of optimization problems defined through a sequence of
lower-semicontinuous functions. At a fundamental level, its
appropriateness for our needs is demonstrated by the functional
continuity of the Legendre-Fenchel transform, as used in equation
\eqref{eq:estI}, \cite{Beer93}[Theorem 7.2.11].

For proper extended real-valued functions defined over the reals,
the topology is constructed in the following fashion.
Each lower-semicontinuous function $f:\R\to[-\infty,\infty]$ is
uniquely identified with a closed set in $\R^2$, its epigraph
$\epi(f)=\{(\theta,b):b\geq f(\theta)\}$. One then defines convergence of
the functions based on a notion of convergence of closed sets. In
particular, it is based on a projective limits topology using a
bounded-Hausdorff idea: a sequence $\{f_n\}$ of lower-semicontinuous
functions converges to $f$ in $\taw$, if given any bounded set
$B\in\R\times\R$ and any $\epsilon>0$, there exists $N_\epsilon$
such that
\begin{align}
\sup_{x\in B}|d(x,\epi(f_n))-d(x,\epi(f))| <\epsilon \text{ for all }
n>N_\epsilon,
\text{ where } d(x,\epi(f)) = \inf_{y\in\epi(f)} d(x,y)\label{eq:topology}
\end{align}
and we employ the box metric in $\R^2$, $d(x,y)=\max(|x_1-y_1|,|x_2-y_2|)$.

Note that not every element of $\JSpace$ is lower semi-continuous.
Every $f\in\JSpace$ is continuous on the interior of the interval
on which it is finite, and lower semi-continuous on $(0,\infty)$,
as in both cases every CGF is. Therefore the only point at which
$f$ can fail to be lower semi-continuous is
$a=\inf\{\theta:f(\theta)>-\infty\}$ if $a>-\infty$, $a<0$ and
$f(a)>-\infty$. For such functions $f$ we will associate $f$ with
the closure of its epigraph in $\R^2$, or equivalently, with the
epigraph of its lower semi-continuous regularisation,
$\overline{f}(\theta)=\liminf_{\gamma\rightarrow \theta}f(\gamma)$.
In this setting, the symmetric difference of $\epi(f)$ and its
closure is at most one half-line in $\R^2$, and moreover the mapping
from functions in $\JSpace$ to the closure of their epigraphs is
injective, justifying this approach. For this reason, and for ease
of notation, for every $f\in\JSpace$ we will use the notation
$\epi(f)$ to denote the closure of the epigraph of $f$ without
clarification.

As well as the continuity of the Legendre-Fenchel transform, this
topology has many properties that are appropriate for our estimation
problem and that more commonly used function space topologies do not
possess. For example, the following sequence of functions, intended
to be indicative of possible estimates of the cumulant generating
function when $X_1$ has an exponential distribution with rate $1$,
\begin{align*}
f_n(\theta) = 
	\begin{cases}
	e^\theta & \text{ if } \theta \leq 1,\\
	e+n(\theta-1) & \text{ if } \theta > 1,\\
	\end{cases}
\end{align*}
would not be convergent in the topology of uniform convergence on
compacts or the Skorohod topology, but in $\taw$ 
converge to 
\begin{align*}
f(\theta) = 
	\begin{cases}
	e^\theta & \text{ if } \theta \leq 1,\\
	+\infty & \text{ if } \theta > 1,
	\end{cases}
\end{align*}
which is self-evidently desirable. In particular, the topology
captures closeness of functions when their effective domains do
not coincide.

The LDP for each of these collection of estimators is established
in the coming sections. We first prove that the sequence of
maximum likelihood estimators for the MGFs, $\{\estM\}$ defined in
equation \eqref{eq:estM}, satisfy the LDP. From this the LDP for
estimates of the CGF and the rate function shall be obtained by
the contraction principle \cite{Dembo98}.

\section{Statement of Main Results}

In order to establish the results, a substantial volume of work is
necessary, including the characterization of Attouch-Wets limits
of MGFs. The statement of the main results are also a little involved
as it happens that $I_M(f)<\infty$ for some functions $f\in\MSpacebig$
that are not MGFs; such functions are what motivates the definition of $\MSpacesmall$. All proofs are deferred to
Section \ref{sec:proofs}. 

Let the measure on $\R$ corresponding to
$X_1$ be denoted $\mu$ and let $\nu$ be any other measure. We define
the relative entropy, e.g. \cite{Dembo98}, as
\begin{align*}
\H(\nu|\mu) = \begin{cases}
	\displaystyle \int_\R \frac{d\nu}{d\mu}
		\log\left(\frac{d\nu}{d\mu}\right) d\mu
		& \text{ if } \nu << \mu\\	
	+\infty & \text{otherwise},
		\end{cases}
\end{align*}
where $\nu<<\mu$ indicates that $\nu$ is absolutely continuous with
respect to $\mu$ and $d\nu/d\mu$ is the Radon-Nikodym derivative.

\begin{definition}
For any measure $\nu$ we define the MGF associated to $\nu$, $f_\nu\in\MSpacebig$,
by 
\begin{align}
\label{eq:mgfnu}
f_\nu (\theta) = \nu(\exp(\theta\cdot)) = \int_\R e^{\theta x}d\nu.
\end{align}
\end{definition}

\begin{definition}
For any function $f\in\MSpacebig$, let
$\effdomf=\{\theta:f(\theta)<\infty\}$ denote its effective domain
and $\overline{\effdomf}$ the closure of $\effdomf$.
For any $\alpha,\beta$ satisfying $-\infty\leq\alpha\leq 0\leq\beta\leq\infty$ define
$\effdomsub=\{f\in\MSpacebig:\overline{\effdomf}\subset[\alpha,\beta]\}$, the
set of all functions $f\in\MSpacebig$ whose closure of effective domain is a
subset of $[\alpha,\beta]$. Define
$\effdom=\{f\in\MSpacebig:\overline{\effdomf}=[\alpha,\beta]\}$, the
set of all functions $f$ whose closure of effective domain is
$[\alpha,\beta]$.
\end{definition}
When considering $[\alpha,\beta]\subset\R$ we identify
$[\alpha$ with $(\alpha$ when $\alpha=-\infty$, and $\beta]$ with
$\beta)$ when $\beta=\infty$.
\begin{definition}
For any function $f\in\MSpacebig$ that is not a MGF, i.e. for
which there exists no probability measure $\nu$ such that
$f(\theta)=f_\nu(\theta)$ for all $\theta\in\R$, but instead
satisfies $f(\theta)=f_\nu(\theta)$ for some $\nu$ and all $\theta\in\mathcal{D}_f$,
then we say that $f$ mimics $\fmimic$.
\end{definition}
Note that $0\in\effdomf$ for all $f\in\MSpacebig$, so if $f$ mimics
$f_\nu$ then $f(0)=1$. Also, $f\in\MSpacebig$ satisfies $f\in\MSpacesmall$
if and only if $f$ is a MGF or mimics one. Armed with one more set
of definitions, we can state our main results.
\begin{definition}
Define the logarithmic operator $\l:\MSpacesmall\rightarrow\LSpace$
by 
\begin{align}
\label{def:l}
\l(f)(\theta)=\log(f(\theta)) \text{ for all } \theta\in\R
\end{align}
where by convention $\log(\infty)=\infty$. Define
the Jarzynky operator $\j:\LSpace\rightarrow \JSpace$ by 
\begin{align}
\label{def:j}
\j(f)(\theta)=
	\begin{cases}
	f(\theta)/\theta &\theta\neq 0 \\
	\displaystyle\liminf_{\theta\rightarrow 0}f(\theta) &\theta=0.
	\end{cases}
\end{align}
Finally, define the Legendre-Fenchel transform $\lf:\LSpace\rightarrow\ISpace:$ by
\begin{align}
\label{def:lf}
\lf(f)(x)=\sup_{\theta\in\R}(\theta x-f(\theta)),
\end{align}
for $x\in\R$.
\end{definition}
Notice that these functions are bijective, so that their inverse exists. In fact, $\lf$ is an involution \cite{Rockafellar70}[Theorem 26.5], justifying the presentation of the following statements.
\begin{theorem}[LDP for large deviation estimates]\label{thm:MLDP}
If $\{X_n\}$ are i.i.d. real valued random variables, then the
following hold.
\begin{enumerate}
\item
	The sequence of empirical MGF estimators, $\{\estM\}$ defined in
	\eqref{eq:estM}, satisfies the LDP in $\MSpacesmall$ equipped with $\taw$
	and with the convex rate function $\IM$ that possesses the following properties.
	\begin{enumerate}[(a)]
	\item For any $f_\nu$ finite on a non-empty open interval,
	\begin{align*}
	\IM(f_\nu)=\H(\nu|\mu).
	\end{align*}
	\item For $f$ such that $f(0)=1$ and $f(\theta)=\infty$ otherwise, 
	\begin{align*}
	\IM(f)=\inf_{\{\nu:f_\nu=f\}}\H(\nu|\mu). 
	\end{align*}
	\item For any $f\in\effdom$ that is not a MGF, but mimics 
	a MGF $\fmimic$,
	\begin{align*}
	\IM(f)
		&=\begin{cases}
		\IM(\fmimic) 
			&\text{if }\IM(g)<\infty\text{ for some }g\in\effdom,\;g\text{ a MGF} \\
			+\infty &\text{otherwise.}
		\end{cases}
	\end{align*}
	\end{enumerate}
\item
	The sequence of empirical CGF estimators,
	$\{\estLambda\}$, satisfies the LDP in $\LSpace$ equipped with $\taw$ and
	rate function
	\begin{align*}
	\IL(f) = \IM(\l^{-1}(f)) = \{\IM(g): g(\theta)=\exp(f(\theta)) 
		\text{ for all } \theta\in\R\}.
	\end{align*}
	\item 
	The sequence of empirical Jarzynski estimators, $\{\estJ\}$,
	satisfies the LDP in $\JSpace$ equipped with $\taw$ and rate function
	\begin{align*}
	\IJ(f) = \IM(\l^{-1}(\j^{-1}(f))) = \left\{\IM(g): 
		g(\theta)=\exp\left(\theta f(\theta)\right)\text{ for all } \theta\neq 0, g(0)=1\right\}.
	\end{align*}
\item
	The sequence of empirical rate function estimators, $\{\estI\}$,
	satisfies the LDP in $\ISpace$ equipped with $\taw$ and rate function
	\begin{align*}
	\II(f) = \IM(\l^{-1}(\lf^{-1}(f))) = \left\{\IM(g): 
		g(\theta)=\exp\left(\sup_{x\in\R}(x\theta-f(x))\right) 
		\text{ for all } \theta\in\R\right\}.
	\end{align*}
\end{enumerate}
\end{theorem}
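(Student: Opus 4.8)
The plan is to establish part~1 --- the LDP for the empirical MGF estimators $\{\estM\}$ in $\MSpacesmall$ --- in full, and then to read off parts~2--4 from it by the contraction principle. The key observation is that $\estM=f_{L_n}$, the MGF of the empirical measure $L_n=\frac1n\sum_{i=1}^n\delta_{X_i}$, so that $\{\estM\}$ is the image of $\{L_n\}$ under the map $T\colon\nu\mapsto f_\nu$ from probability measures on $\R$ into $\MSpacebig$. Sanov's theorem supplies the LDP for $\{L_n\}$ in the weak topology with good rate function $\H(\cdot\,|\mu)$, so the question is how to push this forward. The naive hope would be to apply the contraction principle to $T$, but $T$ is not continuous into $(\MSpacebig,\taw)$ and its image is not closed: when mass in $\nu_n$ escapes to $\pm\infty$ the effective domain of $f_{\nu_n}$ can contract in the limit, so that the $\taw$-limit agrees with some MGF $\fmimic$ only on a smaller interval --- it ``mimics'' $\fmimic$ in the sense defined above. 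This is not a defect to be worked around but the crux: such limits are genuinely attained by $\{\estM\}$, at a finite relative-entropy cost, which is exactly what cases~(b) and~(c) have to quantify. Accordingly, a preliminary step is to characterise the $\taw$-limits of sequences of MGFs --- every one is either an MGF $f_\nu$ or mimics an MGF --- which simultaneously explains why $\MSpacesmall$, not $\MSpacebig$, is the correct state space and delineates where $\IM$ is finite.

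With this in hand, I would transfer the Sanov bounds. Exponential tightness is cheap: since $\estM(0)=1$ the estimators lie in a $\taw$-compact subset of $\MSpacebig$, so it suffices to prove the upper bound over compact sets and the lower bound over open sets and then identify the rate function on $\MSpacesmall$. For the upper bound, write $P(\estM\in F)=P(L_n\in T^{-1}(F))$, apply Sanov to the weak closure of $T^{-1}(F)$, and use the one-sided (upper-semicontinuity) properties of $T$ from the characterisation above --- escape of mass can only shrink the limiting MGF, never enlarge it past its effective domain --- to conclude, after shrinking the neighbourhood, the bound with rate $\IM$. For the lower bound I would, at a given $f\in\MSpacesmall$, construct probability measures whose MGFs enter any prescribed $\taw$-neighbourhood of $f$ with relative entropy tending to $\IM(f)$. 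If $f=f_\nu$ is finite on an open interval this is just the continuity of $T$ at $\nu$, giving case~(a); if $f$ is supported only at the origin it gives case~(b); and if $f$ mimics $\fmimic$ with $\overline{\effdomf}=[\alpha,\beta]$ one approximates $f$ in $\taw$ by honest MGFs built from $\fmimic$ by exponential tilting and truncation designed to force the blow-up at the endpoints of $[\alpha,\beta]$ --- this can be done at finite cost exactly when $\effdom$ already contains some MGF of finite relative entropy, and the least such cost is $\IM(\fmimic)$, which is the dichotomy recorded in case~(c). Since $\{\estM\}\subset\MSpacesmall$ and every function with $\IM<\infty$ lies in $\MSpacesmall$, the LDP then descends from $\MSpacebig$ to $\MSpacesmall$ with its relative topology.

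For parts~2--4 I would invoke the contraction principle together with the $\taw$-continuity of the operators $\l\colon\MSpacesmall\to\LSpace$, $\j\colon\LSpace\to\JSpace$ and $\lf\colon\LSpace\to\ISpace$: continuity of the Legendre-Fenchel transform $\lf$ is \cite{Beer93}[Theorem 7.2.11], and continuity of $\l$ and $\j$ is obtained by a direct analysis of how composition with $\log$, respectively division by $\theta$ with the stated regularisation at the origin, acts on epigraphs (the closure-of-epigraph convention adopted for $\JSpace$ being precisely what makes $\j$ well behaved). As these maps are bijective, the contraction principle yields the LDP for $\{\estLambda\}=\{\l(\estM)\}$, $\{\estJ\}=\{\j(\estLambda)\}$ and $\{\estI\}=\{\lf(\estLambda)\}$ in the stated spaces, with rate functions given by $\IM$ composed with the relevant inverse, which is precisely the displayed form of $\IL$, $\IJ$ and $\II$. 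The main obstacle is part~1, and within it the lower bound in case~(c): carrying out the $\taw$-approximation of a mimicking function by genuine MGFs while ensuring the relative-entropy cost does not overshoot $\IM(\fmimic)$, and dovetailing this with the upper bound so that the two rates coincide.
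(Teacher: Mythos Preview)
Your route for parts~2--4 is the paper's route: once the LDP for $\{\estM\}$ is in hand, continuity of $\l$, $\j$ and $\lf$ in $\taw$ together with the contraction principle give the rest, and the paper proves exactly those continuity lemmas. Your preliminary step (classifying $\taw$-limits of MGFs) and your exponential-tightness observation ($\{f:f(0)\le1\}$ is $\taw$-compact) are also the paper's Proposition~4.1 and Proposition~4.5. The divergence is in how part~1 itself is obtained.

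The paper does \emph{not} push Sanov forward through $T\colon\nu\mapsto f_\nu$. It first proves the LDP for $\{\estM\}$ abstractly by the Ruelle--Lanford super-additivity method: it builds, at each $f$ with $f(0)=1$, a local \emph{convex} base $\{A_k(f)\}$ for $\taw$ (the standard base $\{V_k(f)\}$ is not convex), and uses the identity $M_{1,n+m}=\tfrac{n}{n+m}M_{1,n}+\tfrac{m}{n+m}M_{n+1,n+m}$ together with convexity of $A_k(f)$ to get super-additivity of $\log P(\estM\in A_k(f))$, hence $\mover(A_k(f))=\munder(A_k(f))$. This yields an LDP with \emph{some} good rate function $\IM$, whose form is then identified in a separate step. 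That identification does use Sanov one-sidedly, much as you propose, to get $\IM(f_\nu)\ge H(\nu\,|\,\mu)$ and $\IM(f)\ge \IM(\fmimic)$ for mimics; the reverse inequalities come from lower-semicontinuity and convexity of $\IM$.

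Your Sanov-only plan has a genuine gap at the $+\infty$ branch of case~(c). Pushing the Sanov upper bound through $T$ gives, for a descending base $G_m\ni f$ with $f\in\effdom$ mimicking $\fmimic$,
\[
\limsup_{n}\tfrac1n\log P(\estM\in G_m)\ \le\ -\inf_{\kappa\in\overline{T^{-1}(G_m)}^{\,w}}H(\kappa\,|\,\mu),
\]
and the paper's own computation (Lemma~5.1, point~3) shows $\bigcap_m\overline{T^{-1}(G_m)}^{\,w}=\{\nu\}$. So the best Sanov can deliver at such an $f$ is $\IM(f)\ge H(\nu\,|\,\mu)$, which may be \emph{finite} even when the theorem asserts $\IM(f)=\infty$. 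Your sketch asserts the dichotomy (``can be done at finite cost \emph{exactly} when $\effdom$ contains some MGF of finite relative entropy'') but provides no mechanism for the ``only if'' direction, and Sanov does not supply one. The paper closes this gap with an argument orthogonal to Sanov (Lemma~5.2, $(1)\Rightarrow(2)$): if $e^{\gamma X_1}$ satisfies Cram\'er for some $\gamma\notin[\alpha,\beta]$, then $\estM\in V_k(f)$ forces the one-dimensional average $\estM(\gamma)=\tfrac1n\sum e^{\gamma X_i}$ to exceed $k$, and Cram\'er's good rate function for that scalar sequence drives $P(\estM\in V_k(f))$ down faster than $e^{-cn}$ for every $c$, giving $\IM(f)=\infty$. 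The hard implication $(4)\Rightarrow(1)$ of Lemma~5.2 then shows this Cram\'er hypothesis is \emph{equivalent} to the failure of the finiteness condition in~(c). Without either this ingredient or the super-additivity argument that first pins down $\IM$ intrinsically, your upper and lower bounds will not meet at mimicking functions in the $+\infty$ regime, and the LDP is not established. The paper in fact flags this obstruction explicitly at the start of Section~5: a contraction from Sanov ``almost suffices'' but the resulting rate would not coincide with $\IM$ at mimics.
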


The LDPs for the sequences $\{\estLambda\}$, $\{\estJ\}$ and $\{\estI\}$ follow from
that for $\{\estM\}$ by the contraction principle on noting that
the functionals mapping $\estM$ to $\estLambda$, $\estLambda$ to $\estJ$ and $\estLambda$ to $\estI$ are continuous. 
Thus the primary effort in establishing Theorem \ref{thm:MLDP} is
to prove the LDP for the MGF estimators, $\{\estM\}$, which can be
found in Section \ref{MGFEst}, followed by the characterization of
the rate function, which is described in Section \ref{charsec}.

From Theorem \ref{thm:MLDP}, we prove that the estimators $\{\estM\}$,
$\{\estLambda\}$, $\{\estJ\}$ and $\{\estI\}$ converge in probability
to $f_\mu$, $\Lambda_\mu$, $J_\mu$ and $I_\mu$, the MGF, CGF,
effective bandwitch and rate function of the underlying distribution.
This is harder to establish than one might reasonably expect. It
is a well-known result of Large Deviation Theory, e.g. \cite{Lewis95},
that the sequences of probability measures satisfying a LDP with
rate function $I$ are eventually concentrated on the level set
$\{x:I(x)=0\}$, which is compact.  Proving eventual concentration
on smaller sets has also been explored \cite{Lewis95A}. As $\IM$
does not have a unique zero in general, convergence of $\{\estM\}$
to $f_\mu$ in probability is not immediate.  However we do not apply
the results of \cite{Lewis95A} and instead take an alternate
approach to prove the following result.

\begin{corollary}[Weak laws]
\label{cor:weaklaw}
$\{\estM\}$ converges in probability to $f_\mu$, while $\{\estLambda\}$, $\{\estJ\}$ and $\{\estI\}$ converge in probability to $\Lambda_\mu$, $J_\mu$ and $I_\mu$,
respectively.
\end{corollary}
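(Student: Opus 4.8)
The plan is to prove the stronger statement that $\{\estM\}$ converges to $f_\mu$ \emph{almost surely} in $(\MSpacesmall,\taw)$ and then to transport this conclusion along the continuous maps $\l,\j,\lf$. I emphasise at the outset that this route deliberately avoids reading the weak law off the LDP of Theorem~\ref{thm:MLDP}: by part~1(c) the zero set of $\IM$ is in general strictly larger than $\{f_\mu\}$ --- for example, when $f_\mu$ is finite on all of $\R$ there exist functions with bounded effective domain that mimic $f_\mu$, hence have $\IM=0$ by 1(c), yet lie far from $f_\mu$ in $\taw$ --- so the standard concentration-on-the-zero-set principle, even in the sharpened form of \cite{Lewis95A}, does not yield the claim. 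A pathwise argument is, by contrast, insensitive to the size of the zero set.

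First I would record the pointwise behaviour. Fix $\theta\in\R$; the variables $e^{\theta X_i}$ are nonnegative and i.i.d. If $f_\mu(\theta)=E(e^{\theta X_1})<\infty$, the strong law of large numbers gives $\estM(\theta)=\frac1n\sum_{i=1}^n e^{\theta X_i}\to f_\mu(\theta)$ a.s. If $E(e^{\theta X_1})=\infty$, then for each $K>0$ the strong law applied to the bounded variables $e^{\theta X_i}\wedge K$ gives $\liminf_n\estM(\theta)\geq E(e^{\theta X_1}\wedge K)$ a.s., and letting $K\uparrow\infty$ yields $\estM(\theta)\to\infty=f_\mu(\theta)$ a.s. Intersecting over a countable dense set $Q\subset\R$, almost surely $\estM(\theta)\to f_\mu(\theta)$ for all $\theta\in Q$ simultaneously. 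Note moreover that each $\estM$ is the MGF of the empirical measure $n^{-1}\sum_i\delta_{X_i}$, hence a convex element of $\MSpacesmall$ finite on all of $\R$, while $f_\mu$ is a lower-semicontinuous convex element of $\MSpacesmall$ with $f_\mu(0)=1$.

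It then remains to upgrade this pointwise convergence of the convex functions $\{\estM\}$ to convergence in $\taw$ with limit $f_\mu$. On $\mathrm{int}(\mathcal D_{f_\mu})$ this presents no difficulty, since a sequence of convex functions converging pointwise on a dense set converges locally uniformly on the interior of the effective domain of the (finite, convex) limit \cite{Rockafellar70}; the delicate region is near the endpoints of $\overline{\mathcal D_{f_\mu}}$, where pointwise and Attouch--Wets convergence can genuinely disagree. Writing $\theta_+=\sup\mathcal D_{f_\mu}$, in the case $\theta_+<\infty$ I would argue separately: when $f_\mu(\theta_+)<\infty$ the strong law gives $\estM(\theta_+)\to f_\mu(\theta_+)$ a.s., and when $f_\mu(\theta_+)=\infty$ the truncation bound of the previous paragraph gives $\estM(\theta_+)\to\infty$ a.s.; combined with convexity and the convergence of subgradients of $\estM$ at interior differentiability points of $f_\mu$ \cite{Rockafellar70}, this forces the epigraphs to agree near the right endpoint, and symmetrically at the left. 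Making this boundary analysis precise --- equivalently, verifying that the pointwise limit of the $\{\estM\}$ is indeed their Attouch--Wets limit --- is the step I expect to be the main obstacle; it is precisely the effective-domain-versus-topology interplay that the characterization of Attouch--Wets limits of MGFs in Section~\ref{MGFEst} is designed to resolve, so the required input is at hand. Granting it, $\estM\to f_\mu$ almost surely in $\taw$, hence also in probability; and since the maps $\l:\MSpacesmall\to\LSpace$, $\j:\LSpace\to\JSpace$ and $\lf:\LSpace\to\ISpace$ are continuous (as was already used to pass the LDP through the contraction principle), $\estLambda=\l(\estM)\to\l(f_\mu)=\Lambda_\mu$, $\estJ=\j(\estLambda)\to\j(\Lambda_\mu)=J_\mu$ and $\estI=\lf(\estLambda)\to\lf(\Lambda_\mu)=I_\mu$ almost surely, and therefore in probability.
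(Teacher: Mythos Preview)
Your approach is sound and genuinely different from the paper's. You establish pointwise convergence of $\estM$ to $f_\mu$ via the strong law (with truncation where $f_\mu=\infty$) and then upgrade to Attouch--Wets convergence using convexity and the compactness of $\{f:f(0)\le 1\}$; this yields almost sure convergence, which is strictly stronger than what the corollary claims. The paper instead reads the weak law off the LDP: it combines eventual concentration on the zero set $A_0=\{f:\IM(f)=0\}$ with the weak law of large numbers applied to the closed half-spaces $C_x^\gamma=\{f:f(\gamma)\le x\}$, and then uses normality of the space to intersect these and squeeze down to $\{f_\mu\}$. Your route is more elementary and self-contained (it does not need the LDP at all), while the paper's route is of independent methodological interest precisely because it shows how to recover a weak law from an LDP whose rate function has a non-singleton zero set.

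The step you flag and ``grant'' is indeed the only real work, and it is fillable with tools already in the paper: by Proposition~\ref{prop:Mcomp} every subsequence of $\{\estM\}$ has a $\taw$-convergent sub-subsequence, and it suffices to show every such limit $g$ equals $f_\mu$. On $\operatorname{int}\mathcal D_{f_\mu}$ this is forced by local uniform convergence of convex functions; outside $\overline{\mathcal D_{f_\mu}}$ it is forced by $\estM(\theta)\to\infty$; at a finite boundary point $b$ the left-continuity of MGFs gives $\lim_{\theta\uparrow b}f_\mu(\theta)=f_\mu(b)$, which together with lower semicontinuity of $g$ pins down $g(b)=f_\mu(b)$. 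The one case your sketch does not explicitly cover is $\mathcal D_{f_\mu}=\{0\}$: there one must additionally check that $\inf_\theta \estM(\theta)\to 1$ almost surely, which follows from the elementary bound $\estM(\theta)\ge e^{-|\theta|K}\cdot n^{-1}\#\{i:|X_i|\le K\}$ combined with $\theta_n^\ast\to 0$; equivalently, Glivenko--Cantelli shows the empirical distributions form a tight family, ruling out the Type~2 alternative in Proposition~\ref{prop:limitsprop}. With that in hand your argument closes, and the transfer to $\{\estLambda\},\{\estJ\},\{\estI\}$ via the continuous maps $\l,\j,\lf$ is identical to the paper's.
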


In Section \ref{loynessec}, as an example application of these
results, the following LDP is established for estimates of Loyne's
exponent along with a discussion of some of the properties of its
associated rate function, $\ILo$.

\begin{theorem}[LDP for Loynes' exponent estimates]\label{thm:loynes}
The sequence of Loynes' estimators,
$\{\estLo\}$ defined in (\ref{eq:loynesest}), satisfies a LDP in $[0,\infty]$ with rate function
$\ILo:[0,\infty]\rightarrow[0,\infty]$,
\begin{align*}
\ILo(x)=\inf_{f\in C_x}\IM(f),
\end{align*}
where 
\begin{align*}
C_x&=\{f:f(x)=1,\text{ or }f(x)<1\text{ and }f(y)=\infty\text{ for all } y>x\} \text{ for } x\in(0,\infty), \\
C_0&=\{f:f(x)\geq 1 \text{ for all } x>0\}, \\
 \text{ and } C_\infty&=\{f:f(x)\leq 1\text{ for all } x\geq 0\}.
\end{align*}
\end{theorem}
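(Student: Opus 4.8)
The plan is to derive this LDP from part~1 of \Thm{thm:MLDP} by a direct two-sided estimate rather than by a contraction principle, because the functional producing $\estLo$ is not continuous for $\taw$. Since $\estLambda=\log\estM$ and $\estM$ is finite and continuous on $\R$ with $\estM(0)=1$, one has $\estLo=\Psi(\estM)$ where $\Psi(f)=\sup\{\theta:f(\theta)\le1\}$; as $f(0)=1$ for every $f\in\MSpacesmall$, this $\Psi$ takes values in $[0,\infty]$. Convexity together with $f(0)=1$ supplies the combinatorial backbone: for $x>0$ one has $\{f:\Psi(f)\ge x\}=\{f:f(x)\le1\}$, if $f(y)\le1$ for some $y>x$ then $f(x)\le1$, and $\Psi(f)=x$ forces $f(x)\le1$ and $f>1$ on $(x,\infty)$. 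Although $f\mapsto f(x)$ is not $\taw$-continuous, the first technical step is to check that the sets $\{f\in\MSpacesmall:f(x)\le1\}$ and $\{f\in\MSpacesmall:f(y)\ge1\text{ for all }y>x\}$ are $\taw$-closed, which uses only the two epi-convergence inequalities enjoyed by Attouch--Wets limits.

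For the upper bound, compactness of $[0,\infty]$ reduces the problem to a local estimate at each $x_0$. When $x_0\in(0,\infty)$, take $U=(x_0-\delta,x_0+\delta)$ and use the inclusion $\{\estLo\in U\}\subset\{\estM(x_0-\delta)\le1\}\cap\{\estM(y)>1\text{ for all }y>x_0+\delta\}$, together with the fact that the $\taw$-closure of the second set lies inside $A_\delta:=\{f:f(x_0-\delta)\le1\}\cap\{f:f(y)\ge1\text{ for all }y>x_0+\delta\}$, which is $\taw$-closed; \Thm{thm:MLDP}(1) then gives $\limsup_n\frac1n\log P(\estLo\in U)\le-\inf_{A_\delta}\IM$. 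Letting $\delta\downarrow0$, the $A_\delta$ decrease to $A_0:=\{f:f(x_0)\le1\}\cap\{f:f\ge1\text{ on }(x_0,\infty)\}$, so goodness of $\IM$ yields $\inf_{A_\delta}\IM\uparrow\inf_{A_0}\IM$ and the matter reduces to the identity $\inf_{A_0}\IM=\inf_{f\in C_{x_0}}\IM$; the endpoints $x_0=0$ and $x_0=\infty$ are treated the same way, the relevant closed sets decreasing to $C_0$ and to $C_\infty$. For the lower bound, given $G$ open, $x\in G$ and $f\in C_x$ with $\IM(f)<\infty$, it suffices to exhibit $f'$ with $\IM(f')\le\IM(f)+\epsilon$ and an open $\taw$-neighbourhood $V\ni f'$ with $\Psi(V)\subset G$: then $P(\estLo\in G)\ge P(\estM\in V)$ and the $\{\estM\}$ lower bound finishes it. When $f$ crosses level $1$ transversally near $x$ one may take $f'=f$, since $\Psi$ is continuous there --- a short bounded-Hausdorff ball argument; the delicate members of $C_x$ are those meeting level $1$ only tangentially, or flat at height $1$ on an interval (the functions mimicking $\delta_0$), and for these I would perturb to an MGF $f_\nu$ with $\nu\ll\mu$ that crosses $1$ transversally inside $G$, keeping $\H(\nu|\mu)$ under control via the convexity and lower semicontinuity of relative entropy. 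Lower semicontinuity of $\ILo$, which is all the goodness needed since $[0,\infty]$ is compact, follows from the variational formula and the goodness of $\IM$.

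The main obstacle is this rate-function identification, namely $\inf_{A_0}\IM=\ILo(x_0)$ and its lower-bound counterpart. The upper bound only delivers the infimum of $\IM$ over a closed \emph{superset} of $C_{x_0}$, and that superset admits extra elements --- notably the constant function $1$ and functions equal to $1$ on an interval and $+\infty$ beyond it --- so one has to show these degenerate functions never undercut $\inf_{C_{x_0}}\IM$; symmetrically, the neighbourhoods constructed for the lower bound must not overshoot it. This is precisely where \Thm{thm:MLDP}(1)(c) is decisive: a function mimicking a measure carries the same $\IM$-value as the measure whenever that value is ``achievable'' over the pertinent class of effective domains, and reconciling this with the closed supersets above --- while separately settling the endpoints $x=0,\infty$ --- is the heart of the argument.
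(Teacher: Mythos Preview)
Your approach is genuinely different from the paper's: the paper applies Garcia's extension of the contraction principle, first showing that the Loynes map $\Psi$ is continuous at every $f\in\MSpacesmall$ except $f_{\delta_0}$ and the functions mimicking it, and then verifying Garcia's two conditions at those exceptional points by exhibiting explicit interpolants $\alpha f_\nu+(1-\alpha)f_{\delta_0}$ whose $\IM$-values converge. Your direct upper/lower-bound route can be made to work and is essentially Garcia's argument unpacked by hand; it trades a single black-box citation for more bookkeeping.

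However, you have misdiagnosed the main difficulty. Your ``main obstacle'' is not one: in fact $A_0=C_{x_0}$ on the nose. If $f\in\MSpacesmall$ is convex with $f(0)=1$ and $f(x_0)=1$, then $f\le1$ on $[0,x_0]$ and, by convexity, $f\ge1$ on $[x_0,\infty)$; conversely if $f(x_0)<1$ and $f\ge1$ on $(x_0,\infty)$ with $f$ finite at some $y>x_0$, continuity on the interior of the effective domain forces $f(x_0)=\lim_{t\downarrow x_0}f(t)\ge1$, a contradiction, so $f=\infty$ there. Thus the ``extra elements'' you list (the constant $1$ and functions equal to $1$ on an interval and $+\infty$ beyond) already lie in $C_{x_0}$, and there is nothing to reconcile. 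The identity $\inf_{A_\delta}\IM\uparrow\inf_{C_{x_0}}\IM$ then follows immediately from goodness of $\IM$, and your upper bound is clean.

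The genuine work is entirely in the lower bound, precisely at the discontinuity set of $\Psi$. Once you observe (as above) that any $f\in C_x$ with $f(x)=1$ and $f\not\equiv1$ on $[0,x]$ has $\Psi(f)=x$ with $\Psi$ continuous there, and likewise for $f(x)<1$ with $f=\infty$ on $(x,\infty)$, the only case needing a perturbation is $f=f_{\delta_0}$ or a mimic thereof. Your sketch ``perturb to an MGF $f_\nu$ with $\nu\ll\mu$ crossing $1$ transversally inside $G$'' is the right idea, but you should be concrete: the paper uses $l_\alpha=\alpha f_\nu+(1-\alpha)f_{\delta_0}$ with a suitable $\nu$ (a convex combination of $\mu$ conditioned on half-lines), and computes $\H(\alpha\nu+(1-\alpha)\delta_0\mid\mu)\to\H(\delta_0\mid\mu)$ directly; this is more delicate than an appeal to convexity and lower semicontinuity of relative entropy, since one needs $\Psi(l_\alpha)\in G$ simultaneously with $\IM(l_\alpha)\to\IM(f_{\delta_0})$. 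So your plan is sound, but the weight of the argument sits where you least expected it.
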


In the proofs of some of the results in Section \ref{loynessec}
other characterisations of $\ILo$ are considered. The form in
Theorem
\ref{thm:loynes} arises most naturally in our proof of the LDP
and, as discussed in Section \ref{loynessec}, is illustrative of
the discontinuity of the Loynes' exponent mapping.

\section{MGF Estimation}
\label{MGFEst}

For the MGF estimates, we will first establish a LDP in $\MSpacebig$, and then reduce it to a LDP in $\MSpacesmall$. Our method of proof will be first to exclude
functions that could not possibly be close to estimates and then
use the super-additivity methodology pioneered by Ruelle \cite{Ruelle65}
and Lanford \cite{Lanford73}, and elucidated in \cite{Lewis95,Dembo98},
to establish the LDP. Namely, for any open $G\in\taw$, define
\begin{align*}
\munder(G) = \liminf_{n\to\infty} \frac 1n \log P(\estM\in G).
\text{ and }
\mover(G) = \limsup_{n\to\infty} \frac 1n \log P(\estM\in G)
\end{align*}
and their inf-derivatives
\begin{align*}
\inf_{G\ni f} \munder(G) \in[-\infty,0],
\text{ and }
\inf_{G\ni f} \mover(G) \in[-\infty,0] 
\end{align*}
where the infimum is taken over all open sets $G$ containing $f$
or, indeed, $G$ in any local base of the topology around $f$. 
The inf-derivatives are referred to as the lower and upper deviation
functions, respectively, \cite{Lewis95}. When they coincide for all
$f\in\MSpacebig$, they provide the candidate rate function
\begin{align}
\label{eq:weakLDP}
\IM(f) := -\inf_{G\ni f} \munder(G) 
	= -\inf_{G\ni f} \mover(G) \in[0,\infty],
\end{align}
and the sequence $\{\estM\}$ satisfies the weak \cite{Dembo98}, or
vague \cite{Lewis95}, large deviation principle with rate function
$\IM$.  That is, with the LDP upper bound, equation \eqref{eq:LDP},
only holding for all compact sets rather than all closed sets.  The
full LDP, including goodness of the rate function, is then proved
by establishing that exponential tightness holds; i.e. that there is a
sequence of compact sets whose complementary probabilities are
decaying at an arbitrarily high rate.

This super-additivity approach does not provide the characterisation
of $\IM$ described in Theorem \ref{thm:MLDP} and instead that is
developed in Section \ref{charsec}.

\subsection{Reduction of the space}
We wish to show that equation \eqref{eq:weakLDP} holds for all
$f\in\MSpacebig$. We begin this process by eliminating cases where
necessarily $\inf_{G\ni f} \mover(G) =-\infty$. To determine which
functions we must consider, we need to characterise the closure of
the support of $ P(\estM\in\cdot)$, as any functions, $f$, outside
this set will have an open neighbourhood $G$ such that $ P(\estM\in
G)=0$ for all $n$, so that $\inf_{G\ni f}\mover(G)= \inf_{G\ni
f}\munder(G)=-\infty$ and \eqref{eq:weakLDP} holds.

Defining
\begin{align}
\label{def:bspace}
\BSpace=
	\left\{f_\nu:\nu\text{ is compactly supported in }\R\right\} 
	\subseteq \MSpacebig,
\end{align}
we have that $P(\estM\in\BSpace)=1$ for all $n$. To see which
functions lie in its closure $\overline{\BSpace}$, whose complement
forms part of the set of impossible estimates, we establish the
following result.

\begin{proposition}[Characterization of possible limits of MGFs in $\MSpacebig$]
\label{prop:limitsprop}
If $\{f_n\}\subset\BSpace$ is a convergent sequence in $\MSpacebig$
with limit $f\in\MSpacebig$ in $\taw$, then $f$ satisfies one
of the following:
\begin{enumerate}
\item $f$ is a MGF;
\item $f(0)<1$;
\item $f$ mimics a MGF.
\end{enumerate}
\end{proposition}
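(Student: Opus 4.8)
\emph{Proof proposal.} The plan is to extract from $\{\nu_n\}$ a limiting measure, to identify $f$ with the MGF of that measure on the interior of its effective domain, and then to distinguish cases according to whether any mass of the $\nu_n$ escapes to $\pm\infty$. Write $f_n=f_{\nu_n}$ with each $\nu_n$ a compactly supported probability measure, so that $f_n(0)=1$ for every $n$. Since $\taw$-convergence entails $d(x,\epi(f_n))\to d(x,\epi(f))$ for each $x$ and $\epi(f)$ is closed, the fact that $(0,1)\in\epi(f_n)$ for all $n$ forces $(0,1)\in\epi(f)$, i.e.\ $f(0)\le 1$; if $f(0)<1$ we are in the second case, so assume henceforth $f(0)=1$. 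By standard properties of the Attouch--Wets topology on convex functions \cite{Beer93}, $f_n\to f$ pointwise on $\operatorname{int}(\effdomf)$. Finally, the sub-probability measures on $\R$ are vaguely compact and metrizable, hence sequentially compact, so after passing to a subsequence (which still has $\taw$-limit $f$) we may assume $\nu_n\to\nu$ vaguely for some $\nu$ with $\nu(\R)\le 1$; the defect $1-\nu(\R)$ is exactly the mass that escapes to $\pm\infty$.

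The core step is to show $f=f_\nu$ on $\operatorname{int}(\effdomf)$. Fix $\theta\in\operatorname{int}(\effdomf)$, say $\theta>0$, and pick $\theta'\in\operatorname{int}(\effdomf)$ with $\theta'>\theta$, so that $C:=\sup_n f_n(\theta')<\infty$ since $f_n(\theta')\to f(\theta')<\infty$. The elementary tail bounds $\int_{x>K}e^{\theta x}\,d\nu_n\le e^{(\theta-\theta')K}C$ and $\int_{x<-K}e^{\theta x}\,d\nu_n\le e^{-\theta K}$ show that the contribution to $\int e^{\theta x}\,d\nu_n$ from outside $[-K,K]$ is, uniformly in $n$, negligible as $K\to\infty$; combining this with vague convergence on each $[-K,K]$ (choosing $\pm K$ as continuity points of $\nu$) and monotone convergence in $K$ gives $f_n(\theta)=\int e^{\theta x}\,d\nu_n\to f_\nu(\theta)$, whence $f(\theta)=f_\nu(\theta)$; the case $\theta<0$ is symmetric. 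Since $f$ and $f_\nu$ are convex and lower semicontinuous, each coincides with its one-sided limit at any finite endpoint of its domain, and hence $f=f_\nu$ throughout $\effdomf$.

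We now conclude the trichotomy. Suppose first that $0\in\operatorname{int}(\effdomf)$. Then $f$ is finite at some $\theta>0$ and at some $\theta<0$, and from $\nu_n(\{x\ge K\})\le e^{-\theta K}\sup_m f_m(\theta)$ together with its reflection we obtain $\sup_n\nu_n(\{|x|>K\})\to 0$; thus no mass escapes and $\nu$ is a probability measure. Since $f=f_\nu$ on $\effdomf$, either $f$ is a MGF, or --- not being a MGF but agreeing with $f_\nu$ on $\effdomf$ --- it mimics $f_\nu$. Suppose instead that $0$ is an endpoint of $\effdomf$; reflecting $x\mapsto-x$ if necessary, assume $\effdomf\subseteq[0,\infty)$. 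If $\effdomf$ is nondegenerate, say $\effdomf=[0,\beta)$ or $[0,\beta]$ with $\beta>0$, then lower semicontinuity and convexity of $f_\nu$ and the identity $f=f_\nu$ on $(0,\beta)$ give $f(0)=\lim_{\theta\downarrow 0}f_\nu(\theta)=f_\nu(0)=\nu(\R)$; since $f(0)=1$ this forces $\nu(\R)=1$, so $\nu$ is a probability measure and, exactly as above, $f$ is a MGF or mimics $f_\nu$. If $\effdomf=\{0\}$, then $f(0)=1$ and $f(\theta)=+\infty$ for $\theta\neq 0$; such an $f$ is the MGF of any probability measure with no exponential moments on either side (e.g.\ a Cauchy distribution), so $f$ is a MGF. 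This exhausts the possibilities.

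The principal obstacle is the core step together with the boundary bookkeeping: transferring the exponential integrals $\int e^{\theta x}\,d\nu_n$ across a merely vague limit while correctly accounting for mass leaking to $\pm\infty$, and then pinning down the value of $f$ at the endpoints of $\effdomf$. The most delicate configuration is the degenerate domain $\effdomf=\{0\}$, where the identity $f(0)=\nu(\R)$ breaks down --- all mass can escape while $f(0)=1$ --- and which therefore has to be treated separately. The remaining inputs, namely the continuity of convex functions on the interiors of their domains and their agreement with one-sided limits at domain endpoints, are standard facts of convex analysis.
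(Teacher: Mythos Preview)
Your argument is correct and shares the paper's essential ingredients --- Helly/vague compactness to extract a limiting (sub-)probability measure and a uniform-integrability estimate (boundedness of $\sup_n f_n(\theta')$ at a larger $\theta'$) to identify $f$ with $f_\nu$ on $\operatorname{int}(\effdomf)$ --- but the case decomposition is genuinely different. The paper organizes the proof around the mass defect of the Helly limit $F$, setting $a=\lim_{x\to-\infty}F(x)$ and $b=\lim_{x\to\infty}F(x)$ and treating the four $(a,b)$ regimes separately; in particular, the mixed regime ($a=0$, $b<1$) requires a non-trivial computation to force $f(0)\le b<1$. You instead dispose of $f(0)<1$ first, then split according to whether $0$ lies in $\operatorname{int}(\effdomf)$, is an endpoint of a nondegenerate $\effdomf$, or $\effdomf=\{0\}$; the endpoint identity $f(0)=\lim_{\theta\downarrow 0}f_\nu(\theta)=\nu(\R)$ then forces $\nu$ to be a probability measure whenever $f(0)=1$ and $\effdomf$ is nondegenerate, which subsumes the paper's mixed-defect analysis in a single line. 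What the paper's decomposition buys is a more explicit description of \emph{how} the three types arise in terms of where mass escapes; what your decomposition buys is brevity, since you never need to estimate $f(0)$ via $E(e^{\theta X_n^*})$ as the paper does. Your treatment of the degenerate case $\effdomf=\{0\}$ (noting that the spike function is the MGF of, e.g., a Cauchy law) is also more direct than the paper's, which folds this into the $a>0$, $b<1$ regime.
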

\begin{proof}[Proof of Proposition \ref{prop:limitsprop}]
See Section \ref{sec:proofs}.
\end{proof}

For the remainder of this section we refer to these classes of
functions as Type 1, 2 and 3 respectively. Establishing the possible
existence of these limits can be done by example.

That Type 1 functions exist is self-evident.  For Type 2 functions,
consider $X_n$ equal to $n$ or 0, each with probability 1/2. Then
the MGF of $X_n$ is $f_n(\theta)=\frac{1}{2}+e^{\theta n}/2$. This
converges in $\taw$ to $f(\theta)=1/2$ for $\theta\leq 0$ and
$f(\theta)=\infty$ otherwise. For Type 3 functions, the function
$f(\theta)=1$ for $\theta\leq 0$ and $f(\theta)=\infty$ otherwise
is seen to be the limit of the functions $f_n(\theta)=(n-1)/n+e^{\theta
n}/n$,
 which are the MGFs of the random
variables $X_n$ that are equal to $n$ with probability $1/n$ and 0
otherwise. The function $f$ mimics the MGF of the weak limit of
$\{X_n\}$, however $f$ itself is not a MGF. Showing that all limits 
of $\{f_n\}$ are in one of these classes
is a bigger task for which we adopt a common tactic when considering
the limits of MGFs: an application of the Helly Selection Principle,
e.g. \cite{Natanson55}, to the corresponding sequence of distribution
functions.

It is worth noting that every MGF is in the closure of $\BSpace$,
defined in equation \eqref{def:bspace}. If we take any distribution
$\nu$, and let $\nu_n(dx)=\nu(dx)/\nu([-n,n])$ be $\nu$ conditioned
on $[-n,n]$, then $f_{\nu_n}\in\BSpace$ for all $n$ and
$f_{\nu_n}\rightarrow f_\nu$ in $\taw$. Choosing to let $\BSpace$
only contain MGFs whose distributions have compact support is done
only to ensure every element of $\BSpace$ is finite everywhere,
which simplifies the proof of Proposition \ref{prop:limitsprop}.

\subsection{A local convex base}

Let $B_r=\{x:d(0,x)<r\}\in\R^2$ be the open ball of radius $r$ in
$\R^2$ and $\Bc_r = \{x:d(0,x)\leq r\}\in\R^2$ be its closure. 
To use the super-additivity approach to establish \eqref{eq:weakLDP}
we need to construct a
local convex base for the topology $\taw$, but this is not possible in
general. 
The
following collection of sets
\begin{align} \label{eq:beerbase}
V_k(f) = 
\left\{g:\sup_{x\in \Bc_k}|d(x,\epi(g))-d(x,\epi(f))|<\frac 1k\right\},
	\text{ for } k\in\N,
\end{align} 
is known to form a local base for the Attouch-Wets topology
\cite{Beer93}. The sets $V_k(f)$ defined in equation \eqref{eq:beerbase}
are not, however, typically convex in the sense that if $g,h\in
V_k(f)$ then we cannot deduce that $l_\alpha$, defined by
$l_\alpha(\theta)=\alpha g(\theta)+(1-\alpha)h(\theta)$ for all
$\theta\in\R$ and for every $\alpha\in(0,1)$, is in $V_k(f)$. As a
counter example, graphically illustrated in Figure \ref{fig:nonconvex},
consider $V_2(g)$ where, for any $\beta>2$,
\begin{align*}
g(\theta) = 
	\begin{cases}
	0 & \text{ if } \theta = 0,\\
	\infty & \text{ if } \theta \neq 0,
	\end{cases}
\text{  }
h(\theta)=
	\begin{cases}
	1-\beta\theta  & \text{ if } \theta \in[0,1/\beta],\\
	\infty & \text{ if } \theta\notin[0,1/\beta],
	\end{cases}
\text{ and thus }
l_{1/2}(\theta) = 
	\begin{cases}
	1/2 & \text{ if } \theta = 0,\\
	\infty & \text{ if } \theta \neq 0,
	\end{cases}
\end{align*}
so that  
\begin{align*}
\sup_{x\in \Bc_2}|d(x,\epi(g))-d(x,\epi(h))|=1/\beta,
\text{ but }
\sup_{x\in \Bc_2}|d(x,\epi(g))-d(x,\epi(l_{1/2}))|=1/2,
\end{align*}
so that $g,h\in V_2(g)$, but $l_{1/2}\notin V_2(g)$. 

\begin{figure}[h]
\begin{center}
\includegraphics[scale=0.6, trim=0 0 0 0]{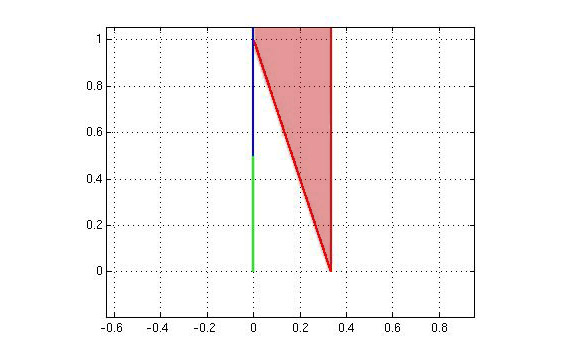}
\caption{The usual local base for $\taw$ is not convex. For $\beta=3$
and with the functions defined in the text,
this is illustrated here for $V_2(g)$, where $g$ (green) and
$h$ (red) are in $V_2(g)$, but their linear combination $l_{1/2}$ (blue)
is not.}
\label{fig:nonconvex}
\end{center}
\end{figure}

Although the base $\{V_k(f)\}$ is not convex, it will suffice for
our initial elimination of impossible MGF estimates. For the core
result, we introduce a new base that is convex where it matters;
that is, at functions that could appear as limits of MGF estimates.

As convergence in $\taw$ does not imply point-wise convergence
\cite{Beer93}, even though $\estM(0)=1$ for all $n$ it is possible
that $\estM(0)$ converges to a value less than $1$ in $\taw$. As
a direct example, consider the
following sequence:
\begin{align*}
f_n(\theta) = 
	\begin{cases}
	n\theta+1 & \text{ if } |\theta|\leq 1/n,\\
	+\infty & \text{ if } |\theta|>1/n.
	\end{cases}
\end{align*}
Point-wise we have that $f_n(0)$ converges to $1$, but $\epi(f_n)$
converges in $\taw$ to the epigraph of the function that is $0$ at
$0$ and $+\infty$ elsewhere. This is illustrated in Figure \ref{fig:f0}
 and occurs as the topology of point-wise convergence is neither
stronger nor weaker than $\taw$ \cite{Beer93}. 

As a result, we must include in our considerations functions for
which $f(0)<1$, but it is not always the case that a local convex
base exists for these functions. To see this consider $f$ satisfying
$f(0)<1$ and $f(\theta)=\infty$ for $\theta\neq 0$. Assume $C_k(f)$
forms a local convex base for $f$, and consider $k$ so that $C_k(f)$
does not contain the function $g$ satisfying $g(0)=1$ and
$g(\theta)=\infty$ for $\theta\neq 0$. Then take two 
functions in $C_k(f)$, one infinite on the right half-plane,
and one infinite on the left half-plane; any non-trivial convex
combination of them will equal $g$. 

Similarly, when constructing the convex base we must rely on functions
that satisfy $f(0)>1$, which is why we included them in $\MSpacebig$.
Despite these issues, the following result shows directly that the
rate function evaluated at these functions is $+\infty$.

\begin{proposition}[Functions with infinite rate]\label{prop:Lessthan1}
For any $f\in\MSpacebig$ such that $f(0)\neq 1$, 
\begin{align*}
\inf_{G\ni f}\mover(G)=-\infty.
\end{align*}
\end{proposition}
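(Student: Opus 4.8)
The plan is to exploit the fact that $\estM(0) = 1$ deterministically for every $n$, combined with the observation that $\taw$-convergence does control the behaviour of $\epi(g)$ near the point $(0,1)$ once we test against a large enough ball. Concretely, I would fix $f \in \MSpacebig$ with $f(0) = c \neq 1$ and split into the two cases $c < 1$ and $c > 1$, aiming in each case to produce a basic open neighbourhood $G = V_k(f)$ of $f$ (from the base in \eqref{eq:beerbase}) such that $P(\estM \in G) = 0$ for all $n$; this immediately gives $\inf_{G \ni f}\mover(G) = -\infty$, since $\mover(V_k(f)) = -\infty$ whenever the event has probability zero.

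First I would treat $c > 1$. The point $p = (0,1)$ lies strictly below $\epi(f)$, so $d(p, \epi(f)) \geq \min(c-1, \text{something positive}) =: 2\rho > 0$; more carefully, because $f$ is lower-semicontinuous and convex with $f(0) = c > 1$, there is $\rho > 0$ with $d((0,1), \epi(f)) > 2\rho$. On the other hand, every $\estM$ has $(0,1) \in \epi(\estM)$, so $d((0,1), \epi(\estM)) = 0$. Choosing $k$ large enough that $\Bc_k \ni (0,1)$ and $1/k < 2\rho$, the defining inequality of $V_k(f)$ forces $|d((0,1),\epi(\estM)) - d((0,1),\epi(f))| < 1/k < 2\rho$, i.e. $d((0,1),\epi(f)) < 2\rho$, a contradiction. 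Hence $\estM \notin V_k(f)$ for all $n$, and $P(\estM \in V_k(f)) = 0$.

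For $c < 1$ the argument is the mirror image but applied to the point $q = (0,1)$ viewed from the perspective of $\estM$: now $(0, c) \in \epi(f)$ while $\epi(\estM)$ contains $(0,1)$ and, being the epigraph of a convex function through $(0,1)$, has $d((0, c), \epi(\estM)) \geq 1 - c > 0$ for every $n$ (the nearest point of $\epi(\estM)$ to $(0,c)$ in the box metric is at least vertical distance $1-c$ away, since $\estM(\theta) > 0$ and $\estM(0) = 1$, and I would check the box-metric geometry here — this is the one spot needing a short explicit estimate). Meanwhile $d((0,c), \epi(f)) = 0$. Picking $k$ with $(0,c) \in \Bc_k$ and $1/k < 1 - c$, membership $\estM \in V_k(f)$ would give $d((0,c), \epi(\estM)) < 1/k < 1-c$, again a contradiction, so $P(\estM \in V_k(f)) = 0$.

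The main obstacle I anticipate is purely the box-metric bookkeeping in the $c < 1$ case: one must rule out that a point of $\epi(\estM)$ with large $|\theta|$ (where $\estM(\theta)$ could be small if all the $X_i$ are, say, positive, making $\estM$ decreasing on $\theta < 0$... actually $\estM$ need not be monotone) comes within box-distance $1-c$ of $(0,c)$. But this is handled by noting $d((0,c),(\theta,b)) = \max(|\theta|, |b - c|) \geq |b-c|$ and $b \geq \estM(\theta)$; since it suffices to find \emph{one} good neighbourhood, I can instead simply use that $\estM(0) = 1$ so the point of $\epi(\estM)$ on the vertical line $\theta = 0$ nearest $(0,c)$ is at distance exactly $1-c$, and any point off that line has $|\theta| \geq \delta$, so shrinking the ball radius (equivalently enlarging $k$) so that $1/k < \min(1-c, \delta)$ for a suitable small $\delta$ is not even needed — the vertical-line contribution alone already gives $d((0,c),\epi(\estM)) \le 1-c$ but I need the reverse inequality $d((0,c),\epi(\estM)) \ge 1-c$, which follows since for \emph{any} $(\theta,b) \in \epi(\estM)$ either $\theta = 0$, forcing $b \ge 1$ hence $|b-c| \ge 1-c$, or $\theta \ne 0$, and then by convexity and $\estM(0)=1$ one still has $b \ge \estM(\theta) > 0$ — the clean statement is that $\inf\{b : (\theta,b)\in\epi(\estM),\ |\theta|\le 1-c\} $ is bounded below by a positive constant depending only on $c$, which I would make precise with convexity of $\estM$ and $\estM(0) = 1$. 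Everything else is routine once the right test point and ball radius are identified.
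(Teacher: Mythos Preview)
Your argument for $f(0)=c>1$ is correct and is essentially the same observation the paper uses (inside the proof of Proposition~\ref{prop:limitsprop}): since $(0,1)\in\epi(\estM)$ always but $(0,1)\notin\epi(f)$, a small enough basic neighbourhood of $f$ excludes every $\estM$.

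The case $c<1$, however, has a genuine gap. Your proposed lower bound $d((0,c),\epi(\estM))\geq 1-c$ (or indeed any fixed positive bound independent of the sample) is false. Take for instance $\estM(\theta)=\tfrac12(1+e^{m\theta})$, which arises when half the observations equal $0$ and half equal some large $m$. For small $\theta<0$ and large $m$ the point $(\theta,\estM(\theta))\in\epi(\estM)$ has $|\theta|$ arbitrarily small and $\estM(\theta)$ arbitrarily close to $\tfrac12$, so with $c=\tfrac12$ the box distance $\max(|\theta|,|\estM(\theta)-c|)$ is arbitrarily small. Convexity and $\estM(0)=1$ give you nothing here: the graph of $\estM$ can dip below $1$ immediately to one side of the origin and hover as close to $c$ as you like. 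More to the point, the paper shows just before the proposition that functions with $f(0)<1$ \emph{do} arise as $\taw$-limits of elements of $\BSpace$, so for unbounded $\mu$ one cannot expect $P(\estM\in V_k(f))=0$; the probability is merely decaying super-exponentially, and that is what has to be proved.

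The paper's route is quite different: it pushes the problem over to the space of probability measures via the (well-defined on $\BSpace$) map $\Phi$ taking a MGF to its underlying measure, bounds $P(\estM\in G_m)=P(\estL\in A_m)$ from above by Sanov's theorem, and then argues that the intersection $C=\bigcap_m\overline{A_m}$ of the image sets is empty when $f(0)\neq 1$, because any $\nu\in C$ would be the weak limit of measures whose MGFs converge to $f$, which the analysis in Proposition~\ref{prop:limitsprop} shows forces mass to escape to infinity. This entropy-and-tightness argument is what replaces the direct geometric obstruction you were looking for in the $c<1$ case.
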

\begin{proof}
See Section \ref{sec:proofs}.
\end{proof}

\begin{figure}[h]
\centering
\includegraphics[scale=0.6]{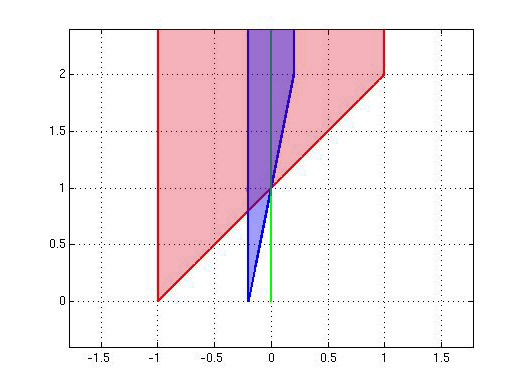}
\caption{$f_1$ in red, $f_5$ in blue, and $f$ in green.}
\label{fig:f0}
\end{figure}

For the function satisfying $f(0)=1$ and $f(x)=\infty$ for $x\neq0$,
we rely on the idea tha $V_k(f)\cap\BSpace$ is convex to prove subadditivity. For the
general class of functions satisfying $f(0)=1$ and finite on a
non-empty open interval, we can construct a local convex base
$\{A_k(f)\}$ such that
\begin{align}
\label{eq:RLf}
\mover(A_k(f))=\munder(A_k(f)).
\end{align}
This will enable us to deduce the weak \cite{Dembo98} (or vague
\cite{Lewis95}) LDP from which the LDP follows by Proposition
\ref{prop:Mcomp}.

The idea for the base $A_k(f)$ is to consider a small vertical shift
of $f$, decrease its effective domain slightly and then intersect
an element of the non-convex base defined in equation \eqref{eq:beerbase}
around this new function with all the functions whose epigraph
strictly contains the resulting curtailed function's epigraph.
We begin this process by defining the shifted and curtailed functions
$\{\fs_k\}$ from which the base will be built. 
\begin{definition}
For each $f\in\MSpacebig$ such that $f(0)=1$ and $\effdomf\neq\{0\}$,
and for each $k\in\N$, let 
\begin{align}
\label{eq:fB}
\fBl = \inf\{\theta:(\theta,f(\theta))\in B_{2k+2}\}
\text{ and }
\fBr = \sup\{\theta:(\theta,f(\theta))\in B_{2k+2}\}.
\end{align}
Let $0<\epsilon<(\fBr-\fBl)/2$ be such that
\begin{align}
d((\alpha,f(\alpha)),(\beta,f(\beta))) &< \frac{1}{2k}
	\text{ for all } \alpha,\beta\in [\fBl,\fBl+\epsilon] \\
\text{ and }
d((\alpha,f(\alpha)),(\beta,f(\beta))) &< \frac{1}{2k}
	\text{ for all } \alpha,\beta\in [\fBr-\epsilon, \fBr].
	\label{eq:overhangs}
\end{align}
Then we define $\fs_k$ by
\begin{align*}
\fs_k(\theta) =
	\begin{cases}
	\displaystyle f(\theta) +\frac{1}{2k} 
		& \text{ if } \theta\in [\fBl+\epsilon,\fBr-\epsilon] \\ 
	+\infty & \text{ otherwise.}
	\end{cases}
\end{align*}
\end{definition}
This curtailing process is illustrated in Figure \ref{fig:fsharp}.

\begin{figure}[h]
\includegraphics[scale=0.4]{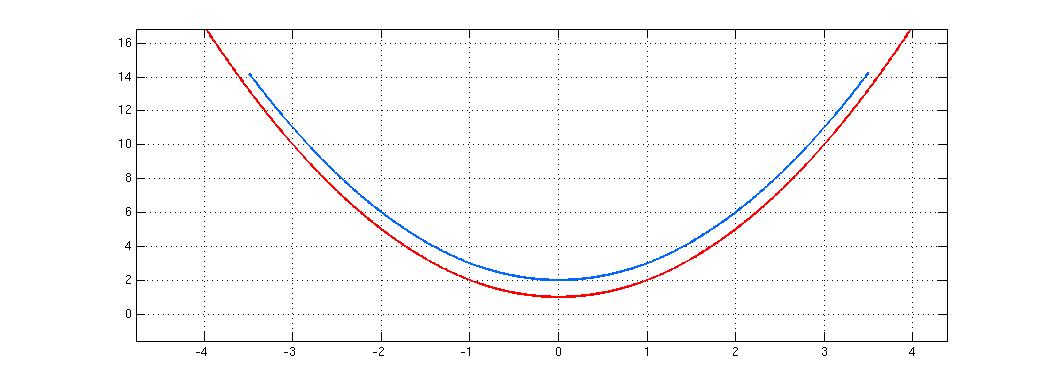}
\caption{Illustration of the curtailing of $f(x)=x^2+1$ (red) to
$\fs_7$ (blue), whose shrunk domain is
$\eta_{f,r,7}=\sqrt{15}=-\eta_{f,l,7}$. Both the $\epsilon$ curtailment
and the $1/2k$ vertical shift of $\fs_7$ are exaggerated for
illustrative purposes.}
\label{fig:fsharp}
\end{figure}

Using these shifted functions, we construct a
collection of sets that we shall prove form a local, convex base
at $f$. In order to do so, we require the following piece of notation.

\begin{definition}
For each $f\in\MSpacebig$ such that $f(0)=1$ and $\effdomf\neq\{0\}$,
define
\begin{align*}
\fl = \inf\effdomf\text{ and }\fr = \sup\effdomf.
\end{align*}
For two functions
$f$ and $g$, we write $g\lll f$ if $\gl<\fl$, $\fr<\gr$, and $g(\theta)<f(\theta)$ for all $\theta\in[\fl,\fr]$. 
\end{definition}
Notice that equivalently we can say that $g\lll f$ if $\epi(f)\subset\epi(g)$,
$\fl$ and $\fr$ are finite and 
\begin{align*}
\inf_{x\in\partial\epi(f),y\in\partial\epi(g)}d(x,y)>0, 
\end{align*}
where $\partial$ denotes the boundary of a set, which is the property
that motivates this definition.
\begin{figure}[h]
\center
\includegraphics[scale=0.4]{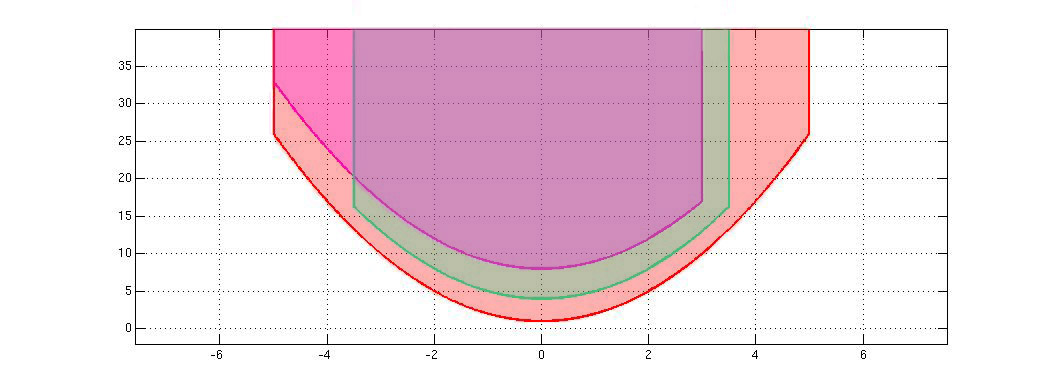}
\caption{$f$, $g$ and $h$ in red, purple and green respectively. $h\lll f$, but $g\lll\llap{/\;\;\:} f$, as $\gl=\fl$.}
\end{figure} 
\begin{definition}
For each $f$ such that $f(0)=1$ and $\effdomf\neq\{0\}$ and each $k\in\N$,
define the set
\begin{align*}
A_k(f) = V_k(\fs_k)\cap W(\fs_k),
\end{align*}
where $V_k$ is defined in equation \eqref{eq:beerbase} and $W(\fs_k)
= \{g: g \lll \fs_k\}$. 
\end{definition}

\begin{proposition}[Local convex base]
\label{prop:Abase}
For each $f$ such that $f(0)=1$ and $\effdomf\neq\{0\}$, $\{A_k(f)\}$ forms a
local convex base at $f$.
\end{proposition}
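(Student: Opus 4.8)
The plan is to verify the two defining properties of a local convex base at $f$ separately: first that each $A_k(f)$ is an open convex set containing $f$, and second that $\{A_k(f)\}$ is \emph{local}, i.e.\ every $\taw$-neighbourhood of $f$ contains some $A_k(f)$. Openness of $A_k(f) = V_k(\fs_k)\cap W(\fs_k)$ is immediate since $V_k(\fs_k)$ is a basic open set for $\taw$ and $W(\fs_k)=\{g:g\lll\fs_k\}$ is open because, by the reformulation noted just before the definition, $g\lll\fs_k$ is equivalent to $\epi(\fs_k)\subset\epi(g)$ together with a strict positive separation of the boundaries, both of which are preserved under small $\taw$-perturbations (bounded-Hausdorff closeness near the relevant compact ball). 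That $f\in A_k(f)$ requires checking $f\in V_k(\fs_k)$ — which follows from the construction of $\fs_k$ as a $1/(2k)$ vertical shift of $f$ after an $\epsilon$-curtailment controlled by the overhang estimates \eqref{eq:overhangs}, so that $\epi(f)$ and $\epi(\fs_k)$ are within $1/k$ in the box metric on $\Bc_k$ — and $f\lll\fs_k$, which holds because $\fs_k$ sits strictly above $f$ on $[\fBl+\epsilon,\fBr-\epsilon]$ and has strictly smaller effective domain than $f$ (recall $\effdomf$ is a nontrivial interval, so $\fl<\fBl+\epsilon$ and $\fBr-\epsilon<\fr$).

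Convexity is the crux and is where the curtailment was engineered. Suppose $g,h\in A_k(f)$ and let $l_\alpha=\alpha g+(1-\alpha)h$. Both $g\lll\fs_k$ and $h\lll\fs_k$ force $\effdomg,\effdom[h]$ to contain the finite interval $[\fsl,\fsr]$ in their interiors, with $g,h<\fs_k$ there; since $l_\alpha$ lies pointwise between $g$ and $h$, its effective domain also strictly contains $[\fsl,\fsr]$, its left/right endpoints strictly bracket those of $\fs_k$, and $l_\alpha<\fs_k$ on $[\fsl,\fsr]$ — hence $l_\alpha\lll\fs_k$, so $W(\fs_k)$ is convex. For the $V_k(\fs_k)$ part: the obstruction in the Figure \ref{fig:nonconvex} counterexample was precisely a function whose effective domain was a single point; here $g\lll\fs_k$ guarantees $g$ (and $h$) are finite on a genuine neighbourhood of $[\fsl,\fsr]$, so convex combinations cannot collapse the domain. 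On that region $\epi(l_\alpha)$ is squeezed between $\epi(g)$ and $\epi(h)$ in the vertical direction, and a direct estimate shows the box-distance functions $d(x,\epi(l_\alpha))$ stay within $1/k$ of $d(x,\epi(\fs_k))$ for $x\in\Bc_k$, using that both $g$ and $h$ already satisfy this bound and that the relevant part of $\epi(\fs_k)$ lies over $[\fsl-\text{something},\fsr+\text{something}]$ well inside where $g,h$ are finite. I expect this vertical-squeezing estimate for $V_k$ to be the main technical obstacle — one must be careful that the nearest point of $\epi(\fs_k)$ to a given $x\in\Bc_k$ is attained over an abscissa where all of $g$, $h$, $l_\alpha$ are finite, which is exactly what the $2k+2$ radius in \eqref{eq:fB} and the choice of $\epsilon$ buy us.

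Finally, for locality, fix a basic neighbourhood $V_m(f)$ of $f$. I claim $A_k(f)\subset V_m(f)$ for all sufficiently large $k$. Any $g\in A_k(f)$ satisfies $\sup_{x\in\Bc_k}|d(x,\epi(g))-d(x,\epi(\fs_k))|<1/k$, and separately $\sup_{x\in\Bc_k}|d(x,\epi(\fs_k))-d(x,\epi(f))|\to 0$ as $k\to\infty$ because $\fs_k\to f$ in $\taw$ (the shift is $1/(2k)$ and the curtailment $\epsilon=\epsilon(k)$ can be taken to shrink the domain by an arbitrarily small amount while still satisfying \eqref{eq:overhangs}, so on any fixed ball $\epi(\fs_k)$ converges to $\epi(f)$ in bounded-Hausdorff distance). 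Hence for $k\ge m$ large enough the triangle inequality gives $\sup_{x\in\Bc_m}|d(x,\epi(g))-d(x,\epi(f))|<1/m$, i.e.\ $g\in V_m(f)$. Since $\{V_m(f)\}$ is itself a local base, this shows $\{A_k(f)\}$ is a local base, completing the proof that it is a local convex base at $f$.
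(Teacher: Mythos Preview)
Your overall structure---checking $f\in A_k(f)$, openness, convexity, and locality---matches the paper's four-item breakdown, and your arguments for membership, openness (though sketchy for $W(\fs_k)$), and locality are essentially correct. The paper in fact proves the explicit containment $A_{2k}(f)\subset V_k(f)$ by the same triangle-inequality mechanism you describe.

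The gap is in your convexity argument for the $V_k(\fs_k)$ half, and it stems from missing the reason $W(\fs_k)$ was introduced in the first place. You propose a ``vertical-squeezing estimate'' and worry about whether the nearest point of $\epi(\fs_k)$ to a given $x\in\Bc_k$ lands over an abscissa where $g,h,l_\alpha$ are finite, crediting the $2k+2$ radius and the choice of $\epsilon$ for this. That is a misattribution: those parameters are used only to show $f\in V_k(\fs_k)$ (item~1), not for convexity. The actual argument is one line and uses no geometry of nearest points. Since $g,h,l_\alpha\in W(\fs_k)$, all three epigraphs contain $\epi(\fs_k)$, so the absolute values drop:
\[
|d(x,\epi(\fs_k))-d(x,\epi(l_\alpha))| = d(x,\epi(\fs_k))-d(x,\epi(l_\alpha)).
\]
Now $l_\alpha\geq\min(g,h)$ gives $\epi(l_\alpha)\subset\epi(g)\cup\epi(h)$, hence $d(x,\epi(l_\alpha))\geq\min\{d(x,\epi(g)),d(x,\epi(h))\}$, and therefore
\[
d(x,\epi(\fs_k))-d(x,\epi(l_\alpha)) \leq \max\bigl\{|d(x,\epi(\fs_k))-d(x,\epi(g))|,\ |d(x,\epi(\fs_k))-d(x,\epi(h))|\bigr\}<\tfrac{1}{k}.
\]
This one-sidedness is precisely what intersecting with $W(\fs_k)$ buys, and it is what defeats the Figure~\ref{fig:nonconvex} counterexample---not any control over where nearest points are attained. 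Your proposed route, as stated, does not close: without the sign information, a bound on $d(x,\epi(l_\alpha))$ sandwiched between the $g$ and $h$ distances does not by itself control $|d(x,\epi(\fs_k))-d(x,\epi(l_\alpha))|$.
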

\begin{proof}
See Section \ref{sec:proofs}.  
\end{proof}

\subsection{Coincidence of the deviation functions, exponential tightness and the LDP}

Using the new base, we can prove the following result, following
the super-additivity method of Ruelle and Lanford, to establish
Cram\'er's Theorem.

\begin{proposition}[Super-additivity]
\label{prop:super}
If $f$ is such that $f(0)=1$
and $\effdomf\neq\{0\}$ then for each $k\in\N$,
\begin{align*}
\mover(A_k(f))=\munder(A_k(f)).
\end{align*}
That is, equation \eqref{eq:RLf} holds. If $f(0)=1$ and $f(\theta)=+\infty$ for $\theta\neq 0$,
\begin{align*}
\mover(V_k(f))=\munder(V_k(f)).
\end{align*}
\end{proposition}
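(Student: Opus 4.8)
The plan is to follow the classical Ruelle--Lanford super-additivity argument, adapted to the non-convex-base difficulty that the preceding subsection has carefully engineered around. The key observation is that super-additivity holds not because the base sets are convex, but because a product of two estimators $\estM^{(1)}$ and $\estM^{(2)}$ (on disjoint blocks of the sample) that each lie in the relevant base set, when averaged, still lies in that base set --- or in a slightly enlarged one. Concretely, suppose $X_1,\dots,X_m$ produce an estimator $\estM$ and $X_{m+1},\dots,X_{m+\ell}$ produce $M_\ell'$; then $M_{m+\ell}$ is the convex combination $\frac{m}{m+\ell}\estM + \frac{\ell}{m+\ell}M_\ell'$. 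So the subadditivity inequality $\munder$-type bound
\begin{align*}
P(M_{m+\ell}\in A) \;\geq\; P(\estM\in A)\,P(M_\ell'\in A)
\end{align*}
will follow from the set $A$ being closed under such convex combinations \emph{restricted to $\BSpace$}, i.e. closed under mixing of MGFs of compactly supported measures. The two statements in Proposition~\ref{prop:super} correspond to the two base constructions: for $f(0)=1$ with nondegenerate effective domain we use $A_k(f)=V_k(\fs_k)\cap W(\fs_k)$, which Proposition~\ref{prop:Abase} tells us is a genuine local convex base; for the degenerate function $f(0)=1$, $f\equiv\infty$ off $0$, we use the non-convex $V_k(f)$ but exploit that $V_k(f)\cap\BSpace$ is convex (as flagged in the text immediately before the proposition).

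First I would record the general principle: if $\{\mathcal{A}_n\}$ is a family of events defined by $\{M_n\in A\}$ for a fixed set $A\subseteq\MSpacebig$ such that $A\cap\BSpace$ is convex, then the sequence $a_n:=\log P(M_n\in A)$ is super-additive, $a_{m+\ell}\geq a_m+a_\ell$, by the block decomposition above together with independence of the two blocks and the fact that a convex combination of two elements of $A\cap\BSpace$ lies in $A\cap\BSpace$ (all realized estimators being in $\BSpace$ with probability one by the remark following \eqref{def:bspace}). Fekete's lemma then gives $\lim_n \frac1n a_n = \sup_n \frac1n a_n$, so $\mover(A)=\munder(A)$. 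Thus the whole proposition reduces to the single combinatorial fact: \emph{$V_k(f)\cap\BSpace$ is convex when $f(0)=1$ and $f\equiv\infty$ off the origin, and $A_k(f)\cap\BSpace$ is convex for the general $f$}.

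For the degenerate case, $V_k(f)$ with $f=\infty$ off $0$ consists of functions $g\in\MSpacebig$ whose epigraph is uniformly close (within $\Bc_k$, to within $1/k$) to the vertical half-line $\{0\}\times[0,\infty)$. The obstruction exhibited in Figure~\ref{fig:nonconvex} was exactly that two such $g,h$ could have disjoint effective domains on opposite sides of $0$, so their average sits \emph{above} the floor $g(0),h(0)\to$ something, pushing $d(x,\epi(\cdot))$ up. But restricting to $\BSpace$ forces $g(0)=h(0)=1$ and, more usefully, forces the distance from any $x\in\Bc_k$ to the epigraph to be controlled by the values of $g$ on a small neighbourhood of where the segment $[x,\epi(g)]$ meets the graph; I would show that for MGFs of compactly supported measures the relevant distance function is, on $\Bc_k$, a convex functional of the measure (it is essentially governed by $\log$ of the MGF near $0$, which is convex), so the average stays in $V_k(f)$. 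For the non-degenerate case I would instead lean on Proposition~\ref{prop:Abase}: $A_k(f)$ is a \emph{convex} set in $\MSpacebig$, hence $A_k(f)\cap\BSpace$ is trivially convex, and super-additivity is immediate.

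The main obstacle is the degenerate-case convexity of $V_k(f)\cap\BSpace$: one must verify carefully that the pathology of Figure~\ref{fig:nonconvex} genuinely cannot occur once both functions are honest MGFs of compactly supported laws with $g(0)=h(0)=1$. The subtlety is that $V_k$ is defined through the \emph{Attouch--Wets} (epigraph-Hausdorff) distance, not pointwise values, so one cannot simply say "the average of two functions close to the $y$-axis is close to the $y$-axis" --- that is false in general, as the counterexample shows. The resolution should be that for $g=f_\nu$ with $\nu$ compactly supported, $d((\theta,b),\epi(g))$ for $(\theta,b)\in\Bc_k$ with $\theta\neq 0$ is bounded below by roughly $|\theta|$ minus a term controlled by how fast $g$ grows, and that growth is convex in the measure; combined with $g(0)=1$ this keeps the mixture inside $V_k(f)$. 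I expect this verification, handling the finitely many geometric cases for where $x$ sits relative to the (possibly one-sided) effective domains, to be the technical heart of the argument, with everything else --- the block decomposition, independence, and Fekete --- being routine.
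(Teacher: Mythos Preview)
Your overall framework --- block decomposition, independence, convexity of the relevant set intersected with $\BSpace$, then Fekete --- is exactly right, and your treatment of the nondegenerate case (lean on Proposition~\ref{prop:Abase} to get $A_k(f)$ convex, done) matches the paper.

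Where you go astray is the degenerate case. You correctly identify that the goal is to show $V_k(f)\cap\BSpace$ is convex, and you correctly note that restricting to $\BSpace$ forces $g(0)=h(0)=1$. But you then propose a vague argument about the Attouch--Wets distance being ``a convex functional of the measure'' governed by ``$\log$ of the MGF near $0$'', and anticipate a geometric case analysis as ``the technical heart of the argument''. None of that is needed, and it is not clear your proposed route would even work: the distance $d(x,\epi(f_\nu))$ is not in any obvious sense convex in $\nu$, and the counterexample of Figure~\ref{fig:nonconvex} shows the general obstruction is real.

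The point you are missing is that for this \emph{specific} $f$ --- the function with $f(0)=1$ and $f\equiv+\infty$ elsewhere --- the epigraph $\epi(f)=\{0\}\times[1,\infty)$ is contained in $\epi(g)$ for \emph{every} $g\in\BSpace$, simply because $g(0)=1$. That containment is the whole game: it means $d(x,\epi(f))\geq d(x,\epi(g))$, $d(x,\epi(h))$, $d(x,\epi(l_\alpha))$ for all $x$, so the absolute value in the definition of $V_k(f)$ opens with a definite sign. Combined with $\epi(l_\alpha)\subset\epi(g)\cup\epi(h)$ (since $l_\alpha\geq\min(g,h)$), you get
\[
d(x,\epi(f))-d(x,\epi(l_\alpha))\;\leq\;\max\bigl\{d(x,\epi(f))-d(x,\epi(g)),\,d(x,\epi(f))-d(x,\epi(h))\bigr\},
\]
and taking the supremum over $x\in\Bc_k$ finishes it. This is a three-line argument, and it is literally the same trick used in item~\ref{Abaseiii} of the proof of Proposition~\ref{prop:Abase}, with $f$ playing the role of $\fs_k$; the restriction to $\BSpace$ is what supplies the containment $\epi(f)\subset\epi(g)$ that $W(\fs_k)$ supplied there.
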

\begin{proof}
See Section \ref{sec:proofs}.
\end{proof}

In our setting, exponential tightness for
$\{\estM\}$ will prove to be near automatic due to the following
proposition.

\begin{proposition}[Compactness]
\label{prop:Mcomp}
The set $\{f\in\MSpacebig:f(0)\leq 1\}$ is compact.
\end{proposition}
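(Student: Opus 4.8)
The plan is to show that $K=\{f\in\MSpacebig:f(0)\leq 1\}$ is compact by identifying it with a closed subset of a compact space of closed subsets of $\R^2$ under the Attouch-Wets topology. Recall that $\taw$ on lower-semicontinuous functions is, by construction, the topology induced by identifying each $f$ with $\epi(f)\subset\R^2$ and using the bounded-Hausdorff (Attouch-Wets) topology on the hyperspace of nonempty closed subsets of $\R^2$. It is a classical fact (see Beer \cite{Beer93}) that the hyperspace of all nonempty closed subsets of a fixed closed set in a nice metric space is itself compact (indeed Polish) under the Attouch-Wets topology, provided we allow the empty set or restrict appropriately; more precisely, the collection of all closed subsets of $\R^2$ forms a compact space under $\taw$ once one adjoins the empty set. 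So the first step is to record this hyperspace-compactness statement and reduce the proposition to showing that the image $\{\epi(f):f\in K\}$ is \emph{closed} in that compact hyperspace.

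Second, I would verify closedness via sequential arguments, which suffice since $\taw$ is metrizable on our spaces. Suppose $f_n\in K$ and $\epi(f_n)\to C$ in $\taw$, where $C$ is some closed subset of $\R^2$. I must check three things: (i) $C$ is the epigraph of a lower-semicontinuous convex function $f$, i.e.\ $C$ is nonempty, closed, convex, contains no vertical lines going downward improperly, and is ``upward closed'' in the second coordinate; (ii) $f(0)$ is finite; and (iii) $f(0)\leq 1$. For (i): convexity of each $\epi(f_n)$ passes to the limit because Attouch-Wets limits of convex sets are convex (the distance functions $d(\cdot,\epi(f_n))$ converge uniformly on bounded sets, and a uniform limit of a sequence of convex-set distance functions has the property that its zero set is convex); upward-closedness likewise passes to the limit since it is a property preserved under this convergence; and $C$ is nonempty because the points $(0,1)$ lie in $\epi(f_n)$ for all $n$ (as $f_n(0)\le1$) and $(0,b)\in\epi(f_n)$ for all $b\ge f_n(0)$, so $d((0,2),\epi(f_n))\le 1$ uniformly, forcing the limit set to meet a bounded ball. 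For (ii) and (iii): the key observation is that $f_n(0)\le1$ means $(0,1)\in\epi(f_n)$ for every $n$, and also there are no points of $\epi(f_n)$ with first coordinate $0$ and second coordinate below $f_n(0)$; passing to the limit, $d((0,1),\epi(f_n))=0$ for all $n$ gives $d((0,1),C)=0$, hence $(0,1)\in C$ since $C$ is closed, which yields $f(0)\le 1$, in particular $f(0)$ finite so $f\in\MSpacebig$ and $f\in K$. Finally one must confirm $C=\epi(f)$ rather than a strictly larger closed convex upward-closed set, but since such a set is automatically the epigraph of its lower boundary function (its ``$\inf$'' in the vertical direction), which is lsc and convex, this is automatic.

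The main obstacle I anticipate is handling the case where the effective domain degenerates or runs off to infinity, and more generally making precise why the Attouch-Wets limit of epigraphs of convex lsc functions is again the epigraph of such a function without any pathology (e.g.\ the limit set being all of $\R^2$, or a half-plane, or empty). The empty-set case is excluded by the uniform bound on $d((0,2),\epi(f_n))$ noted above. The ``limit is everything'' or ``limit contains a full vertical line downward'' pathologies must be ruled out using that $(0,1)\in\epi(f_n)$ with nothing below $f_n(0)\le 1$ at $\theta=0$: a set-convergence argument shows $C$ cannot contain points $(0,b)$ with $b$ arbitrarily negative, because $d((0,-t),\epi(f_n))\ge t-1$ for all $n$ and all $t>1$, so this distance blows up and $(0,-t)\notin C$. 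This pins down the behaviour of $C$ along the vertical fiber over $0$ and, combined with convexity, controls it globally. The remaining verifications — convexity and upward-closedness of $C$, and that these characterize epigraphs — are routine facts about the bounded-Hausdorff topology that I would cite from \cite{Beer93} rather than reprove. Thus the proposition reduces to: hyperspace compactness (cited) $+$ closedness of $K$'s image (the sequential check above), the only delicate point being the vertical-fiber control at $\theta=0$ that simultaneously gives nonemptiness, properness, and the constraint $f(0)\le1$.
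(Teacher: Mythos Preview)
Your proposal is correct and follows essentially the same route as the paper: reduce to the compactness of the hyperspace of closed subsets of $\R^2$ in $\taw$ (the paper cites Beer for this, noting the Fell and Attouch-Wets topologies coincide on $\R^2$), then verify sequentially that the image of $K$ is closed by checking that the $\taw$-limit $C$ of epigraphs is again the epigraph of a function in $K$. The paper's checklist is (i) lsc, (ii) $f\ge 0$, (iii) $(0,1)\in\epi(f)$, (iv) convexity; yours is the same in spirit, and your argument that $(0,1)\in C$ via $d((0,1),\epi(f_n))=0$ is exactly the paper's step (iii).

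One small omission: you never explicitly verify that the limit function is \emph{non-negative}, which is part of the definition of $\MSpacebig$ (codomain $[0,\infty]$). Your vertical-fiber argument at $\theta=0$, together with convexity and closedness, correctly rules out $f$ taking the value $-\infty$ anywhere, but it does not by itself force $f\ge 0$. The paper handles this directly: since each $f_n\ge 0$, one has $d((x,y),\epi(f_n))\ge|y|$ whenever $y<0$, uniformly in $n$, so $(x,y)\notin C$ for $y<0$. This is the cleanest way to close the gap, and it also renders your more indirect ``convexity controls it globally'' step unnecessary.
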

\begin{proof}
See Section \ref{sec:proofs}.
\end{proof}
As $P(\estM(0)>1)=0$, exponential tightness is immediate. 

A combination of these results leads us to the LDP for $\{\estM\}$,
albeit without a good characterization of the rate function.

\begin{theorem}[LDP for MGF estimators]
\label{thm:MLDPnonchar}
The sequence of empirical MGF estimates, $\{\estM\}$, satisfies the LDP in 
$\MSpacesmall=\overline{\BSpace}\cap\{f:f(0)=1\}\subseteq\MSpacebig$ equipped
with $\taw$ and rate function $\IM:\MSpacesmall\mapsto[0,\infty]$,
\begin{align*}
\IM(f) =-\inf_k \mover(A_k(f))=-\inf_k \munder(A_k(f)).
\end{align*}
\end{theorem}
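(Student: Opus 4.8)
The plan is to assemble the LDP for $\{\estM\}$ from the pieces already in hand: the reduction of the relevant state space to $\overline{\BSpace}\cap\{f:f(0)=1\}$, the coincidence of deviation functions on a local convex base, the compactness of sublevel sets, and the standard Ruelle--Lanford machinery for passing from a local-base statement to a full weak LDP and then to a full LDP. First I would record that, since $P(\estM\in\BSpace)=1$ for every $n$, any open $G$ disjoint from $\overline{\BSpace}$ has $P(\estM\in G)=0$, so $\mover(G)=\munder(G)=-\infty$ there; hence \eqref{eq:weakLDP} holds trivially on the complement of $\overline{\BSpace}$. Combined with Proposition \ref{prop:Lessthan1}, which gives $\inf_{G\ni f}\mover(G)=-\infty$ whenever $f(0)\neq 1$, the only functions where something must be proved are those in $\overline{\BSpace}\cap\{f:f(0)=1\}=\MSpacesmall$. (That this intersection equals $\MSpacesmall$ uses Proposition \ref{prop:limitsprop}: Type-2 limits are excluded by $f(0)=1$, leaving exactly the MGFs and the functions mimicking an MGF.)

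Next I would establish the weak (vague) LDP on $\MSpacesmall$. For $f$ with $f(0)=1$ and $\effdomf\neq\{0\}$, Proposition \ref{prop:Abase} says $\{A_k(f)\}$ is a local convex base at $f$ and Proposition \ref{prop:super} gives $\mover(A_k(f))=\munder(A_k(f))$ for every $k$; since the $A_k(f)$ are decreasing (or cofinal in a decreasing family) the common value $-\inf_k\mover(A_k(f))=-\inf_k\munder(A_k(f))$ is well-defined, so the upper and lower deviation functions coincide and we may set $\IM(f)$ equal to this value. For the single remaining function $f$ with $f(0)=1$, $f(\theta)=+\infty$ for $\theta\neq 0$, Proposition \ref{prop:super} gives the analogous coincidence on the standard base $\{V_k(f)\}$, so $\IM(f)$ is again well-defined by the same formula (with $V_k$ in place of $A_k$, but one checks the two prescriptions agree in the overlap, or simply treats this as the degenerate case). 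This is precisely the hypothesis under which the Ruelle--Lanford/super-additivity argument, as in \cite{Lewis95,Dembo98}, yields the weak LDP: the lower bound $\liminf \frac1n\log P(\estM\in G)\geq -\inf_{f\in G}\IM(f)$ for open $G$, and the upper bound $\limsup\frac1n\log P(\estM\in K)\leq -\inf_{f\in K}\IM(f)$ for compact $K$, with $\IM$ lower semicontinuous.

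Finally I would upgrade the weak LDP to the full LDP. Proposition \ref{prop:Mcomp} states $\{f\in\MSpacebig:f(0)\leq 1\}$ is compact; since $P(\estM(0)=1)=1$ for all $n$, the sequence $\{\estM\}$ is supported on this single compact set, so exponential tightness is immediate (take the constant sequence of compact sets equal to $\{f:f(0)\leq1\}$, whose complement has probability $0$). Exponential tightness together with the weak LDP gives the full LDP with the upper bound over all closed sets and a good (compact-sublevel-set) rate function --- this is the standard implication, e.g. \cite[Lemma 1.2.18]{Dembo98}. Restricting attention to $\MSpacesmall$, which carries full mass and is closed in the relevant sense, yields the statement of Theorem \ref{thm:MLDPnonchar} with $\IM(f)=-\inf_k\mover(A_k(f))=-\inf_k\munder(A_k(f))$.

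The main obstacle is not in this assembly --- which is routine once the ingredients are granted --- but is hidden inside the cited Propositions, above all Proposition \ref{prop:super}: the super-additivity argument is delicate precisely because the usual base $\{V_k(f)\}$ fails to be convex (as the counterexample with $g,h,l_{1/2}$ shows), so one must verify that the engineered sets $A_k(f)=V_k(\fs_k)\cap W(\fs_k)$ are genuinely convex, form a local base, and are compatible with the sub-multiplicativity of the events $\{\estM\in A_k(f)\}$ under splitting the sample $X_1,\dots,X_{m+n}$ into two blocks. The one subtlety I would be careful about at the assembly stage is checking that the two formulas for $\IM$ --- one via $\{A_k(f)\}$ for $f$ finite on an open interval, one via $\{V_k(f)\}$ for the degenerate $f$ supported at $0$ --- together define a single lower semicontinuous function on all of $\MSpacesmall$, and that the weak-LDP bounds obtained base-point by base-point patch into the global open/compact bounds; this is where one invokes that a rate function is determined by its values and that the local bounds over a base suffice.
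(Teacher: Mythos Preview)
Your proposal is correct and follows essentially the same route as the paper: identify $\MSpacesmall=\overline{\BSpace}\cap\{f:f(0)=1\}$ via Proposition~\ref{prop:limitsprop}, use Propositions~\ref{prop:Lessthan1} and~\ref{prop:super} for coincidence of the deviation functions, invoke Proposition~\ref{prop:Mcomp} for exponential tightness, and then apply \cite[Lemma~1.2.18]{Dembo98} to obtain the full LDP before restricting to $\MSpacesmall$. The paper's own proof is terser---it simply cites these propositions and the Dembo--Zeitouni lemma, and for the final restriction it appeals to \cite[Lemma~4.1.5(b)]{Dembo98} (using that the effective domain of $\IM$ lies in $\MSpacesmall$) rather than arguing directly that $\MSpacesmall$ carries full mass---but the logical structure is the same.
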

\begin{proof}
See Section \ref{sec:proofs}.
\end{proof}

Although as yet we do not have a good characterisation of $\IM$,
from this result proving the LDP for the MLEs of the MGF as random
functions in the Attouch-Wets topology, we can establish the LDP
for the CGF estimates and the rate function estimates via somewhat
involved applications of the contraction principle. We use the
contraction principle with the map $\l$ defined in (\ref{def:l}) to prove the LDP for
the CGF estimators, $\{\estLambda\}$.

\begin{lemma}[Continuity of $\l$]
\label{lem:Lcont}
The functional $\l$, defined in (\ref{def:l}) is continuous.
\end{lemma}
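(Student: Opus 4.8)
The plan is to prove that $\l:\MSpacesmall\to\LSpace$, given by $\l(f)(\theta)=\log f(\theta)$, is continuous with respect to $\taw$ on both sides by working directly with epigraphs and the local base $\{V_k(\cdot)\}$ from equation \eqref{eq:beerbase}. Since $\MSpacesmall$ is a subspace of $\MSpacebig$ and every $f\in\MSpacesmall$ satisfies $f(0)=1>0$ (so that $\l(f)(0)=0$ is finite) and $f$ is finite only where it is strictly positive, the transformation $\log$ is applied to a function taking values in $(0,\infty]$; it sends $\epi(f)$ to $\epi(\l(f))$ via the map $\Phi(\theta,b)=(\theta,\log b)$ acting on the part of the plane with $b>0$, extended appropriately on the boundary. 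The key point is that $\Phi$ is a homeomorphism of $\R\times(0,\infty)$ onto $\R\times\R$, but it is \emph{not} uniformly continuous, so convergence of epigraphs on a fixed bounded box $\Bc_k$ does not transfer directly; one must exploit that the relevant functions are convex MGFs with $f(0)=1$, which gives quantitative control near $b=0$.

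First I would fix $f\in\MSpacesmall$ and $k\in\N$ and seek $j$ (depending on $f$ and $k$) such that $g\in V_j(f)$ implies $\l(g)\in V_k(\l(f))$. The essential geometric facts are: (i) on the bounded box $\Bc_k$ in the target, the distance $d(x,\epi(\l(g)))$ is governed only by the part of $\epi(\l(g))$ lying in a slightly larger box, hence only by the part of $\epi(g)$ with ordinate in $[e^{-k-1},e^{k+1}]$, a region on which $\log$ is bi-Lipschitz with constants depending only on $k$; (ii) convexity of $g$ together with $g(0)=1$ forces the effective domain of $g$, and the region where $g\le e^{k+1}$, to be controlled once $g$ is close to $f$ in $V_j(f)$ for $j$ large — this is where I would invoke the structure established in Section \ref{MGFEst}, in particular that the possible limits (Proposition \ref{prop:limitsprop}) and the base $A_k$ behave well. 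Using (i) and (ii), a $\taw$-$1/j$-closeness of $g$ to $f$ on $\Bc_j$ translates, via the bi-Lipschitz bound on the relevant vertical strip, into $\taw$-$1/k$-closeness of $\l(g)$ to $\l(f)$ on $\Bc_k$. The reverse direction — continuity of $\l^{-1}=\exp$ — is the easier half, since $\exp$ is locally Lipschitz on every bounded box and maps $\epi$ to $\epi$, so a direct $\epsilon$–$\delta$ estimate on a fixed enlarged box suffices.

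The main obstacle I anticipate is handling functions whose effective domain degenerates, i.e. $f$ with $f(0)=1$ and $f(\theta)=\infty$ for $\theta\neq0$, and more generally the interplay between closeness of epigraphs and closeness of effective domains near points where $g$ blows up: there $\log g$ also blows up, but the rate at which $\epi(g)$ approaches a vertical wall need not match that of $\epi(\l(g))$ unless one uses convexity. I would dispatch the degenerate case separately by a direct computation (its epigraph is a vertical ray, and $\log$ maps the ray $\{0\}\times[1,\infty)$ to $\{0\}\times[0,\infty)$, with neighbourhoods comparing cleanly), and for the generic case I would lean on the fact that for a convex function with value $1$ at $0$, the level set $\{g\le M\}$ is an interval whose endpoints depend continuously (in the $\taw$ sense, on bounded boxes) on $g$, so the ``walls'' of $\epi(g)$ and $\epi(\l(g))$ move together. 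Once these pieces are in place, continuity of $\l$ follows by the standard criterion that preimages of the basic neighbourhoods $V_k(\l(f))$ contain a basic neighbourhood of $f$, and symmetrically for $\l^{-1}$, giving the homeomorphism asserted in the discussion preceding Theorem \ref{thm:MLDP}.
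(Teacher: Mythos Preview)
Your proposal is correct and follows essentially the same route as the paper. Both arguments identify the same obstacle --- $\log$ is not uniformly continuous near $0$, so a naive transfer of $\taw$-closeness fails --- and both resolve it by exploiting that every $f\in\MSpacesmall$ has $(0,1)\in\epi(f)$, which forces the relevant distances on any bounded box to be determined by the portion of the epigraph with ordinate bounded away from $0$, a region on which $\log$ is bi-Lipschitz.

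The execution differs slightly. The paper packages the argument by citing \cite{Duffy05b}[Proposition~3] applied to the restrictions $g_n=g|_{\R\times[-n,\infty)}$ (whose inverses \emph{are} uniformly continuous on bounded sets), then glues the truncations together via \cite{Beer93}[Exercise~7.4.1] and a direct localisation argument using the common point $(0,1)$. You propose to carry out the bi-Lipschitz estimate on the strip $[e^{-k-1},e^{k+1}]$ by hand, working directly with the base $\{V_k(\cdot)\}$. Your version is more self-contained; the paper's is shorter once the external references are granted. One remark: your point~(ii), invoking convexity to control effective domains and level sets, is not needed --- the paper's proof uses only the fixed point $(0,1)$, and the positions of the vertical ``walls'' of $\epi(g)$ are already pinned down by the $V_j$-condition itself without any appeal to convexity. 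Dropping that ingredient would streamline your argument.
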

\begin{proof}
See Section \ref{sec:proofs}.
\end{proof}

Using this continuity, we are in a position to prove the result
for the CGF estimates.

\begin{theorem}[LDP for CGF estimators]
\label{thm:LLDP}
The sequence of empirical cumulant generating function estimators,
$\{\estLambda\}$, satisfies the LDP in $\LSpace$ equipped with $\taw$ and
rate function
\begin{align*}
\IL(f) = \IM(\l^{-1}(f)) = \left\{\IM(g): g(\theta)=\exp(f(\theta))
        \text{ for all } \theta\in\R\right\}.
\end{align*}
\end{theorem}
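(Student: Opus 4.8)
The plan is to derive this LDP from Theorem \ref{thm:MLDPnonchar} by the contraction principle, using the map $\l:\MSpacesmall\to\LSpace$ defined in \eqref{def:l}. The ingredients needed for the contraction principle are: (i) $\{\estM\}$ satisfies the LDP in $\MSpacesmall$ equipped with $\taw$ and a good rate function $\IM$, which is exactly Theorem \ref{thm:MLDPnonchar}; (ii) $\l$ is continuous, which is Lemma \ref{lem:Lcont}; (iii) $\l$ is surjective onto $\LSpace$, so that the pushforward rate function is defined on all of $\LSpace$; and (iv) $\estLambda = \l(\estM)$, which holds since $\estLambda(\theta)=\log\estM(\theta)$ for all $\theta\in\R$ by definition, using the convention $\log\infty=\infty$. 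Given these, the contraction principle (\cite{Dembo98}) immediately yields that $\{\estLambda\}$ satisfies the LDP in $\LSpace$ with good rate function $\IL(f)=\inf\{\IM(g):\l(g)=f\}$.

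First I would verify (iv) carefully: $\estM$ is almost surely in $\BSpace\cap\{f:f(0)=1\}\subseteq\MSpacesmall$ (it is the MGF of the empirical distribution, which has compact support), so applying $\l$ pointwise gives $\l(\estM)(\theta)=\log\estM(\theta)=\estLambda(\theta)$, and $\l(\estM)\in\LSpace$ by the very definition of $\LSpace$ as the image of $\MSpacesmall$ under the logarithm. Next I would observe that $\l$ is a bijection from $\MSpacesmall$ onto $\LSpace$: injectivity is clear since $\theta\mapsto e^{f(\theta)}$ recovers $g$ from $f=\l(g)$ (with $e^{\infty}=\infty$), and surjectivity is definitional — every $f\in\LSpace$ is of the form $\log g$ for some $g\in\MSpacesmall$. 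Hence the fibre $\{g:\l(g)=f\}$ is the single point $\l^{-1}(f)$, and the pushforward rate function collapses to $\IL(f)=\IM(\l^{-1}(f))$, which is the stated formula (the set-builder expression $\{\IM(g):g(\theta)=\exp(f(\theta))\ \forall\theta\}$ being just a singleton). That $\IL$ is again a good (lower-semicontinuous, compact-level-set) rate function follows from the general fact that the contraction of a good rate function along a continuous map is good, once $\l$ is known continuous.

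The only real content beyond routine bookkeeping is the continuity of $\l$ in the Attouch-Wets topology, which is precisely Lemma \ref{lem:Lcont} and which I am allowed to assume here; the subtlety there is that $\log$ must be compatible with epigraphical convergence even when effective domains move and even though $\taw$-convergence is not pointwise — but that work is done upstream. So the main obstacle in the present proof is essentially notational: ensuring that the convention $\log\infty=\infty$ makes $\l$ a genuine bijection of the stated spaces (in particular that $\log$ does not create values in $[-\infty,0)$ that escape $\LSpace$, which it cannot, since $f(\theta)\geq f(0)=1>0$ is false in general — rather one must note that although $\estM(\theta)$ can be less than $1$, every element of $\MSpacesmall$ maps into $\LSpace$ by construction, so no escape occurs), and that $\l^{-1}$ is well-defined. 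With Lemma \ref{lem:Lcont} and Theorem \ref{thm:MLDPnonchar} in hand, the proof is a direct invocation of the contraction principle.
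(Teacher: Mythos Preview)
Your proposal is correct and follows exactly the paper's approach: apply the contraction principle to the LDP for $\{\estM\}$ from Theorem~\ref{thm:MLDPnonchar} through the continuous map $\l$ of Lemma~\ref{lem:Lcont}, noting that $\l$ is a bijection so the infimum in the pushforward rate function collapses to $\IM(\l^{-1}(f))$. The paper's own proof is the same one-line invocation; your extra bookkeeping (verifying $\estLambda=\l(\estM)$, bijectivity, and that $\LSpace$ is by definition the image of $\MSpacesmall$ under $\l$) is sound and just makes explicit what the paper leaves implicit.
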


\begin{proof}
See Section \ref{sec:proofs}.
\end{proof}

Similarly, we can prove the LDP for the Jarzynksy estimators, $\{\estJ\}$ by first 
establishing the continuity of the map $\j$ defined in (\ref{def:j}).

\begin{lemma}[Continuity of $\j$]
\label{lem:JCont}
The functional $\j$, defined in (\ref{def:j}) is continuous.
\end{lemma}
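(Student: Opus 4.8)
The plan is to show that $\j:\LSpace\to\JSpace$ is continuous at an arbitrary $f\in\LSpace$ by working directly with epigraphs and the local base $\{V_k(\cdot)\}$ of equation \eqref{eq:beerbase}. Fix $f\in\LSpace$ and recall that $f$ is a convex lower-semicontinuous function with $f(0)=0$, whose effective domain $\effdomf=[\fl,\fr]$ contains $0$. Write $g=\j(f)$. First I would dispose of the behaviour away from the origin: on any compact set $K\subset\{\theta\neq 0\}$ bounded away from $0$, the map $(\theta,b)\mapsto(\theta,b/\theta)$ is a bi-Lipschitz homeomorphism, and since $\taw$-convergence is equivalent to local uniform convergence of the distance functions $d(\cdot,\epi(f_n))$ on bounded sets, convergence $f_n\to f$ forces the transformed epigraphs to converge uniformly on $K$. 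The only subtlety there is that division by $\theta$ distorts the box metric, but on a set bounded away from $0$ this distortion is controlled by a fixed constant, so it only rescales the $\epsilon$'s.

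The crux is continuity at and near $\theta=0$. Here I would exploit two structural facts. First, every $f\in\LSpace$ satisfies $f(0)=0$ and is differentiable-from-each-side at $0$ with $f(\theta)=f'(0^{\pm})\theta+o(\theta)$ as $\theta\to 0^{\pm}$ (convexity plus finiteness on a neighbourhood of $0$ — this needs $0\in\operatorname{int}\effdomf$; the boundary case $\fl=0$ or $\fr=0$ is handled separately by the one-sided argument of the previous paragraph together with the $\liminf$ convention in \eqref{def:j}). Consequently $\j(f)(0)=\liminf_{\theta\to 0}f(\theta)/\theta$ picks out $\min(f'(0^-),f'(0^+))$ when $0$ is interior, and more generally the relevant one-sided derivative. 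Second, the epigraph of $g$ near the vertical line $\theta=0$ is, after this transformation, sandwiched between the images of $\epi(f)$ computed on $[-\delta,\delta]$, which shrink in a controlled way as $\delta\to 0$ because $f(\theta)/\theta$ stays bounded near $0$. The plan is therefore: given $k$, choose $\delta$ small enough that $\epi(g)\cap\Bc_k$ restricted to $|\theta|\le\delta$ lies within $1/(3k)$ (in the box metric) of the vertical segment through $(0,g(0))$; choose $n$ large enough (using $f_n\to f$ in $\taw$ on the box $\Bc_{2k+2}$, say) that $\epi(f_n)$ and $\epi(f)$ are within $\delta'$ on that box, where $\delta'$ is picked so the transformed sets stay within $1/(3k)$ on $\delta\le|\theta|\le k$; and check that $\j(f_n)(0)$ cannot then be more than $1/(3k)$ away from $g(0)$ — this last point uses that the $\liminf$ defining $\j(f_n)(0)$ is, up to the curtailment, controlled by values of $f_n$ on $[-\delta,\delta]$, which are $\delta'$-close to those of $f$.

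I expect the main obstacle to be exactly this handling of the point $\theta=0$, and in particular verifying that the possibly-non-lower-semicontinuous value $\j(f)(0)=\liminf_{\theta\to 0}f(\theta)/\theta$ varies continuously. The paper has already taken care of the bookkeeping by agreeing (in the discussion after \eqref{eq:topology}) to identify each $f\in\JSpace$ with the \emph{closure} of its epigraph, i.e.\ with the epigraph of $\overline{f}$; this is what makes the target space behave well. So the real content is: (i) $\taw$-convergence of $f_n$ to $f$ implies $\taw$-convergence of $\overline{\epi(\j(f_n))}$ to $\overline{\epi(\j(f))}$, which reduces to the epigraph-sandwiching estimate above, and (ii) the injectivity of $f\mapsto\overline{\epi(f)}$ on $\JSpace$, already noted in the excerpt, means no information is lost. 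Once the near-origin estimate and the away-from-origin estimate are combined over the finitely many scales needed to cover $\Bc_k$, one obtains: for every $k$ there is $N$ with $\j(f_n)\in V_k(\j(f))$ for $n>N$, which is precisely continuity of $\j$ in $\taw$.
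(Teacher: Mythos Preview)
Your strategy is genuinely different from the paper's and worth noting. The paper does not push epigraphs through the map $(\theta,b)\mapsto(\theta,b/\theta)$ at all. Instead it splits into three cases on $\effdomf$: (i) one-sided domain, where it uses that $\taw$-convergence gives pointwise convergence on the interior, hence pointwise convergence of $\j(f_n)$, upgrades this to local uniform convergence via convexity of $\j(f)$ on each half-line, and invokes Beer's Lemma~7.1.2; (ii) $0$ interior to $\effdomf$, where it restricts $f$ to $f^{-}$ and $f^{+}$ on the two half-lines, applies case~(i) to each, and reassembles $\epi(\j(f))$ as a union using Beer's Exercise~7.4.1; (iii) $\effdomf=\{0\}$, handled by a bare-hands verification of the excess criterion in Beer's Theorem~3.1.7. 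Your bi-Lipschitz-plus-derivative-control plan is more geometric and avoids the $f^{\pm}$ decomposition; the paper's route avoids any analysis of the map $b\mapsto b/\theta$ and leans entirely on existing convergence lemmas for convex functions.

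That said, there are two concrete gaps in your plan. First, for $\theta<0$ the map $(\theta,b)\mapsto(\theta,b/\theta)$ reverses inequalities: it sends $\epi(f)\cap\{\theta<0\}$ to the \emph{hypograph} of $\j(f)$ on $\{\theta<0\}$, not its epigraph. So your bi-Lipschitz step as written yields $\taw$-convergence of the wrong set on the left half-plane; you would need an extra argument (e.g.\ via graph convergence and continuity of $\j(f)$ on the interior) to pass from hypograph to epigraph convergence there. This is exactly the asymmetry the paper sidesteps by noting $\j(f)$ is convex on $(0,\infty)$ and concave on $(-\infty,0)$ and treating the two halves with separate appeals to Beer. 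Second, you do not cover the case $\effdomf=\{0\}$: there are no one-sided derivatives, the near-origin sandwiching collapses, and the away-from-origin argument is vacuous. The paper devotes a full separate argument to this case, and something comparable is needed in your plan as well.
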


\begin{proof}
See Section \ref{sec:proofs}.
\end{proof}

\begin{theorem}[LDP for Jarzynski estimators]
\label{thm:JLDP}
The sequence of empirical Jarzynski estimators, $\{\estJ\}$, satisfies the LDP in $\JSpace$ equipped with $\taw$ and rate function 
\begin{align*}
\IJ(f) = \IM(\l^{-1}(\j^{-1})(f)) = \{\IM(g): g(\theta)=\exp(\theta f(\theta)) 
		\text{ for all } \theta\neq 0, g(0)=1\}.
\end{align*}
\end{theorem}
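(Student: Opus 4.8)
The plan is to deduce this LDP from the already-established LDP for the CGF estimators, Theorem~\ref{thm:LLDP}, by applying the contraction principle along the Jarzynski operator $\j:\LSpace\to\JSpace$. Since $\estJ = \j(\estLambda)$ by the very definitions \eqref{eq:estJ} and \eqref{def:j}, and $\{\estLambda\}$ satisfies the LDP in $(\LSpace,\taw)$ with good rate function $\IL$, it suffices to verify the hypotheses of the contraction principle: that $\j$ is continuous from $(\LSpace,\taw)$ to $(\JSpace,\taw)$ and that $\j$ is surjective onto $\JSpace$ (the latter being essentially immediate from the definition of $\JSpace$, whose elements are by construction of the form $g(\theta)/\theta$ for $g\in\LSpace$ with the prescribed value at $0$). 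The continuity of $\j$ is exactly the content of Lemma~\ref{lem:JCont}, which we invoke. Granting these, the contraction principle \cite{Dembo98} yields that $\{\estJ\}$ satisfies the LDP in $(\JSpace,\taw)$ with good rate function
\begin{align*}
\IJ(f) = \inf\{\IL(g): g\in\LSpace,\ \j(g)=f\}.
\end{align*}

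The remaining work is to rewrite this infimum in the closed form stated in the theorem. First observe that $\j$ is injective on $\LSpace$: if $\j(g_1)=\j(g_2)$ then $g_1(\theta)=\theta\,\j(g_1)(\theta)=\theta\,\j(g_2)(\theta)=g_2(\theta)$ for all $\theta\neq 0$, and $g_1(0)=g_2(0)=\log 1=0$ since every element of $\LSpace$ is a log-MGF and hence vanishes at $0$; thus $g_1=g_2$. (This matches the remark after the definition of $\l,\j,\lf$ that these operators are bijective.) Consequently the infimum defining $\IJ$ is attained at the unique preimage $\j^{-1}(f)$, and $\IJ(f)=\IL(\j^{-1}(f))=\IM(\l^{-1}(\j^{-1}(f)))$, using the identification $\IL=\IM\circ\l^{-1}$ from Theorem~\ref{thm:LLDP}. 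Finally, unwinding $\l^{-1}$ and $\j^{-1}$ explicitly: given $f\in\JSpace$, the function $g=\j^{-1}(f)\in\LSpace$ satisfies $g(\theta)=\theta f(\theta)$ for $\theta\neq 0$ and $g(0)=0$, and then $\l^{-1}(g)\in\MSpacesmall$ is the function $\theta\mapsto\exp(g(\theta))$, i.e. $\exp(\theta f(\theta))$ for $\theta\neq 0$ and $\exp(0)=1$ at $\theta=0$. This gives precisely
\begin{align*}
\IJ(f) = \left\{\IM(g): g(\theta)=\exp(\theta f(\theta))\text{ for all }\theta\neq 0,\ g(0)=1\right\},
\end{align*}
as claimed.

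The only genuinely substantive input is Lemma~\ref{lem:JCont}, the continuity of $\j$ in the Attouch–Wets topology; the main obstacle there (already handled by that lemma, which we are entitled to assume) is the behaviour near $\theta=0$, where dividing by $\theta$ is delicate and where an element of $\JSpace$ may fail to be lower-semicontinuous — this is exactly why $\JSpace$ was defined with $f(0)=\liminf_{\theta\to 0}f(\theta)$ and why elements of $\JSpace$ are identified with the closure of their epigraph. Beyond that, every step above is bookkeeping: injectivity/surjectivity of $\j$, attainment of the infimum at a single point, and the algebraic identification of $\l^{-1}\circ\j^{-1}$. One should also note that goodness (compact level sets) of $\IJ$ is automatic from the contraction principle since $\IL$ is good and $\j$ is continuous, so no separate exponential tightness argument is needed in $\JSpace$.
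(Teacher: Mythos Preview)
Your proposal is correct and follows exactly the paper's approach: the paper's proof of Theorem~\ref{thm:JLDP} is a single line invoking the continuity of $\j$ (Lemma~\ref{lem:JCont}) together with the contraction principle applied to the LDP for $\{\estLambda\}$. Your additional bookkeeping on injectivity of $\j$ and the explicit unwinding of $\l^{-1}\circ\j^{-1}$ to obtain the stated closed form of $\IJ$ is a welcome elaboration of what the paper leaves implicit, but the strategy is identical.
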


\begin{proof}
See Section \ref{sec:proofs}.
\end{proof}

Having established the LDP for the CGF estimates in Theorem
\ref{thm:LLDP}, the rate function estimator result follows from
another application of the contraction principle in conjunction
with the continuity of
 the Legendre-Fenchel transform defined in
equation \eqref{def:lf}. Considering $\lf:\LSpace\mapsto\ISpace$,
the map $\lf$ is a homeomorphism \cite{Attouch86,Beer93} and, indeed,
this is in part what leads us to this topology in order to establish
these results; it correctly captures smoothness in convex conjugation.

\begin{theorem}[LDP for rate function estimators]
\label{thm:ILDP}
The sequence of empirical rate function estimators, $\{\estI\}$,
satisfies the LDP in $\ISpace$ equipped with $\taw$ and rate function
\begin{align*}
\II(f) = \IM(\l^{-1}(\lf^{-1}(f))) = \left\{\IM(g):
        g(\theta)=\exp\left(\sup_{x\in\R}\left(x\theta-f(x)\right)\right)
        \text{ for all } \theta\in\R\right\}.
\end{align*}
\end{theorem}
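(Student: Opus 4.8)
The plan is to invoke the contraction principle, exactly as the paper signals, using the Legendre--Fenchel transform $\lf:\LSpace\to\ISpace$ as the continuous (indeed homeomorphic) map that pushes forward the LDP for $\{\estLambda\}$ established in Theorem~\ref{thm:LLDP}. Recall that by definition $\estI = \lf(\estLambda)$, since $\estI(x)=\sup_{\theta\in\R}(\theta x - \estLambda(\theta))$ matches \eqref{def:lf} with $f=\estLambda$. The contraction principle (see \cite{Dembo98}) states that if $\{Y_n\}$ satisfies the LDP in a space $\Y$ with good rate function $\Iy$, and $T:\Y\to\Y'$ is continuous, then $\{T(Y_n)\}$ satisfies the LDP in $\Y'$ with good rate function $\Iy'(y')=\inf\{\Iy(y):T(y)=y'\}$. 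So the first step is simply to record that $\{\estLambda\}$ satisfies the LDP in $\LSpace$ with good rate function $\IL$ (Theorem~\ref{thm:LLDP}), and that $\lf$ is continuous on $\LSpace$ — this last fact is cited from \cite{Attouch86,Beer93} as Theorem~7.2.11 of Beer, asserting that Legendre--Fenchel conjugation is Attouch--Wets continuous (in fact a homeomorphism onto its image).

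Applying the contraction principle then yields immediately that $\{\estI\}$ satisfies the LDP in $\ISpace$ with good rate function
\begin{align*}
\II(f) = \inf\{\IL(g): g\in\LSpace,\ \lf(g)=f\}.
\end{align*}
The second step is to simplify this inf. Since $\lf$ restricted to $\LSpace$ is a bijection onto $\ISpace$ — this is the content of the remark after the definition of $\lf$, that $\lf$ is an involution on convex lower-semicontinuous functions by \cite{Rockafellar70}[Theorem 26.5], hence injective here — the infimum is over a singleton: there is a unique $g\in\LSpace$ with $\lf(g)=f$, namely $g=\lf^{-1}(f)$, which by involutivity is $g(\theta)=\sup_{x\in\R}(x\theta-f(x))$ for all $\theta\in\R$. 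Therefore $\II(f)=\IL(\lf^{-1}(f))$. The third step is to unwind $\IL$ using its characterization from Theorem~\ref{thm:LLDP}: $\IL(g)=\IM(\l^{-1}(g))$ where $\l^{-1}(g)(\theta)=\exp(g(\theta))$. Composing, $\II(f)=\IM(\l^{-1}(\lf^{-1}(f)))=\IM(g')$ where $g'(\theta)=\exp\big(\sup_{x\in\R}(x\theta-f(x))\big)$, which is precisely the stated formula; the set-builder notation $\{\IM(g):\ldots\}$ in the statement is just emphasizing that this $g'$ is the unique preimage.

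The main thing to verify carefully — and this is really the only non-routine point — is that the codomain in Theorem~\ref{thm:LLDP} and the domain for $\lf$ match up, i.e. that $\lf$ maps $\LSpace$ \emph{onto} $\ISpace$ and is a homeomorphism between them, so that the contracted LDP lives on $\ISpace$ and not on some larger space. By the definition of $\ISpace$, every $f\in\ISpace$ is of the form $\lf(g)$ for some $g\in\LSpace$, so surjectivity holds by construction; injectivity is involutivity; and bicontinuity is the cited homeomorphism property of Attouch--Wets conjugation on the cone of proper convex lower-semicontinuous functions. One should also note that goodness (compact level sets) of $\II$ is preserved: the continuous image of the good rate function $\IL$ under a continuous map automatically has compact level sets, as guaranteed by the contraction principle. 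With these points in place the proof is a two-line composition, and indeed the proof in Section~\ref{sec:proofs} will consist of exactly this: cite Theorem~\ref{thm:LLDP}, cite continuity/homeomorphy of $\lf$ from \cite{Beer93}, apply the contraction principle, and simplify the rate function using involutivity of $\lf$ and the formula for $\IL$.
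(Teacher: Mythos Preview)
Your proposal is correct and matches the paper's approach exactly: the paper's proof is the one-liner ``This result follows from the continuity of $\lf$ and the contraction principle,'' and you have simply unpacked that line, citing Theorem~\ref{thm:LLDP}, the Attouch--Wets continuity of $\lf$ from \cite{Beer93}, and the involutivity of $\lf$ to simplify the rate function.
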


\begin{proof}
See Section \ref{sec:proofs}.
\end{proof}

\section{Characterizing the Rate Function}\label{charsec} 

The super-additivity approach does not directly provide a useful
form for the resulting rate functions governing the LDPs. As $\l$
and $\lf$ are injective, to characterise $\II$ and $\IL$ it suffices
to characterise $\IM$. Based on the appearance of relative entropy
in the more restricted results in \cite{Duffy05}, one anticipates
it has a role to play here.

The approach we take to create a useful characterization relies
heavily on continuity and inverse continuity of mappings from subsets
of the set of MGFs to measures on $\R$. This
reliance on continuity suggests that there may be some version of
the contraction principle that could be applied directly to
prove the LDP. However, since the mapping from $\MSpacesmall$ to $\Measures$
is not well-defined (consider the function finite only at 0), and
the inverse mapping is not surjective (there are functions in the
effective domain of $\IM$ that are not mapped to by any measure)
this seems unlikely. 

Garcia's extension of the contraction principle, \cite{Garcia04}[Theorem
1.1], almost suffices if we consider Sanov's Theorem for empirical
measures in the weak toplogy \cite{Dembo98} and the map
taking measures to their MGFs. However, due to the possible
unboundedness of the support of those measures, that map is not
continuous for any $x\in\Measures$ and in any case $\IM$ will turn
out not to coincide with the rate function given in \cite{Garcia04};
if $f$ mimics $f_\nu$, we may get $\IM(f)=\infty$ even though, if
it were possible, an application of the contraction principle would
give a finite value. Moreover, there are conditions independent
of the continuity that characterise $\IM$, so it appears that an
alternate approach is necessary.

Propositions \ref{lemma:convex}, \ref{thm:I=H} and \ref{thm:Iformimics},
which follow, together provide the characterization in Theorem \ref{thm:MLDP}.

\subsection{Convexity of $\IM$}

In the process of proving that $\IM$ is convex, we must establish
that $\{(f,g):(f+g)/2\in G\}\subset\MSpacesmall\times\MSpacesmall$ is an
open set in the product topology for each open $G\in\MSpacesmall$. That
is, that averaging is a continuous operation in $\MSpacesmall$.
The proof of this result indicates why convex combinations are
not a continuous operation in $\MSpacebig$, hence the need to restrict
our LDP to $\MSpacesmall$ in Theorem \ref{thm:MLDPnonchar}. As an example, 
consider $f_n(\theta)=1+\theta n$ and $g_n(\theta)=1-n\theta$ for $|\theta|\leq 1/n$
and infinite otherwise,
converging to $f$ and $g$ finite only at 0, where they both equal 0. $(f_n+g_n)/2$ is equal
to 1 on $[-1/n,1/n]$ and infinite otherwise, so that it converges to $(f+g)/2+1$, 
not $(f+g)/2$.

We establish global convexity of $\IM$ by creating an argument along
the lines of \cite{Dembo98}[Lemma 4.1.21]. The conditions of that
Lemma as stated do not hold here as $\MSpacesmall$ is not a topological
vector space: we do not have additive inverses, closure under scalar
multiplication or addition, and nor is there a zero element.  The
proof of \cite{Dembo98}[Lemma 4.1.21], however, relies only on 
two deductive conclusions of those hypotheses, which we establish
directly in Section \ref{sec:proofs}.

\begin{proposition}[Convexity of $\IM$]
\label{lemma:convex}
The MGF rate function, $\IM$, is convex on its entire domain. 
\end{proposition}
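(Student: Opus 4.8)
The plan is to adapt the classical argument of \cite{Dembo98}[Lemma 4.1.21], which shows that a large-deviation rate function obtained as an inf-derivative of limsup-probabilities over a local \emph{convex} base is convex. As noted in the excerpt, that lemma is stated for topological vector spaces, a structure $\MSpacesmall$ lacks, but its proof only invokes two consequences of the vector-space hypotheses: (i) around any $f,g\in\MSpacesmall$ and any basic open sets $U\ni f$, $V\ni g$ there is a basic open set around $(f+g)/2$ contained in $\{(f'+g')/2 : f'\in U, g'\in V\}$ (continuity of averaging), and (ii) for the sequence $\{\estM\}$ one has a super-additivity/independence bound of the form $P(\estM\in\tfrac12(U+V))\gtrsim P(M_{\lceil n/2\rceil}\in U)\,P(M_{\lfloor n/2\rfloor}\in V)$ coming from the structure of the estimator. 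The proof will establish both facts directly for $\MSpacesmall$ and then run the standard computation.

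First I would prove the averaging-continuity statement, which is the technical heart. Fix $f,g\in\MSpacesmall$ and a basic open $G\ni (f+g)/2$; I want basic open $U\ni f$, $V\ni g$ with $\tfrac12(U+V)\subseteq G$. Using the convex bases already constructed — $A_k(\cdot)$ from Proposition~\ref{prop:Abase} when the relevant function is finite on a nonempty open interval, and $V_k(\cdot)\cap\BSpace$ for the function finite only at $0$ (Proposition~\ref{prop:super}) — one shows that for $k$ large, $A_{k'}(f')\, +\, A_{k'}(g')$, halved, lands inside $A_k(h)$ for the appropriate $k'$ depending on $k$, where $h$ is the relevant representative near $(f+g)/2$. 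The mechanism is that averaging two MGFs is again an MGF (the average of the two measures), that averaging is $1$-Lipschitz at the level of epigraphs in the relevant Attouch–Wets pseudometrics restricted to a bounded box once effective domains are controlled, and that the curtailment construction in the definition of $\fs_k$ and the strict-containment relation $\lll$ are stable under this averaging on $\MSpacesmall$. Crucially, the example in the excerpt (the functions $1+\theta n$ and $1-n\theta$) shows this \emph{fails} on $\MSpacebig$: the averages can create mass — $(f_n+g_n)/2$ tending to $(f+g)/2+1$ rather than $(f+g)/2$ — so the argument must genuinely use that all our functions are MGFs or mimic MGFs, i.e. that they carry the normalisation $f(0)=1$ and arise from genuine measures. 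I would isolate exactly where this is used.

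Next, with averaging-continuity in hand, the standard argument gives convexity at the dyadic midpoint: for any $f,g\in\MSpacesmall$ and any basic open $G\ni(f+g)/2$, choose $U\ni f$, $V\ni g$ basic open with $\tfrac12(U+V)\subseteq G$; then
\begin{align*}
\mover(G)\;\geq\; \limsup_{n\to\infty}\frac1n\log P\!\left(\estM\in\tfrac12(U+V)\right)\;\geq\;\tfrac12\,\munder(U)+\tfrac12\,\munder(V),
\end{align*}
using the independence/super-additivity bound together with Proposition~\ref{prop:super} (so that $\mover$ and $\munder$ coincide on basic sets, and in particular $\liminf$ equals $\limsup$ along the relevant subsequences of indices $n$). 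Taking the infimum over $G$, $U$, $V$ and using \eqref{eq:weakLDP} yields $\IM\!\left(\tfrac{f+g}{2}\right)\leq \tfrac12\IM(f)+\tfrac12\IM(g)$. Midpoint convexity plus lower semicontinuity of $\IM$ (which holds since $\IM$ is a good rate function by Theorem~\ref{thm:MLDPnonchar}) upgrades to full convexity on $\MSpacesmall$ by the usual dyadic-density argument, noting that $\MSpacesmall$ is midpoint-convex so all midpoints stay in the domain.

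The main obstacle I expect is step one: verifying that halving the sum of two basic convex neighbourhoods is contained in a basic convex neighbourhood of the midpoint, uniformly enough to be useful. The delicacy is entirely in how effective domains and their curtailments interact under averaging — the effective domain of $(f'+g')/2$ is the intersection $\effdom_{f'}\cap\effdom_{g'}$, which can be strictly smaller and can jump, so one must choose the curtailment parameter $\epsilon$ and the index $k'$ for $A_{k'}(f')$, $A_{k'}(g')$ carefully as functions of the target neighbourhood $A_k(h)$, and separately handle the boundary cases where $f$ or $g$ is finite only at $0$ (here one works with $V_k\cap\BSpace$ instead, and must check the averaged sequence still lies in $\BSpace$ and converges correctly). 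Getting these case distinctions to line up, while keeping everything inside $\MSpacesmall$ rather than slipping into $\MSpacebig$, is where the real work lies; the rest is the textbook computation.
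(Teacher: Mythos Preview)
Your overall strategy matches the paper's: adapt \cite{Dembo98}[Lemma 4.1.21] by directly verifying its two operative ingredients in $\MSpacesmall$. The independence bound and the passage to midpoint convexity are fine (minor quibble: work along even indices $2n$ so the weights are genuinely $1/2,1/2$; your $\lceil n/2\rceil,\lfloor n/2\rfloor$ version doesn't quite give a half--half split for odd $n$).

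There is, however, a genuine gap in your upgrade from midpoint convexity to full convexity. The ``usual dyadic-density argument'' you invoke proceeds by taking dyadic $\alpha_n\to\alpha$ and using lower semicontinuity of $\IM$ along the sequence $\alpha_n f+(1-\alpha_n)g$. For this to work you need $\alpha_n f+(1-\alpha_n)g\to\alpha f+(1-\alpha)g$ in $\taw$, i.e.\ continuity of $\beta\mapsto\beta f+(1-\beta)g$ on $(0,1)$. In a topological vector space this is automatic from continuity of scalar multiplication; in $\MSpacesmall$ it is not, and the paper proves it as an explicit \emph{second} step (via uniform convergence on compact subsets of $\effdomf\cap\effdomg$ plus a modification of \cite{Beer93}[Lemma 7.1.2]). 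You omit this entirely.

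On your first step, you and the paper are proving the same thing --- continuity of $(f,g)\mapsto(f+g)/2$ --- but your neighbourhood-containment formulation via the bases $A_k$ is considerably harder than necessary. The paper works sequentially (legitimate since $\MSpacesmall$ is metrizable), invokes \cite{Beer93}[Theorem 7.4.5] directly when one of $f,g$ is finite and continuous at a point of the other's domain, and handles the residual case (domains meeting only at $0$) by a short pointwise argument together with the compactness of $\{h:h(0)\leq1\}$ from Proposition~\ref{prop:Mcomp}. This sidesteps the delicate curtailment bookkeeping you anticipate; you should consider switching to the sequential formulation and citing Beer's addition theorem rather than reinventing it at the level of $A_k$.
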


\begin{proof}
See Section \ref{sec:proofs}.
\end{proof}

\subsection{Characterizing $\IM$ for MGFs}

\begin{proposition}[$\IM(f)$ for $f$ an MGF]
\label{thm:I=H}
For all moment generating functions $f_\nu$ finite on a non-empty open interval, 
\begin{align*}
\IM(f_\nu)=\H(\nu|\mu).
\end{align*}
Moreover, for $f\in\MSpacesmall$ finite only at 0, 
\begin{align*}
\IM(f)\leq\inf_{\nu:f=f_\nu}\H(\nu|\mu).
\end{align*} 
\end{proposition}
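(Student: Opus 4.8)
\textbf{Proof strategy for Proposition \ref{thm:I=H}.} The plan is to identify $\IM(f_\nu)$ with $\H(\nu|\mu)$ via a two-sided bound, exploiting the local convex base $\{A_k(f_\nu)\}$ from Proposition \ref{prop:Abase} together with the super-additivity result Proposition \ref{prop:super}, and then transferring known large deviation estimates for empirical measures (Sanov-type / Cram\'er-type bounds) through the map $\nu \mapsto f_\nu$. The key point is that for each fixed finite collection of arguments $\theta_1,\dots,\theta_m$, the vector $(\estM(\theta_1),\dots,\estM(\theta_m))$ is itself an empirical mean of the i.i.d.\ bounded-below vectors $(e^{\theta_j X_i})_j$, so a finite-dimensional Cram\'er theorem applies; the work is in passing from these finite-dimensional statements to the infinite-dimensional functional statement, which is exactly what the Attouch-Wets base $A_k$ is designed to control.

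\textbf{Upper bound $\IM(f_\nu)\le \H(\nu|\mu)$.} Here I would proceed directly at the level of the deviation functions. Fix $k$ and the base set $A_k(f_\nu) = V_k(\fs_k)\cap W(\fs_k)$. Since $A_k(f_\nu)$ is a neighbourhood of $f_\nu$ in $\taw$, and $\taw$-convergence restricted to a compact $\theta$-interval on which the limit is finite forces uniform convergence there, one can find $\delta>0$ and finitely many points $\theta_1,\dots,\theta_m$ in $[\fBl+\epsilon,\fBr-\epsilon]$ such that $\{g : |g(\theta_j)-f_\nu(\theta_j)|<\delta \ \forall j,\ \text{and } g \text{ finite only on a suitable interval}\} \subset A_k(f_\nu)$. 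Then $P(\estM\in A_k(f_\nu))$ is bounded below by the probability that the empirical means $\estM(\theta_j)$ are all within $\delta$ of $\nu(\exp(\theta_j\cdot))$. If $\H(\nu|\mu)<\infty$, a change of measure (tilting toward $\nu$) combined with the weak law under $\nu$ gives $\liminf \tfrac1n\log P(\cdots) \ge -\H(\nu|\mu)$, whence $\munder(A_k(f_\nu))\ge -\H(\nu|\mu)$ for every $k$, and so $\IM(f_\nu) = -\inf_k \munder(A_k(f_\nu)) \le \H(\nu|\mu)$. The "finite only at $0$" clause is handled the same way, with the additional care that the measures $\nu$ achieving $f=f_\nu$ form the set over which the infimum is taken, and one picks whichever $\nu$ in that set one is tilting toward; the domain-shrinkage in $\fs_k$ and the $W(\fs_k)$ factor are exactly what let a single finite-dimensional neighbourhood sit inside $A_k$.

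\textbf{Lower bound $\IM(f_\nu)\ge \H(\nu|\mu)$ (the harder direction).} For this I would use an upper-bound estimate on $\mover(A_k(f_\nu))$. Because $A_k(f_\nu)$ is contained in a set of the form $\{g : |g(\theta_j) - f_\nu(\theta_j)| < 1/k' \text{ for finitely many } \theta_j\}$ (shrink the window as $k\to\infty$), one gets $\mover(A_k(f_\nu)) \le$ the rate, from the finite-dimensional Cram\'er theorem for $(e^{\theta_1 X},\dots,e^{\theta_m X})$, of the event that the empirical mean lands near $(f_\nu(\theta_j))_j$. As $k\to\infty$ these finite-dimensional rates increase to $\sup\{\langle \lambda, \text{stuff}\rangle - \Lambda_{\text{joint}}(\lambda)\}$, and by the duality between the moment problem and relative entropy (the standard identity that the Legendre transform of the joint log-MGF, optimised over all finite sets of test functions $e^{\theta\cdot}$, recovers $\H(\nu|\mu)$ for $\nu$ with the prescribed moments), this supremum is $\ge \H(\nu|\mu)$ — and in the limit equals it. Therefore $\IM(f_\nu) = -\inf_k\mover(A_k(f_\nu)) \ge \H(\nu|\mu)$. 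I expect the main obstacle to be precisely this last step: showing that the finite-dimensional rates built from exponential test functions $e^{\theta\cdot}$ increase to the full relative entropy $\H(\nu|\mu)$ rather than to something strictly smaller — i.e.\ that exponentials are a rich enough separating family for the variational problem. This should follow from the fact that the CGF $\Lambda_\nu$ determines $\nu$ on its domain of finiteness and from lower-semicontinuity of $\H(\cdot|\mu)$ in the weak topology (Sanov / Donsker--Varadhan), approximating $\H(\nu|\mu)$ from below by its finite-dimensional Legendre-transform truncations; combining convexity of $\IM$ (Proposition \ref{lemma:convex}) with the matching upper bound pins down equality. For the "finite only at $0$" case the inequality is one-directional and follows already from the upper-bound argument above, so no separate work is needed there.
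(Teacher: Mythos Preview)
Your approach is genuinely different from the paper's, and the lower bound direction contains a real gap.

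\textbf{What the paper does.} The paper never passes through finite-dimensional Cram\'er. Instead it transfers Sanov's theorem directly via the correspondence between MGFs and measures. For $\IM(f_\nu)\ge \H(\nu|\mu)$: the map $\Phi:\X_{INT}\to\Measures$ sending a MGF (finite on an open interval) to its unique measure is continuous in the weak topology, so every weak-open $G\ni\nu$ pulls back to some $\taw$-open $G'\ni f_\nu$; since $P(\estL\in G)=P(\estM\in G')$, the Sanov lower-deviation value $-\H(\nu|\mu)$ dominates $-\IM(f_\nu)$. For $\IM(f_\nu)\le \H(\nu|\mu)$: first restrict to compactly supported $\nu$, where the inverse map $\Gamma|_{\Y_N}:\nu\mapsto f_\nu$ is continuous, and run the same comparison in the other direction; then extend to arbitrary $\nu$ by conditioning on $[-n,n]$, using $\H(\nu_n|\mu)\to\H(\nu|\mu)$ and lower semicontinuity of $\IM$. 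No finite-dimensional projection, and no claim about exponential test functions recovering relative entropy.

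\textbf{The gap in your lower bound.} Your argument hinges on the assertion that the finite-dimensional Cram\'er rates for $(e^{\theta_1 X},\dots,e^{\theta_m X})$, evaluated at $(f_\nu(\theta_j))_j$, increase to $\H(\nu|\mu)$ as the grid refines. You correctly flag this as the main obstacle, but the justification offered (``exponentials are a rich enough separating family'') does not close it. The Donsker--Varadhan variational formula achieves $\H(\nu|\mu)$ as a supremum over \emph{bounded} test functions $\phi$, with optimiser $\phi=\log(d\nu/d\mu)$; functions of the form $\sum_j\lambda_j e^{\theta_j x}$ are unbounded and need not approximate $\log(d\nu/d\mu)$ in any sense that forces $\int\phi\,d\nu-\log\int e^\phi\,d\mu$ to approach $\H(\nu|\mu)$. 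Determinacy of the moment problem tells you that the $f_\nu(\theta_j)$ pin down $\nu$, but it does \emph{not} tell you that the associated Legendre transforms exhaust the entropy. Proving this convergence is at least as hard as the proposition itself---indeed it is essentially a projective-limit re-derivation of Sanov for this particular family of projections, and you would still need to handle the lack of continuity of $\kappa\mapsto\int e^{\theta x}d\kappa$ in the weak topology. The paper sidesteps all of this by working with the full empirical measure and the continuity of $\Phi$ and $\Gamma|_{\Y_N}$.

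\textbf{On your upper bound.} The tilting idea is sound in spirit, but the intermediate claim that a set of the form $\{g:|g(\theta_j)-f_\nu(\theta_j)|<\delta\ \forall j,\ g\text{ finite only on a suitable interval}\}$ sits inside $A_k(f_\nu)$ is not usable as stated: $\estM$ is finite on all of $\R$, so it never satisfies a domain restriction. What you actually need is that $P_\nu(\estM\in A_k(f_\nu))\to 1$, which is the weak law for $\estM$ under $\nu$; this is true but in the paper is Corollary~\ref{cor:weaklaw}, proved downstream of the present proposition. The paper's route via $\Gamma|_{\Y_N}$ and approximation by compactly supported $\nu_n$ avoids this circularity.
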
 
\begin{proof}
See Section \ref{sec:proofs}.
\end{proof}
This result is proved by three smaller results, the first two of
which rely on continuity or inverse continuity of the $\nu\mapsto
f_\nu$ operation when restricted to certain subsets of $\Measures$
or $\MSpacesmall$. First, for any moment generating function $f_\nu$
finite on a non-empty open interval, $\IM(f_\nu)\geq \H(\nu|\mu)$.
Second, for any $f_\nu\in\BSpace$, $\IM(f_\nu)\leq \H(\nu|\mu)$.
Finally, for any moment generating function $f$, $\IM(f)\leq
\inf_{\left\{\nu:f=f_\nu\right\}}\H(\nu|\mu)$.  For any MGFs $f$
finite on a non-empty open interval, $f=f_\nu$ for exactly one
measure $\nu$, giving us $\IM(f_\nu)=\H(\nu|\mu)$. For $f$ finite
only at 0, more work is required.

\subsection{Characterizing $\IM$ for the MGF finite only at 0 and for non-MGFs}

In short, we prove the following result, contained in Theorem \ref{thm:MLDP}.
\begin{proposition}[$I_M(f)$ for $f$ mimicking a MGF]\label{thm:Iformimics}
$\:$ \\
\begin{enumerate}[(a)]
\item For $f$ satisfying $f(0)=1$ and $f(\theta)=\infty$ otherwise, 
\begin{align*}
\IM(f)=\inf_{\{\nu:f=f_\nu\}}\H(\nu|\mu). 
\end{align*}
\item For any $f\in\effdom$ that is not a MGF but instead mimics the MGF $\fmimic$,
\begin{align*}
\IM(f)&=\IM(\fmimic)+\inf_{\{g\in \effdom,g\text{ a MGF}\}}\IM(g) \\
&=\left\{\begin{array}{ll} \IM(\fmimic) &\text{if }\IM(g)<\infty\text{ for some }g\in\effdom,g\text{ a MGF} \\
\infty &\text{otherwise}.\end{array}\right.
\end{align*}
\end{enumerate}
\end{proposition}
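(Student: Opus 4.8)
The plan is to exploit the local convex base $\{A_k(f)\}$ from Proposition~\ref{prop:Abase} together with the characterization $\IM(f)=-\inf_k\mover(A_k(f))$ from Theorem~\ref{thm:MLDPnonchar}, and to relate neighbourhoods of a function $f$ that mimics $\fmimic$ to neighbourhoods of $\fmimic$ itself. For part (a), where $f(0)=1$ and $f(\theta)=\infty$ for $\theta\neq0$, note that the base sets used are $V_k(f)$ rather than $A_k(f)$. The inequality $\IM(f)\leq\inf_{\{\nu:f=f_\nu\}}\H(\nu|\mu)$ is already supplied by Proposition~\ref{thm:I=H}, so the work is the reverse inequality $\IM(f)\geq\inf_{\{\nu:f_\nu=f\}}\H(\nu|\mu)$. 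Here I would argue that any sequence $\{\estM\}$ whose epigraphs approach that of $f$ in $\taw$ must, after passing to a subsequence and applying the Helly Selection Principle to the associated distribution functions (exactly as in the proof of Proposition~\ref{prop:limitsprop}), have underlying empirical measures converging weakly to some $\nu$ with $f_\nu=f$; then a standard lower-semicontinuity argument for relative entropy under the empirical-measure LDP (Sanov) gives the bound. Concretely, one covers $V_k(f)$ by preimages under the map $\estM\mapsto$ empirical measure of weak neighbourhoods and uses $\munder$ of those, together with Sanov's lower bound, to show $-\inf_k\munder(V_k(f))\geq\inf_{\{\nu:f_\nu=f\}}\H(\nu|\mu)$.

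For part (b), the key observation is that if $f\in\effdom$ mimics $\fmimic$ with $\overline{\effdomf}=[\alpha,\beta]$, then $f$ and $\fmimic$ agree on $\effdomf$ but differ in that $f$ is $+\infty$ somewhere $\fmimic$ is finite; the "extra mass" of $\nu$ lies outside $[\alpha,\beta]$ in a way that forces the empirical measures approaching $f$ to place vanishing but nonzero mass far out. I would show the two inequalities separately. For $\IM(f)\leq\IM(\fmimic)+\inf_{\{g\in\effdom,\,g\text{ a MGF}\}}\IM(g)$, construct near-optimal empirical realizations: take measures $\nu_m\to\fmimic$-realizing $\H$ near $\IM(\fmimic)$ and measures realizing a MGF $g\in\effdom$ near the second infimum, and splice them — put mass $1-1/m$ according to the first and mass $1/m$ far out to the right (and left) to kill the MGF outside $[\alpha,\beta]$ — checking that the spliced MGFs converge in $\taw$ to $f$ and that the relative entropy is additive in the limit (the cross term vanishes because the far-out mass $\to0$). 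For the reverse inequality, any approximating sequence decomposes: conditioning the empirical measure on a large compact set gives (in the limit) a measure realizing $\fmimic$, contributing at least $\IM(\fmimic)$; the complementary tail mass, however small, must itself look like an empirical MGF with effective domain shrinking to $[\alpha,\beta]$, contributing at least $\inf_{\{g\in\effdom,\,g\text{ a MGF}\}}\IM(g)$ — and if no $g\in\effdom$ has finite $\IM$, this second term is $+\infty$, giving the "$\infty$ otherwise" branch. Convexity of $\IM$ (Proposition~\ref{lemma:convex}) and the fact that $\IM$ is a good rate function (Theorem~\ref{thm:MLDPnonchar}) are used to pass infima through the decomposition and to guarantee minimizers exist.

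The main obstacle I anticipate is making the "splice and decompose" argument rigorous at the level of the Attouch-Wets topology rather than pointwise. Because $\taw$-convergence is neither weaker nor stronger than pointwise convergence, and because the delicate point is precisely the behaviour of the epigraph near $\theta=\beta$ (and $\theta=\alpha$) where $f$ jumps to $+\infty$, one must control $d(x,\epi(\cdot))$ uniformly on balls $\Bc_k$ while the far-out mass is being sent to zero — the rate at which the auxiliary mass is pushed out must be coordinated with the rate at which its location goes to infinity so that the spliced function's epigraph genuinely converges to $\epi(f)$ and not to $\epi(\fmimic)$ or to something with the wrong effective domain. Getting the additivity of relative entropy in the limit (no loss from the cross term, no loss from the conditioning on compacts) is the technical heart; I would handle it by explicit estimates on $\H(\nu_1\oplus\nu_2|\mu)$ for measures built by mixing, using that $\mu$ is fixed and the mixing weight on the tail piece tends to $0$.
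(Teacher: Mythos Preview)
Your proposal has genuine gaps in both parts, and the paper's route is quite different in spirit.

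\textbf{Part (a).} Your reverse inequality via Helly/Sanov does not go through for the spike function. The claim that ``the underlying empirical measures must converge weakly to some $\nu$ with $f_\nu=f$'' is false here: take any probability measure $\kappa$ whatsoever, and set $\nu_n=(1-2/n)\kappa+(1/n)\delta_{n^2}+(1/n)\delta_{-n^2}$. Then $f_{\nu_n}\to f$ in $\taw$ (the point masses at $\pm n^2$ force $f_{\nu_n}(\theta)\to\infty$ for every $\theta\neq0$), yet $\nu_n\to\kappa$ weakly, and $f_\kappa$ need not equal $f$. In the language of the paper's Proposition~\ref{prop:Lessthan1}, the set $C=\bigcap_m\overline{A_m}$ is not contained in $\{\nu:f_\nu=f\}$; it can contain $\mu$ itself, so the Sanov bound yields only the trivial $\IM(f)\geq0$. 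The paper sidesteps this entirely: it first shows (Lemma~\ref{lemma:mimics}\,(1), a one-line convexity trick using $\gamma f+(1-\gamma)g=f$) that $\IM(f)\in\{0,\infty\}$, then shows by mixing with $\mu$ that $\inf_{\{\nu:f_\nu=f\}}\H(\nu|\mu)\in\{0,\infty\}$ as well, and finally invokes Lemma~\ref{thm:5equiv}\,$(4)\Leftrightarrow(5)$ with $[\alpha,\beta]=\{0\}$ to see the two are infinite simultaneously.

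\textbf{Part (b).} Your splice direction is essentially Lemma~\ref{lemma:mimics}\,(2) and is fine, but the decomposition for the lower bound is not: if the ``tail piece'' has mass $\epsilon_m\to0$, its relative-entropy contribution is $O(\epsilon_m\log(1/\epsilon_m))\to0$, not $\inf_{\{g\in\effdom\}}\IM(g)$. There is no additive splitting of $\H$ that produces a non-vanishing second term from a vanishing mixture weight. This is exactly why the paper observes that $\inf_{\{g\in\effdom,\,g\text{ a MGF}\}}\IM(g)\in\{0,\infty\}$ (again by mixing with $f_{\mu_n}$ and convexity), so the formula $\IM(f)=\IM(\fmimic)+\inf\IM(g)$ is really just a dichotomy. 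The lower bound $\IM(f)\geq\IM(\fmimic)$ comes from the Sanov-style argument you sketch (this is Lemma~\ref{lemma:mimics}\,(3), and it \emph{does} work when $[\alpha,\beta]\neq\{0\}$ because then the limit set $C$ is the singleton $\{\nu\}$), while the case $\IM(f)=\infty$ is handled not by any decomposition but by Lemma~\ref{thm:5equiv}\,$(4)\Rightarrow(5)$: if no MGF in $\effdom$ has finite $\IM$, then \emph{every} function in $\effdom$, mimic or not, has $\IM=\infty$.

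In short, the paper's proof is not a direct estimate but a reduction: convexity forces all the quantities involved to be two-valued, and the Five Equivalences lemma matches up which value they take. Your approach attempts to avoid Lemma~\ref{thm:5equiv} altogether, but that lemma (specifically the implication $(4)\Rightarrow(1)$, which is the hard step) is doing the real work in the $\infty$ case and cannot be replaced by a splice-and-decompose argument.
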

\begin{proof}
Proposition \ref{thm:Iformimics} will follow from Lemmas \ref{lemma:mimics} and \ref{thm:5equiv}
below; see Section \ref{sec:proofs}.
\end{proof}

Part (a) is, perhaps, to be expected in light of earlier results.
The first equality in part (b) appears less useful than the second,
but it gives more insight into the reason that the rate function
takes the form that it does. It may seem surprising that functions
in $\MSpacesmall$ that are not MGFs lie in the effective domain of
$\IM$, but this is the case.

We begin with a condition for when a function $f\in\effdom$ that is
not moment generating function is in the effective domain of $\IM$,
i.e. that any moment generating function in $\effdom$ is in the
effective domain of $\IM$, along with the moment generating function
that $f$ is mimicking. 

\begin{lemma}[Characterizations for $f$ not a MGF]
\label{lemma:mimics}
We have the following characterizations.
\begin{enumerate}
\item
\label{lemma:valuesforspike}
For $f$ satisfying $f(0)=1$ and $f(\theta)=\infty$ for $\theta\neq 0$, $\IM(f)\in\{0,\infty\}$.
\item
\label{lemma:finitemimic}
If $\IM(g)<\infty$ for some function $g\in\effdom$,
$[\alpha,\beta]\neq[0,0]$, then $\IM(f)\leq\IM(\fmimic)$ for
$f\in\effdom$ mimicking $\fmimic$.
\item
\label{lemma:valuesformimics}
If $f\in\effdom$ mimics $\fmimic$ for some $[\alpha,\beta]\neq[0,0]$ then $\IM(f)\in\{\IM(\fmimic),\infty\}$. 
\end{enumerate}
\end{lemma}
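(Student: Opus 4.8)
The three statements share a common engine: the analysis of convex combinations of MGFs whose epigraphs are strictly nested, together with the subadditivity of $\IM$ obtained from Proposition \ref{prop:super} via the convex base $\{A_k(f)\}$. I would organize the argument around the fact that if $g\lll h$ in the sense defined before Proposition \ref{prop:Abase}, then $h\in W(g) \subset V_k(g)$ for suitable $k$, so that $h$ lies in a base set whose deviation functions coincide. The plan is first to establish (1), then use it and the convexity of $\IM$ (Proposition \ref{lemma:convex}) to bootstrap to (2), and finally combine (1) and (2) to get (3).

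For part (1): let $f$ be the spike at $0$, i.e.\ $f(0)=1$ and $f(\theta)=\infty$ otherwise. The key observation is that $f = \tfrac12 g + \tfrac12 \tilde g$ for \emph{many} pairs $g,\tilde g\in\MSpacesmall$ — for instance $g$ the MGF of a point mass at a point $x<0$ on $(-\infty,0]$ and $\tilde g$ finite only on $[0,\infty)$, or more flexibly any $g$ finite only on a left half-line and $\tilde g$ finite only on the mirror-image right half-line whose values average to $f$ on the common domain $\{0\}$. By convexity of $\IM$, $\IM(f) \le \tfrac12\IM(g) + \tfrac12\IM(\tilde g)$. Now I would exploit the freedom in this decomposition: by scaling the slopes one shows $\inf$ over such decompositions drives $\IM(f)$ to either $0$ or, if no finite-rate $g$ of the required one-sided type exists, to $\infty$; intermediate values are impossible because one can always further decompose a minimizing pair and use convexity to halve the slack, contradicting minimality unless the value is already at one of the two extremes. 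Concretely: if $\IM(f)=c\in(0,\infty)$, pick $g,\tilde g$ with $\tfrac12\IM(g)+\tfrac12\IM(\tilde g)$ close to $c$; by symmetry of the construction one may take $\IM(g)=\IM(\tilde g)$, so $\IM(g)\approx c$; but $g$ itself (finite only on a half-line through $0$) can be written as an average of the spike $f$ and another half-line function, giving $\IM(g)\le \tfrac12\IM(f)+\tfrac12\IM(\text{something})$, and iterating forces $c=0$. The only way this fails is when every such auxiliary function has infinite rate, in which case $\IM(f)=\infty$.

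For part (2): suppose $g_0\in\effdom$ is a MGF with $\IM(g_0)<\infty$, $[\alpha,\beta]\ne[0,0]$, and let $f\in\effdom$ mimic $\fmimic$. Since $g_0$ and $\fmimic$ both have closure of effective domain equal to $[\alpha,\beta]$, and $\fmimic$ (being a genuine MGF) is finite on a larger set in general, the point is that $\fmimic\lll$-approximates $f$: one can find MGFs $h_j$ (e.g.\ $h_j = $ MGF of $\nu$ conditioned to $[-j,j]$, where $f$ mimics $f_\nu$) with $h_j\lll f^\sharp_k$-type nesting, $h_j\to f$ in $\taw$, and $\IM(h_j)\le \H(\nu_j|\mu)\to \H(\nu|\mu)$ — but more directly, since $f$ and $\fmimic$ agree on $\effdomf$ and $f=\infty$ off it while $\fmimic$ may be finite there, we have $f\in W(\fmimic')$ for a slightly shrunk $\fmimic'$, hence $f$ lies in base sets $A_k$ around functions arbitrarily close to $\fmimic$. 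Using lower semicontinuity of $\IM$ and $\IM(\fmimic')\to\IM(\fmimic)$ (continuity of $\IM$ along such approximations, from Proposition \ref{thm:I=H} and the continuity of $\nu\mapsto f_\nu$ on the relevant subset) one concludes $\IM(f)\le\IM(\fmimic)$. The role of the hypothesis $\IM(g_0)<\infty$ is to guarantee $\fmimic$ itself has finite rate, i.e.\ that $\IM(\fmimic)<\infty$: since $g_0$ and $\fmimic$ share the effective-domain closure $[\alpha,\beta]$, one writes $\fmimic$ as a convex combination involving $g_0$ and applies convexity.

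For part (3): by part (2), if some MGF in $\effdom$ has finite rate then $\IM(f)\le\IM(\fmimic)$; the reverse inequality $\IM(f)\ge\IM(\fmimic)$ (hence equality) comes from the fact that any base set $A_k(f)$ contains MGFs close to $f$, which themselves mimic or are close to $\fmimic$, combined with lower semicontinuity; alternatively it is deferred to Lemma \ref{thm:5equiv}. If, on the other hand, no MGF in $\effdom$ has finite rate, then any function $g$ with $g\lll f^\sharp_k$ that could witness a finite deviation value must fail to be a MGF of the right domain, and tracing through the subadditivity of Proposition \ref{prop:super} one finds $\munder(A_k(f))=-\infty$, giving $\IM(f)=\infty$. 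So in all cases $\IM(f)\in\{\IM(\fmimic),\infty\}$. \textbf{The main obstacle} I anticipate is making rigorous the ``iterate the convex decomposition to halve the slack'' argument in part (1) — one must be careful that the auxiliary functions produced at each stage genuinely lie in $\MSpacesmall$ (they must be MGFs or mimic one) and that the limiting procedure respects lower semicontinuity of $\IM$; the convexity statement alone gives only an upper bound, and the lower bound forcing the value into $\{0,\infty\}$ requires either an explicit large-deviation lower-bound computation for spike-approximating sequences or a clean dichotomy extracted from the structure of $\overline{\BSpace}$.
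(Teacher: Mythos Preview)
Your plan has genuine gaps in all three parts, and in particular misses the very short arguments the paper uses.

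\textbf{Part (1).} Your iterative ``halve the slack'' decomposition is unnecessary and, as you yourself note, hard to make rigorous. The paper's argument is a one-liner you have overlooked: for the spike $f$ and \emph{any} $g\in\MSpacesmall$, the convex combination $\gamma f+(1-\gamma)g$ is infinite off $\{0\}$ (because $f$ is) and equals $1$ at $0$, so $\gamma f+(1-\gamma)g=f$ identically. Convexity of $\IM$ then gives $\IM(f)\le\gamma\IM(f)+(1-\gamma)\IM(g)$, hence $\IM(f)\le\IM(g)$ for every $g$; taking $g=f_\mu$ yields $\IM(f)=0$ whenever $\IM(f)<\infty$. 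No decomposition into left/right half-line functions is needed.

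\textbf{Part (2).} You misidentify the role of the hypothesis $\IM(g)<\infty$. It is \emph{not} there to force $\IM(\fmimic)<\infty$ (the inequality $\IM(f)\le\IM(\fmimic)$ is vacuous if $\IM(\fmimic)=\infty$). Its role is that $\gamma g+(1-\gamma)\fmimic$ has effective-domain closure exactly $[\alpha,\beta]$ for every $\gamma\in(0,1]$ (because $g\in\effdom$), and hence converges to $f$ in $\taw$ as $\gamma\downarrow 0$; then lower semicontinuity plus convexity give
\[
\IM(f)\le\liminf_{\gamma\downarrow 0}\bigl(\gamma\IM(g)+(1-\gamma)\IM(\fmimic)\bigr)=\IM(\fmimic),
\]
where $\IM(g)<\infty$ is needed so that $\gamma\IM(g)\to 0$. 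Your $h_j$-conditioning approach could be repaired, but the direct convex-combination route is both simpler and what the paper does.

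\textbf{Part (3).} Your proposed lower bound $\IM(f)\ge\IM(\fmimic)$ via ``base sets $A_k(f)$ contain MGFs close to $\fmimic$, combined with lower semicontinuity'' does not work: lower semicontinuity of $\IM$ gives $\IM(f)\le\liminf\IM(f_n)$ for $f_n\to f$, which is the wrong direction. You also cannot defer to Lemma \ref{thm:5equiv}, since that lemma is proved \emph{after} this one and uses it. The paper instead transfers to measures: with $G_m$ a descending base at $f$ and $A_m=\Phi(G_m\cap\BSpace)$, Sanov's upper bound gives $\IM(f)\ge\inf_{\kappa\in C}\H(\kappa|\mu)$ where $C=\bigcap_m\overline{A_m}$; one then shows (via the characterization in Proposition \ref{prop:limitsprop}) that any $\kappa\in C$ must equal $\nu$, so $\IM(f)\ge\H(\nu|\mu)=\IM(\fmimic)$. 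Once this is in hand, if $\IM(f)<\infty$ you may take $g=f$ in part (2) to get $\IM(f)\le\IM(\fmimic)$, completing the dichotomy. The case split you propose (``no MGF in $\effdom$ has finite rate implies $\IM(f)=\infty$'') is the content of the later Proposition \ref{thm:Iformimics}(b) and is not needed here.
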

\begin{proof}
See Section \ref{sec:proofs}.
\end{proof}

Combining statements \ref{lemma:finitemimic} and
\ref{lemma:valuesformimics} gives us that if $f_\nu$, $f$ and $g$
satisfy $\IM(f_\nu),\IM(g)<\infty$, $g,f\in\effdom$, and $f$ mimics
$f_\nu$, then $\IM(f)<\infty$.  Establishing conditions for when
$\IM(f)=\infty$ is quite involved and to do so we prove the following.

\begin{lemma}[Five equivalences]\label{thm:5equiv}
The following are equivalent for all pairs $(\alpha,\beta)\in[-\infty,0]\times[0,\infty]$:
\begin{enumerate}
\item The random walk associated with $e^{\gamma X_1}$, where the distribution of $X_1$ 
corresponds to the measure $\mu$, satisfies
Cram\'er's Theorem for some $\gamma\not\in[\alpha,\beta]$,
\item $\IM(f)=\infty$ for all $f\in\effdomsub$,
\item $\H(\nu|\mu)=\infty$ for all $\nu$ with $f_\nu\in\effdomsub$,
\item $\H(\nu|\mu)=\infty$ for all $\nu$ with $f_\nu\in\effdom$,
\item $\IM(f)=\infty$ for all $f\in\effdom$.
\end{enumerate}
\end{lemma}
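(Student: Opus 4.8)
The plan is to establish the five statements as equivalent by proving the cycle
$(1)\Rightarrow(2)\Rightarrow(5)\Rightarrow(4)\Rightarrow(1)$ together with
$(1)\Rightarrow(3)\Rightarrow(4)$, converting between the $\IM$-statements and the
relative-entropy statements via Proposition~\ref{thm:I=H} and, for the elements of
$\effdomsub$ or $\effdom$ that are not moment generating functions finite on an open
interval, Lemma~\ref{lemma:mimics}. Several links are immediate: since
$\effdom\subseteq\effdomsub$ we get $(2)\Rightarrow(5)$ and $(3)\Rightarrow(4)$ at
once; and for $(5)\Rightarrow(4)$, if $f_\nu\in\effdom$ and $[\alpha,\beta]$ has
non-empty interior then $f_\nu$ is finite on an open interval, so
Proposition~\ref{thm:I=H} gives $\IM(f_\nu)=\H(\nu|\mu)$, while if $[\alpha,\beta]=[0,0]$
the second part of Proposition~\ref{thm:I=H} upgrades $\IM$ of the origin-spike to
$\inf_{\{\nu:\,f_\nu=\mathrm{spike}\}}\H(\nu|\mu)$ and so forces every such relative
entropy to be infinite. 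I would prove $(1)\Leftrightarrow(3)\Leftrightarrow(4)$ first
--- a statement about $\mu$ and measures $\nu$ with $f_\nu$ a moment generating
function only --- and then feed it back to obtain $(1)\Rightarrow(2)$: given $(1)$,
any $f\in\effdomsub$ has $\overline{\effdomf}=:[\alpha',\beta']\subseteq[\alpha,\beta]$
and $(1)$ holds for $[\alpha',\beta']$ as well (any $\gamma\notin[\alpha,\beta]$ also
lies outside $[\alpha',\beta']$), so $(3)$ for $[\alpha',\beta']$ with
Proposition~\ref{thm:I=H} gives $\IM(f)=\infty$ when $f$ is a moment generating
function finite on an open interval, the origin-spike is covered by
Lemma~\ref{lemma:mimics}(1), and a function mimicking some $\fmimic$ is covered by
Lemma~\ref{lemma:mimics}(2)--(3). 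Thus the work reduces to $(1)\Rightarrow(3)$ and
$(4)\Rightarrow(1)$.

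For $(1)\Rightarrow(3)$ I would use the Gibbs variational inequality (see
\cite{Dembo98}): for any probability measure $\nu$ and measurable $\phi$ with
$\int e^{\phi}\,d\mu<\infty$,
\[
\H(\nu|\mu)\ \ge\ \int\phi\,d\nu\ -\ \log\int e^{\phi}\,d\mu .
\]
Given $(1)$, fix $\gamma\notin[\alpha,\beta]$ and $s>0$ with
$E(e^{s e^{\gamma X_1}})<\infty$; since $0\in[\alpha,\beta]$ we may take $\gamma>\beta$
(the case $\gamma<\alpha$ is symmetric, with the same $\phi$), and apply the
inequality with $\phi(x)=s e^{\gamma x}$, which is non-negative and satisfies
$\int e^{\phi}\,d\mu=E(e^{s e^{\gamma X_1}})<\infty$. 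Any $\nu$ with $f_\nu\in\effdomsub$
has $\sup\mathcal{D}_{f_\nu}\le\beta<\gamma$, hence $\int e^{\gamma x}\,d\nu=f_\nu(\gamma)=\infty$
and $\int\phi\,d\nu=\infty$; truncating $\phi$ at height $M$ and letting $M\to\infty$
yields $\H(\nu|\mu)=\infty$. With the trivial inclusion $(3)\Rightarrow(4)$ this gives
$(1)\Rightarrow(3)\Rightarrow(4)$.

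The main obstacle is $(4)\Rightarrow(1)$, which I would prove contrapositively: if
$(1)$ fails, build a probability measure $\nu\ll\mu$ with
$\overline{\mathcal{D}_{f_\nu}}=[\alpha,\beta]$ and $\H(\nu|\mu)<\infty$, so that $(4)$ fails.
The failure of $(1)$ says exactly that for every $\gamma\notin[\alpha,\beta]$ the
i.i.d.\ random walk with increments $e^{\gamma X_1}$ fails Cram\'er's theorem, i.e.\
$E(e^{s e^{\gamma X_1}})=\infty$ for all $s>0$; quantitatively, on whichever sides
$[\alpha,\beta]$ is bounded the corresponding tail of $\mu$ is not
double-exponentially thin. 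I would set $d\nu/d\mu=h$ with $h$ chosen to inflate those
tails of $\mu$ just enough that $f_\nu$ becomes infinite precisely at the $\theta$
outside $[\alpha,\beta]$ --- forcing $\overline{\mathcal{D}_{f_\nu}}=[\alpha,\beta]$ --- while
keeping $\int h\log h\,d\mu<\infty$: on a bounded right side one takes $h$
proportional to $e^{\beta x}$ over the density of $\mu$, damped by a subexponential
factor so that $\beta$ itself sits on the boundary of the effective domain, and
symmetrically on the left, with $\nu=\mu$ itself already serving when
$\overline{\mathcal{D}_{f_\mu}}=[\alpha,\beta]$; the non-double-exponential decay forced by
$\neg(1)$ is exactly what keeps $\int h\log h\,d\mu$ finite. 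Proposition~\ref{thm:I=H}
then gives $\IM(f_\nu)=\H(\nu|\mu)<\infty$, so $\neg(5)$ holds too, and running the
cycle backwards yields all five equivalences. I expect most of the effort to lie in
tuning $h$ so that $\overline{\mathcal{D}_{f_\nu}}$ is \emph{exactly} $[\alpha,\beta]$ while
$\int h\log h\,d\mu$ stays finite, uniformly in how nearly the tails of $\mu$ approach
the double-exponential threshold; the degenerate case $[\alpha,\beta]=[0,0]$, where
everything comes down to whether $\IM$ of the origin-spike is $0$ or $\infty$, is
treated separately via Lemma~\ref{lemma:mimics}(1) and a direct large-deviation
estimate.
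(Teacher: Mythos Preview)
Your Donsker--Varadhan/Gibbs argument for $(1)\Rightarrow(3)$ is correct and is a genuine simplification over the paper, which reaches $(3)$ only via the direct large-deviation estimate $(1)\Rightarrow(2)$ followed by Proposition~\ref{thm:I=H}. Applying the variational inequality with $\phi(x)=s e^{\gamma x}$ (truncated, then $M\to\infty$) is exactly right.

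There is, however, a real gap in your route to $(1)\Rightarrow(2)$. For $f\in\effdomsub$ mimicking some $\fmimic$, Lemma~\ref{lemma:mimics}(3) only tells you $\IM(f)\in\{\IM(\fmimic),\infty\}$; it does \emph{not} force $\IM(f)=\infty$ unless you already know $\IM(\fmimic)=\infty$. But $\fmimic$ need not lie in $\effdomsub$: the mimicked MGF can have a strictly larger effective domain than $f$, possibly reaching outside $[\alpha,\beta]$, and then neither $(3)$ nor $(4)$ says anything about $\H(\nu|\mu)$. (Concretely: take $\mu$ with $\beta_0>0$ as in the table in Section~\ref{sec:Examples}, take $\nu=\mu$, and let $f$ mimic $f_\mu$ on an interval strictly inside $[\alpha_0,\beta_0]$; then $\IM(\fmimic)=0$ while $(1)$ holds for the small interval.) The same issue hits the spike: Lemma~\ref{lemma:mimics}(1) gives only $\IM(\text{spike})\in\{0,\infty\}$, and Proposition~\ref{thm:I=H} gives only the upper bound $\IM(\text{spike})\le\inf_\nu\H(\nu|\mu)$, which is useless here. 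The paper avoids all of this by proving $(1)\Rightarrow(2)$ with a direct estimate on $P(\estM\in V_k(f))$: if $f\in\effdomsub$ then membership in $V_k(f)$ forces $\estM(\gamma)>k$ at some $\gamma\notin[\alpha,\beta]$, and Cram\'er for $e^{\gamma X_1}$ finishes it. This is essentially the ``direct large-deviation estimate'' you already flag for the spike; you need it for \emph{every} $f\in\effdomsub$, not just the spike, after which your detour through Lemma~\ref{lemma:mimics} becomes unnecessary.

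Your sketch of $(4)\Rightarrow(1)$ by contrapositive is also where the real work hides, and ``$h$ proportional to $e^{\beta x}$ over the density of $\mu$, damped'' does not survive contact with examples: if $\mu$ has tail $\asymp e^{-e^{\beta x}}$ (so $(1)$ just barely fails for $\gamma>\beta$) and you take $d\nu\propto e^{-\beta x}dx$ to hit $\overline{\mathcal D_{f_\nu}}=[\cdot,\beta]$, a direct computation gives $\H(\nu|\mu)=\infty$. The paper proceeds in the \emph{forward} direction instead: assuming $(4)$, it first reduces (via a left/right splitting argument) to the statement that the positive-tail contribution to $\H(\nu|\mu)$ is infinite for every $\nu$ with $f_\nu\in\effdom$, then writes a deliberately very heavy $\nu$ as $\nu=C\,g\,\mu$ and sets $\kappa\propto\nu/\log g$. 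This $\kappa$ has \emph{finite} positive-tail entropy by construction, so $(4)$ forces $\sup\mathcal D_{f_\kappa}>\beta$, and unpacking what that says about $g$ yields $\log g(k)\gtrsim e^{(\beta+\theta)k}$, hence $\mu(k)\lesssim e^{-e^{(\beta+\theta)k}}$ and $(1)$ follows. The $\kappa=\nu/\log g$ trick is the non-obvious step, and your contrapositive plan does not supply an analogue of it.
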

\begin{proof}
See Section \ref{sec:proofs}.
\end{proof}

Note that, in light of Proposition \ref{thm:I=H}, when
$[\alpha,\beta]\neq[0,0]$, (4) can be restated as $\IM(f_\nu)=\infty$
for all $f_\nu\in\effdom$. This provides a sufficient condition for
when $\IM(f)=\infty$ for $f\in\effdom$, $f$ not a MGF, as $(4)\Rightarrow
(5)$ establishes that $\IM(f)=\infty$ for all $f\in\effdom$ if
$\IM(f)=\infty$ for all MGFs in $\effdom$.

Some of these statements may not be of interest in their own right.
The condition (1) is included mainly as an aid to establishing the
other equivalences. Indeed, in light of Proposition \ref{thm:I=H}
we only need $(4)\Rightarrow (5)$ to prove Proposition
\ref{thm:Iformimics}. However, it will be seen in Section
\ref{sec:Examples} that $(1)\Rightarrow(4)$ is useful for characterising
$\IM$ for specific distributions. The relation $(5)\Rightarrow
(1)$ is also of interest, as it gives us the following corollary.
Although it will not be used to prove any of the main results, it
is useful in practical characterisations of $\IM$ for specific
distributions $\mu$, as seen is Section \ref{sec:Examples}.

\begin{corollary}[To lemma \ref{thm:5equiv}]\label{cor:mineffdom}
Under the assumptions of the Theorem \ref{thm:MLDP},\\
\begin{enumerate}
\item There exist $\alpha_0\in[-\infty,0]$, $\beta_0\in[0,\infty]$
such that $f\in\effdom$ mimicking $f_\nu$ with $\IM(f_\nu)<\infty$ satisfies $\IM(f)=\IM(f_\nu)$
if $[\alpha_0,\beta_0]\subset[\alpha,\beta]$, otherwise $\IM(f)=\infty$.
\item For any $f\in\MSpacesmall$, $\IM(f)<\infty\Rightarrow \overline{\effdomf}\supset[\alpha_0,\beta_0]$.
\item For $f\in\MSpacesmall$ finite only at 0, $\IM(f)=0$ if and only if $\alpha_0=\beta_0=0$.
\end{enumerate}
\end{corollary}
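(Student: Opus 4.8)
The plan is to derive Corollary~\ref{cor:mineffdom} directly from Lemma~\ref{thm:5equiv} together with the characterizations already assembled in Propositions~\ref{thm:I=H} and~\ref{thm:Iformimics}. The first step is to identify the critical interval $[\alpha_0,\beta_0]$. For each pair $(\alpha,\beta)\in[-\infty,0]\times[0,\infty]$ the five conditions of Lemma~\ref{thm:5equiv} are simultaneously true or false; let $\mathcal{F}$ denote the family of $[\alpha,\beta]$ for which they are \emph{false}, i.e. for which there exists a MGF $f\in\effdom$ with $\IM(f)<\infty$ (equivalently, by equivalence (3)--(4), a measure $\nu$ with $f_\nu\in\effdom$ and $\H(\nu|\mu)<\infty$). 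One checks that $\mathcal{F}$ is nonempty --- $[0,0]$ itself is in $\mathcal{F}$ since $\mu\in\effdom[0,0]$ trivially after conditioning arguments, or more simply any bounded-support conditioning of $\mu$ gives a MGF finite on all of $\R$ with finite relative entropy --- and that $\mathcal{F}$ is closed downward under the partial order $[\alpha,\beta]\subset[\alpha',\beta']$: if $f\in\effdom[\alpha',\beta']$ has finite rate and $[\alpha,\beta]\subset[\alpha',\beta']$, restricting/truncating gives a witness in $\effdom$, using that a narrower effective domain is easier to achieve. Hence the ``smallest'' such interval is well-defined: set $\alpha_0=\inf\{\alpha:[\alpha,\beta]\in\mathcal{F}\text{ for some }\beta\}$ and $\beta_0=\sup\{\beta:[\alpha,\beta]\in\mathcal{F}\text{ for some }\alpha\}$, and argue via equivalence (1) (existence of $\gamma\notin[\alpha,\beta]$ for which the tilted walk satisfies Cram\'er) that $[\alpha_0,\beta_0]$ itself lies in $\mathcal{F}$ --- this is where the monotone/limiting behaviour of Cram\'er's condition in the tilt parameter is used.

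With $[\alpha_0,\beta_0]$ in hand, part (1) is essentially a restatement. If $[\alpha_0,\beta_0]\subset[\alpha,\beta]$ then $[\alpha,\beta]\in\mathcal{F}$, so condition (2) of Lemma~\ref{thm:5equiv} fails, i.e. there is a MGF $g\in\effdom$ with $\IM(g)<\infty$; then Lemma~\ref{lemma:mimics}(\ref{lemma:finitemimic}) gives $\IM(f)\leq\IM(\fmimic)=\IM(f_\nu)$ for $f\in\effdom$ mimicking $f_\nu$, and Proposition~\ref{thm:Iformimics}(b) upgrades this to equality $\IM(f)=\IM(f_\nu)$ (the first equality there, $\IM(f)=\IM(f_\nu)+\inf_{g\in\effdom,\,g\text{ a MGF}}\IM(g)$, forces the infimum to be $0$ because conditioning $\nu$ on larger and larger bounded sets produces MGFs in $\effdom$ with relative entropy tending to $\H(\nu|\mu)<\infty$, hence in particular one can achieve MGFs in $\effdom$ of arbitrarily small rate). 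Conversely, if $[\alpha_0,\beta_0]\not\subset[\alpha,\beta]$ then $[\alpha,\beta]\notin\mathcal{F}$ by minimality of $[\alpha_0,\beta_0]$, so condition (5) of Lemma~\ref{thm:5equiv} holds: $\IM(f)=\infty$ for all $f\in\effdom$, which is exactly the stated ``otherwise'' case.

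Part (2) follows from part (1) by contraposition together with Proposition~\ref{thm:I=H}: if $\overline{\effdomf}\not\supset[\alpha_0,\beta_0]$, write $\overline{\effdomf}=[\alpha,\beta]$ (using that effective domains of elements of $\MSpacesmall$ are intervals containing $0$); then $[\alpha_0,\beta_0]\not\subset[\alpha,\beta]$, so by the argument above $\IM$ is identically $+\infty$ on $\effdom$, and since any $f$ with this closure of effective domain either is a MGF in $\effdom$ or mimics one and lies in $\effdom$, we get $\IM(f)=\infty$ in all cases --- here the case $f$ finite only at $0$ uses Lemma~\ref{lemma:mimics}(\ref{lemma:valuesforspike}), $\IM(f)\in\{0,\infty\}$, combined with the observation below. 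Part (3) is the $[\alpha_0,\beta_0]=[0,0]$ specialization of this circle of ideas: for $f\in\MSpacesmall$ finite only at $0$, $\IM(f)\in\{0,\infty\}$ by Lemma~\ref{lemma:mimics}(\ref{lemma:valuesforspike}), and $\IM(f)=0$ occurs exactly when $[0,0]$ is ``large enough'', i.e. when $[\alpha_0,\beta_0]\subset[0,0]$, which since $0\in[\alpha_0,\beta_0]$ always is equivalent to $\alpha_0=\beta_0=0$; the forward direction again invokes that $\IM(f)<\infty$ for such $f$ (via Proposition~\ref{thm:I=H}, $\IM(f)\leq\inf_{\nu:f=f_\nu}\H(\nu|\mu)<\infty$ is \emph{not} automatic, so one argues that finiteness of $\IM(f)$ forces, via Lemma~\ref{thm:5equiv} equivalence $(5)\Rightarrow(2)$ failing for $[0,0]$, that $[0,0]\in\mathcal{F}$, i.e. $\alpha_0=\beta_0=0$).

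The main obstacle I anticipate is the existence and correct identification of the minimal interval $[\alpha_0,\beta_0]$ --- specifically, verifying that the ``false'' family $\mathcal{F}$ has a least element in the inclusion order, rather than merely an infimum that might fail to be attained. This is precisely the content for which equivalence (1) of Lemma~\ref{thm:5equiv} was included (it is remarked in the excerpt that ``$(1)$ is included mainly as an aid''): one needs that the set of tilt parameters $\gamma$ for which the $e^{\gamma X_1}$-walk satisfies Cram\'er's theorem is itself an interval (or at least has the right closure/monotonicity), so that the union over $[\alpha,\beta]\in\mathcal{F}$ of the complementary intervals $\R\setminus[\alpha,\beta]$ behaves well and its complement is again achievable. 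Once that structural fact about tilts is nailed down, the rest is bookkeeping against the already-proven Propositions~\ref{thm:I=H} and~\ref{thm:Iformimics} and Lemma~\ref{lemma:mimics}.
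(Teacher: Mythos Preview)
Your overall strategy is the right one and matches the paper's: use equivalence~(1) of Lemma~\ref{thm:5equiv} to pin down the critical interval, then read off parts (1)--(3) from Lemma~\ref{lemma:mimics} and Proposition~\ref{thm:Iformimics}. However, there is a genuine error in your execution: the family $\mathcal{F}$ is closed \emph{upward} under inclusion, not downward. Via equivalence~(1), $[\alpha,\beta]\in\mathcal{F}$ (conditions false) means that \emph{no} $\gamma\notin[\alpha,\beta]$ gives a Cram\'er-valid tilted walk, i.e.\ the fixed set $C=\{\gamma:e^{\gamma X_1}\text{ satisfies Cram\'er}\}$ lies inside $[\alpha,\beta]$. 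Thus $\mathcal{F}=\{[\alpha,\beta]:C\subset[\alpha,\beta]\}$, which is manifestly upward closed. Your ``truncation'' argument for downward closure fails: narrowing the effective domain of a MGF requires \emph{heavier} tails on the measure, which can drive $\H(\nu|\mu)$ to infinity. Consequently your formulas $\alpha_0=\inf\{\alpha:\ldots\}$, $\beta_0=\sup\{\beta:\ldots\}$ give $-\infty$ and $+\infty$, not the intended minimal interval.

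The paper avoids this pitfall by defining $\alpha_0,\beta_0$ directly from $C$: set $\alpha_0=\inf C$ and $\beta_0=\sup C$ (the infimum and supremum of $\gamma$ for which the walk associated with $e^{\gamma X_1}$ satisfies Cram\'er). Then $[\alpha_0,\beta_0]\subset[\alpha,\beta]$ is exactly the condition that (1) fails for $[\alpha,\beta]$, and the dichotomy in part~(1) follows immediately from $(1)\Leftrightarrow(5)$ together with Lemma~\ref{lemma:mimics} parts~\ref{lemma:finitemimic} and~\ref{lemma:valuesformimics}. Part~(2) is the contrapositive via $(5)\Rightarrow(1)$. For part~(3), the forward direction is as you sketch; for the reverse, the paper gives an explicit construction: with $\mu_n$ the conditioning of $\mu$ on $[-n,n]$, take $f_n\in\mathcal{D}_{[-1/n,1/n]}$ mimicking $f_{\mu_n}$, so $f_n\to f$ and $\IM(f)\leq\lim\IM(f_n)=0$ by lower semicontinuity. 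Once you correct the monotonicity (equivalently, adopt the paper's direct definition of $\alpha_0,\beta_0$), the remainder of your argument goes through and is essentially identical to the paper's.
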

\begin{proof}
See Section \ref{sec:proofs}.
\end{proof}
The first statement entirely characterises $\IM$ for functions $f$
mimicking $f_\nu$, saying that if the closure of the effective
domain of $f$ contains some interval then $\IM(f)=\IM(f_\nu)$,
otherwise $\IM(f)=\infty$. It also tells us that $\IM$ has a unique
zero if and only if $[\alpha_0,\beta_0]\ni\mathcal{D}_{f_\mu}$. The
second states that this same interval must be in the closure of the
effective domain of any function $g$ such that $\IM(g)<\infty$, giving
a necessary condition for functions to be in the effective domain
of $\IM$.

\subsection{Proving the main results}
Equipped with these results we are now able to prove Proposition
\ref{thm:Iformimics}. Using Propositions \ref{thm:I=H} and
\ref{thm:Iformimics} and Theorem \ref{thm:MLDPnonchar}, as well as
Theorems \ref{thm:LLDP}, \ref{thm:JLDP} and \ref{thm:ILDP} we can prove Theorem
\ref{thm:MLDP}, from which Corollary \ref{cor:weaklaw} follows. The
proofs appear subsequently in Section \ref{sec:proofs}. Theorem
\ref{thm:loynes} is established in Section \ref{loynessec}.

\section{Examples}\label{sec:Examples}
As summarised in Theorem 3.1, $\IM(f_\nu)=\H(\nu|\mu)$ for all MGFs,
$f_\nu$, finite on a non-empty open interval, while $\IM(f)=\infty$
if $f\in\MSpacesmall$ is neither a MGF nor mimics one. The question
remains as to the value of $\IM(f)$ 
for $f$ mimicking a moment
generating function, $f_\nu$, and for the special function
\begin{align*}
f(\theta)=
\begin{cases} 
	1 & \text{ if } \theta=0\\
	\infty & \text{ otherwise}. 
\end{cases}
\end{align*}
This issue is reduced to that of finding $\alpha_0$ and $\beta_0$
defined in Corollary \ref{cor:mineffdom}.

The following table gives the values of $\alpha_0$ and $\beta_0$
in a variety of contexts. The calculations are straight forward and
are not shown. They can be evaluated by considering for what values
of $\gamma$ does $e^{\gamma X_1}$ have a MGF finite in a neighbourhood
of the origin, giving upper and lower bounds for both $\alpha_0$
and $\beta_0$ respectively, and by showing for what values of
$\alpha$ and $\beta$ Lemma \ref{thm:5equiv} (4) does not hold,
implying that Lemma \ref{thm:5equiv} (1) does not hold and hence giving 
lower and upper bounds for $\alpha_0$ and $\beta_0$ respectively.

\begin{figure}[h!]
\centering
\begin{tabular}{|c| c| c|} \hline 
Distribution & $\alpha_0$ & $\beta_0$  \\ \hline
$\mu$ compactly supported & $-\infty$ & $\infty$ \\ \hline
$\mu\sim \text{Normal}(\eta,\sigma^2)$ & 0 & 0 \\ \hline
$\mu(dx)\propto e^{-e^{\lambda x}}dx$ for $x>0$, $\lambda>0$ & $-\infty$ & $\lambda$ \\ \hline
$\mu(dx)\propto e^{-e^{x^\lambda}}dx$ for $x>0$, $\lambda>1$ & $-\infty$ & $\infty$ \\ \hline
\end{tabular}
\end{figure}

For $\mu$ compactly supported, $\alpha_0=-\infty$ and $\beta_0=\infty$
so that $\IM$ is finite only at MGFs, recovering \cite{Duffy05}[Theorem
1]. As the value of $\beta_0$ depends only on the right tail, and
a heavier right tail in $\mu$ implies a smaller or equal $\beta_0$,
and similar for left tails and $\alpha_0$, the values of
$\alpha_0,\beta_0$ can be gleaned for many other distributions using
the results above. For example, if $\mu\sim\text{Exp}(\lambda)$
then its left tail behaves like a bounded random variable and so
$\alpha_0=-\infty$, and its right tail is heavier than that of a
Normal random variable, and so $\beta_0=0$. The final two distributions
serve as examples of unbounded distributions for which
$\beta_0\in(0,\infty]$. Similar constructions yield
$\alpha_0\in[-\infty,0)$ and so using a combination of these
distributions with others, any pair $(\alpha_0,\beta_0)$ is attained
by some distribution $\mu$.

\section{Estimating Loynes' Exponent}\label{loynessec}
\subsection{A LDP for $\{\estLo\}$}
Consider Loynes' exponent, \eqref{eq:loynes}:
\begin{align*}
\loynes=\sup\{\theta:\Lambda(\theta)\leq 0\}=\sup\{\theta:M(\theta)\leq 1\}\in[0,\infty].
\end{align*}
If we decide to estimate Loynes' exponent, we might do so using $\estM$ in the following way:
\begin{align*}
\loynes_n=\sup\{\theta:\estM(\theta)\leq 1\}.
\end{align*}
Although this mapping from $\MSpacesmall$ to $[0,\infty]$ is not
continuous (consider the sequence of MGFs $f_n(\theta)=e^{\theta/n}$),
we can still use the LDP of $\{\estM\}$ to construct a LDP for
$\{\estLo\}$ with rate function $\ILo$ using Garcia's extension of
the contraction principle \cite{Garcia04}. Assuming $P(X_1\in(-r,r))=0$
for some $r>0$, as in \cite{Duffy11}, makes proof immediate by an
application of Puhalskii's extension of the contraction principle
\cite{Puhalskii91}[Theorem 2.2]. Here we will deal with the general
case. Throughout this section we shall assume that $P(X_1>0)$ and
$P(X_1<0)\in(0,1)$, as otherwise the proof of the LDP is trivial,
and the characterization of the rate function is as stated in Theorem
\ref{thm:loynes}. We also let $x$ be the argument of the functions
in $\MSpacesmall$ so as not to confuse $\loynes$ and $\estLo$ with
$\theta$.

The LDP for Loynes' exponent is proved directly using Garcia's
extension of the contraction principle \cite{Garcia04}, the relevant
part of which is recapitulated below.
\begin{theorem}[Garcia \cite{Garcia04}, Theorem 1]\label{Garcia}
Assume $\Omega\xrightarrow{X_n}\mathcal{X}\xrightarrow{G}\mathcal{Y}$, $\mathcal{X}$, $\mathcal{Y}$ are metric spaces, and $\{X_n\}$ satisfies the large deviation principle with good rate function $I^\#$. Define $G^x=\{y\in\mathcal{Y}|(\:\exists\: x_n\rightarrow x)G(x_n)\rightarrow y)\}$. If for every $x$ with $I^\#(x)<\infty$, $G^x$ satisfies 
\begin{enumerate}
\item Every sequence converging to $x$ has a subsequence along which the function $G$ converges.
\item For every $y\in G^x$ there is a sequence $\{x_n\}$ converging to $x$ such that $G(x_n)\rightarrow y$, $G$ is continuous at $x_n$ and $I^\#(x_n)\rightarrow I^\#(x)$.
\end{enumerate}
Then $\{G(X_n)\}$ satisfies the large deviation principle with good rate function $I$ defined by 
\begin{align*}
I(y)=\inf\{I^\#(x):y\in G^x\}.
\end{align*}
\end{theorem}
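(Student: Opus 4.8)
The plan is to derive the two LDP bounds for $\{G(X_n)\}$ directly from those for $\{X_n\}$, routing the discontinuity of $G$ through the limit sets $G^x$, and then to verify separately that the candidate $I(y)=\inf\{I^\#(x):y\in G^x\}$ is a good rate function. No auxiliary exponential tightness is needed, since $\{X_n\}$ is already assumed to satisfy the full LDP with a good rate function.

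For the upper bound, fix a closed $F\subseteq\mathcal{Y}$. Since $\{G(X_n)\in F\}=\{X_n\in G^{-1}(F)\}\subseteq\{X_n\in\overline{G^{-1}(F)}\}$, the LDP upper bound for $\{X_n\}$ on the closed set $\overline{G^{-1}(F)}$ gives $\limsup_n\frac1n\log P(G(X_n)\in F)\leq-\inf_{x\in\overline{G^{-1}(F)}}I^\#(x)$, so it suffices to show $\inf_{x\in\overline{G^{-1}(F)}}I^\#(x)\geq\inf_{y\in F}I(y)$. Take any $x\in\overline{G^{-1}(F)}$ with $I^\#(x)<\infty$ (if there is none, the inequality is vacuous), choose $x_n\to x$ with $G(x_n)\in F$; by hypothesis (1) some subsequence satisfies $G(x_{n_k})\to y$, and closedness of $F$ forces $y\in F$, while $y\in G^x$ by definition. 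Hence $\inf_F I\leq I(y)\leq I^\#(x)$, and taking the infimum over such $x$ closes the gap. For the lower bound, fix an open $U\subseteq\mathcal{Y}$ and $y\in U$ with $I(y)<\infty$; given $\epsilon>0$ pick $x$ with $y\in G^x$ and $I^\#(x)<I(y)+\epsilon$. Hypothesis (2) supplies $x_m\to x$ with $G(x_m)\to y$, $G$ continuous at each $x_m$, and $I^\#(x_m)\to I^\#(x)$. For $m$ large, $G(x_m)\in U$, and continuity of $G$ at $x_m$ yields an open $N_m\ni x_m$ with $G(N_m)\subseteq U$, so the LDP lower bound for $\{X_n\}$ gives $\liminf_n\frac1n\log P(G(X_n)\in U)\geq\liminf_n\frac1n\log P(X_n\in N_m)\geq-I^\#(x_m)$. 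Letting $m\to\infty$ and then $\epsilon\to0$ yields $\liminf_n\frac1n\log P(G(X_n)\in U)\geq-I(y)$, and taking the supremum over $y\in U$ finishes the bound.

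It remains to show that $I$ is a good rate function, and this is the step I expect to be the main obstacle, since it forces one to combine the goodness of $I^\#$ with the structure of $G^x$ via repeated subsequence extractions. Nonnegativity is immediate, and lower semicontinuity follows once each level set $\{y:I(y)\leq\alpha\}$, $\alpha<\infty$, is shown to be compact. Given a sequence $\{y_k\}$ in this set, choose $x_k$ with $y_k\in G^{x_k}$ and $I^\#(x_k)<\alpha+1/k$; by goodness of $I^\#$ a subsequence satisfies $x_{k_j}\to x_*$, and lower semicontinuity of $I^\#$ gives $I^\#(x_*)\leq\alpha<\infty$. Unwinding $y_{k_j}\in G^{x_{k_j}}$, select $z_j$ with $d(z_j,x_{k_j})<1/j$ and $d(G(z_j),y_{k_j})<1/j$, so that $z_j\to x_*$; applying hypothesis (1) at $x_*$ — legitimate precisely because $I^\#(x_*)<\infty$ — produces a further subsequence along which $G(z_j)$ converges to some $y_*$, whence $y_{k_j}\to y_*$ as well, $y_*\in G^{x_*}$, and $I(y_*)\leq I^\#(x_*)\leq\alpha$. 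Thus $y_*$ lies in the level set and is a subsequential limit of $\{y_k\}$, giving sequential --- hence, in the metric space $\mathcal{Y}$, topological --- compactness. The delicate bookkeeping throughout is to ensure hypothesis (1) is invoked only at points of finite $I^\#$ and to track the nested subsequences so that the approximating points $z_j$ genuinely converge to $x_*$; once that is done, the three ingredients (closure of $G^{-1}(F)$ plus (1) for the upper bound, (2) plus continuity of $G$ for the lower bound, and the diagonal argument for goodness) assemble into the full statement.
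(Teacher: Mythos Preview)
Your argument is correct. The three ingredients --- pulling back through $\overline{G^{-1}(F)}$ and using hypothesis (1) for the upper bound, using hypothesis (2) to find continuity points with controlled rate for the lower bound, and the diagonal/subsequence extraction for compactness of level sets --- are exactly what is needed, and your bookkeeping is clean (in particular you are careful to invoke (1) only at points where $I^\#$ is finite).

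However, there is nothing to compare against in this paper: Theorem~\ref{Garcia} is not proved here. It is quoted verbatim as an external result from Garcia~\cite{Garcia04} and used as a black box in the proof of Theorem~\ref{thm:loynes}. The paper's contribution is to \emph{verify} the hypotheses (1) and (2) for the specific map $G$ taking an MGF to its Loynes' exponent, not to reprove Garcia's theorem. So your proposal is a correct reconstruction of Garcia's original argument, but it is simply absent from --- not different from --- the paper under review.
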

Here, $G$ is the Loynes' exponent mapping, $\mathcal{X}=\MSpacesmall$
and $\mathcal{Y}=[0,\infty]$. With this result we are ready to prove
Theorem \ref{thm:loynes}; see Section \ref{sec:proofs}.

Note in the statement of Theorem \ref{thm:loynes} that $C_x$ is
closed (and therefore compact in $\MSpacebig$) for all $x\in[0,\infty]$.
Indeed, for each $x\in[0,\infty]$, $C_x$ is the closure of the set
$\{f:\sup\{y:f(y)\leq 1\}=x\}$, inverse image of $x$ under the
Loynes' exponent mapping. For $x=\infty$ this set is closed, but
in general $C_x=\{f:\sup\{y:f(y)\leq 1\}=x\}\cup A_{\delta_0}$,
where $A_{\delta_0}=\{f:f(x)\in\{1,\infty\}\text{ for all }x\}$,
the set containing the MGF of the Dirac measure $\delta_0$, and all
functions mimicking it. This characterisation of $C_x$ is useful
as it helps us to prove certain properties of $\ILo$.

\subsection{Properties of $\ILo$}
It should be noted that a weak law can be proven for $\{\estLo\}$
without applying Theorem \ref{thm:loynes}, by showing that $
P(\estLo\in\cdot)$ is eventually concentrated on any set of the
form $[0,a)$ for any $a>\loynes_\mu$ or $(b,\infty]$ for
$b<\loynes_\mu$, and deducing concentration on their intersection.
Interestingly this is a necessary deduction, as the rate function
$\ILo$ may not have a unique zero. Consider the Exp$(\lambda_1,\lambda_2)$
distribution with $\lambda_1,\lambda_2>0$, defined by the measure
\begin{align*}
\mu(dx)=\begin{cases}
	\displaystyle \frac{\lambda_1}{2}e^{\lambda_1 x}dx &x<0 \\
	\displaystyle \frac{\lambda_2}{2}e^{-\lambda_2 x}dx &x\geq 0.
	\end{cases}
\end{align*}
This distribution has mean $1/2(1/\lambda_2-1/\lambda_1)$, and so
$\loynes_\mu>0$ if $\lambda_2>\lambda_1$. Also $\H(\nu|\mu)<\infty$
for any $\nu\sim$Exp$(\gamma_1,\gamma_2)$, and $f_\nu$ is finite
on $(-\gamma_1,\gamma_2)$. So by Theorem \ref{thm:MLDP}(c) any
$f$ that mimics $f_\mu$ satisfies $\IM(f)=0$, and so $\ILo(x)=0$
for all $x<\theta_\mu$. Note however, that we cannot have $\ILo(x)=0$
for $x>\theta_\mu$.

\begin{lemma}[Positive on $(\loynes_\mu,\infty{]}$]
\label{lemma:positive}
$\ILo(x)>0$ for all $x>\loynes_\mu$.
\end{lemma}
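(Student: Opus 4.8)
The plan is to use the LDP for $\{\estM\}$ (Theorem \ref{thm:MLDPnonchar}) together with the variational formula for $\ILo$ in Theorem \ref{thm:loynes}, and show that for $x > \loynes_\mu$ the infimum defining $\ILo(x)$ is over a set on which $\IM$ is bounded away from $0$. Fix $x > \loynes_\mu$. We must show $\inf_{f \in C_x} \IM(f) > 0$. Recall from the discussion following Theorem \ref{thm:loynes} that $C_x = \{f : \sup\{y : f(y) \le 1\} = x\} \cup A_{\delta_0}$, where $A_{\delta_0}$ consists of the MGF of $\delta_0$ and all functions mimicking it. The first observation is that for every $g \in A_{\delta_0}$ we have $\IM(g) > 0$: indeed $\delta_0 \ne \mu$ (since $P(X_1 > 0)$ and $P(X_1 < 0)$ are both positive), so $\H(\delta_0 | \mu) > 0$, and by Proposition \ref{thm:I=H} and Proposition \ref{thm:Iformimics}(a) (or Lemma \ref{lemma:mimics}(\ref{lemma:valuesforspike})) either $\IM(g) = \H(\delta_0|\mu) > 0$ on that branch or $\IM(g) = \infty$; in the spike case $\IM \in \{0,\infty\}$, and it cannot be $0$ because $\IM(f_{\delta_0}) = 0$ would force $\alpha_0 = \beta_0 = 0$ by Corollary \ref{cor:mineffdom}(3), contradicting $\loynes_\mu > 0$ (which forces the effective domain of $f_\mu$, and hence $[\alpha_0,\beta_0]$, to extend to the right of $0$). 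So $\IM$ is bounded below by $\H(\delta_0|\mu) > 0$ on $A_{\delta_0}$.

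The real work is the other piece: functions $f$ with $\sup\{y : f(y) \le 1\} = x$. Since $x > \loynes_\mu$, any such $f$ satisfies $f(x) \le 1 < f_\mu(x)$ (using $\loynes_\mu = \sup\{\theta : f_\mu(\theta) \le 1\}$, so $f_\mu(x) > 1$), and moreover $f \ne f_\mu$ on a neighbourhood of $x$. I would argue by contradiction using the goodness of $\IM$: suppose there is a sequence $f_k \in C_x$ with $\IM(f_k) \to 0$. Because $\{f : \IM(f) \le 1\}$ is compact (good rate function) and $\taw$ is metrizable on these spaces, pass to a subsequence $f_k \to f_\infty$ in $\taw$ with $\IM(f_\infty) = 0$ by lower semicontinuity. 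By the characterization of $\IM$ in Theorem \ref{thm:MLDP}, $\IM(f_\infty) = 0$ forces $f_\infty = f_\mu$ (when $\loynes_\mu > 0$ the zero set of $\IM$ is $\{f_\mu\}$ together possibly with functions mimicking $f_\mu$ whose effective-domain closure contains $[\alpha_0,\beta_0]$ — but all such functions agree with $f_\mu$ wherever they are finite, in particular near $x$ if $x$ is interior to their domain). The point $x$ lies in the closure of the effective domain of $f_\mu$ (since $f_\mu(x) < \infty$ is not guaranteed, but $f_\mu(x) > 1$ still makes sense with value possibly $+\infty$); in either case $f_\mu(x) > 1$ strictly, while $\taw$-convergence $f_k \to f_\mu$ together with $f_k(x) \le 1$ for all $k$ will be shown to contradict $f_\mu(x) > 1$ — this uses that $\taw$-limits cannot create a sublevel crossing at a point where the limit strictly exceeds the level, which follows from the epigraph-distance definition \eqref{eq:topology} evaluated at the point $(x, 1) \in \R^2$: $d((x,1),\epi(f_k)) = 0$ for all $k$ but $d((x,1),\epi(f_\mu)) > 0$.

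I expect the main obstacle to be the last step — ruling out that a $\taw$-limit of functions each satisfying $f_k(x) \le 1$ can equal $f_\mu$ when $f_\mu(x) > 1$ — because $\taw$-convergence does not imply pointwise convergence (as the paper repeatedly stresses), so one must instead work directly with epigraph distances at the single test point $(x,1)$, and also handle the case where $\loynes_\mu = x$ is approached, where the sublevel set $\{y : f_\mu(y) \le 1\}$ has $x$ on its boundary. A secondary subtlety is the possibility that $\IM$ has a non-unique zero (functions mimicking $f_\mu$): here one invokes Corollary \ref{cor:mineffdom}(1), which says every zero of $\IM$ mimicking $f_\mu$ agrees with $f_\mu$ on the interior of its effective domain, and since $\loynes_\mu < x$ and these mimics must have effective-domain closure containing $[\alpha_0,\beta_0] \ni \loynes_\mu$, such a mimic still has $f(x) > 1$ (or $= \infty$), closing the argument. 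The remaining estimates — metrizability of $\taw$ on $\MSpacesmall$, and that $d((x,1),\epi(f_k)) = 0$ exactly when $f_k(x) \le 1$ — are routine and I would not belabour them.
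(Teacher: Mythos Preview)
Your overall strategy is sound and reaches the right conclusion, but the paper's proof is far more direct, and your argument contains two incorrect side-claims which, fortunately, your main argument does not actually need.

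The paper's proof is two lines: since $C_x$ is closed (and contained in the compact set $\{f : f(0) \leq 1\}$ from Proposition~\ref{prop:Mcomp}), the infimum $\ILo(x) = \inf_{f \in C_x} \IM(f)$ is \emph{attained} at some $f \in C_x$. Every element of $C_x$ satisfies $f(x) \leq 1$ by definition, while any zero of $\IM$ either equals $f_\mu$ or mimics it, and in either case takes the value $f_\mu(x) > 1$ (or $\infty$) at $x$. Hence $\IM(f) > 0$. Your route --- setting up a sequence $f_k \in C_x$ with $\IM(f_k) \to 0$, extracting a $\taw$-convergent subsequence via goodness, and deriving a contradiction from the epigraph distance at the test point $(x,1)$ --- is the same idea with the attainment of the infimum replaced by an explicit limiting argument. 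The observation that $d((x,1),\epi(f_k)) = 0$ while $d((x,1),\epi(f_\infty)) > 0$ is exactly right and does the job.

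Two of your side-claims are wrong, however. First, you assert that $\loynes_\mu > 0$ forces $[\alpha_0,\beta_0]$ to extend to the right of $0$. This is false: the $\text{Exp}(\lambda_1,\lambda_2)$ example discussed immediately after Theorem~\ref{thm:loynes} has $\loynes_\mu > 0$ yet $\alpha_0 = \beta_0 = 0$ (exponential tails give $e^{\gamma X_1}$ a power-law tail, so Cram\'er's condition fails for every $\gamma \ne 0$). Second, and for the same reason, your claim that mimics of $f_\mu$ with $\IM = 0$ must have effective-domain closure containing $\loynes_\mu$ is unfounded. Neither claim is needed: the spike function is never in $C_x$ for $x > 0$ (it equals $\infty$ there, not $\leq 1$), so you need not rule it out separately; and any function that equals or mimics $f_\mu$ automatically has value $f_\mu(x) > 1$ or $\infty$ at $x$ simply because it agrees with $f_\mu$ wherever it is finite --- no appeal to $[\alpha_0,\beta_0]$ is required. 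Once you drop these detours, your argument collapses to the paper's.
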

\begin{proof}
See Section \ref{sec:proofs}.
\end{proof}
A necessary and sufficient condition for a unique zero is stated below.
\begin{proposition}[Conditions for unique zero]
\label{prop:loynesuniquezero}
$\ILo(x)>0$ for $x<\theta_\mu$ if and only if the random walk
associated with $e^{yX_1}$ satisfies the conditions of Crame\'r's
Theorem for some $y>x$. Therefore $\ILo$ has a unique zero if and
only if the random walk associated with $e^{yX_1}$ obeys Cram\'er's
Theorem for all $y\in (0,\loynes_\mu)$.
\end{proposition}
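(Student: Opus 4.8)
The plan is to reduce both claims to the ``five equivalences'' of Lemma~\ref{thm:5equiv} together with the characterization of $\IM$ for mimics in Proposition~\ref{thm:Iformimics}, exploiting the explicit form of $C_x$ described just before this Proposition. Fix $x\in(0,\loynes_\mu)$. By Theorem~\ref{thm:loynes}, $\ILo(x)=\inf_{f\in C_x}\IM(f)$ with $C_x=\{f:f(x)=1,\text{ or }f(x)<1\text{ and }f(y)=\infty\text{ for all }y>x\}$. I would split $C_x$ according to the closure of the effective domain: either $\overline{\effdomf}\subseteq(-\infty,x]$, or $\overline{\effdomf}=[\alpha,\beta]$ with $\beta>x$, in which case $f$ must satisfy $f(x)=1$. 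In the first case, by Proposition~\ref{thm:I=H} together with Lemma~\ref{thm:5equiv} (specifically $(2)\Leftrightarrow(1)$ applied with $\beta$ ranging over values $\le x$), $\IM(f)=\infty$ unless the random walk associated with $e^{\gamma X_1}$ satisfies Cram\'er's theorem for some $\gamma>x$; in the second case I would produce a minimizing sequence.

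Concretely, for the ``if'' direction, suppose the random walk associated with $e^{yX_1}$ obeys Cram\'er's theorem for some $y>x$. Then there is a MGF $g=f_\nu$ with $\overline{\effdomf_\nu}\ni$ a neighbourhood of $y$ and $\H(\nu|\mu)<\infty$, hence $\IM(f_\nu)<\infty$; shifting $\nu$ appropriately (a tilt that slides the Loynes crossing to exactly $x$, i.e.\ choosing a measure $\nu$ with $f_\nu(x)=1$ and $\H(\nu|\mu)<\infty$) gives an element of $C_x$ of finite rate, and one still must rule out rate $0$. Rate $0$ would force, via the characterization of the zero set of $\IM$ (Corollary~\ref{cor:mineffdom}(3) and Proposition~\ref{thm:I=H}), that $f_\mu$ itself lies in $C_x$, i.e.\ $f_\mu(x)=1$ or $f_\mu(x)<1$ with $f_\mu\equiv\infty$ on $(x,\infty)$; the first contradicts $x<\loynes_\mu$ (strict) and the second is impossible since $f_\mu$ is finite past $x$ when $x<\loynes_\mu$. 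Hence $\ILo(x)>0$. For the ``only if'' direction I would argue contrapositively: if the random walk associated with $e^{yX_1}$ fails Cram\'er's theorem for every $y>x$, then for every $[\alpha,\beta]$ with $\beta>x$ Lemma~\ref{thm:5equiv} gives $\IM(g)=\infty$ for all $g\in\effdom$, so any $f\in C_x$ with finite rate must have $\overline{\effdomf}\subseteq(-\infty,x]$; but then $f$ mimics (or equals) some $f_\nu$ with $\overline{\effdomf_{\nu}}\subseteq(-\infty,x]$, and by $(4)\Rightarrow(5)$ of Lemma~\ref{thm:5equiv} together with Proposition~\ref{thm:Iformimics}(b) we again get $\IM(f)=\infty$ unless some MGF on such a domain has finite relative entropy --- which is exactly a Cram\'er condition at some $\gamma<0<\,\cdot$, handled by noting $x>0$ so the relevant tilts are by $\gamma$ with the crossing at $x$. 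Carefully, the cleanest route is: finite $\IM(f)$ for some $f\in C_x$ forces, by the above case analysis, the random walk for $e^{yX_1}$ to satisfy Cram\'er for some $y>x$, and then the zero-set argument shows the infimum is not attained at $0$. The final sentence of the Proposition then follows immediately: $\ILo$ has a unique zero iff $\ILo(x)>0$ for all $x\ne\loynes_\mu$, which by Lemma~\ref{lemma:positive} already holds for $x>\loynes_\mu$, so the condition is exactly that $e^{yX_1}$ obeys Cram\'er's theorem for all $y\in(0,\loynes_\mu)$.

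The main obstacle I anticipate is bookkeeping around the two qualitatively different members of $C_x$ (the ``$f(x)=1$'' branch versus the ``$f(x)<1$, truncated'' branch) and making sure that in the ``if'' direction one can actually find an element of $C_x$ with \emph{finite} rate: this requires constructing a measure $\nu$ with $f_\nu(x)=1$, $f_\nu$ finite on an open interval containing a point $>x$, and $\H(\nu|\mu)<\infty$, starting from the hypothesized Cram\'er condition for $e^{yX_1}$. The construction is a tilting argument --- replace $\mu$ by $d\nu \propto e^{cX}d\mu$ and choose $c$ so the MGF equals $1$ at $x$ --- but one has to check the resulting $\nu$ is a probability measure with finite entropy and appropriate effective domain, using that the Cram\'er condition gives an exponential moment past $y>x$. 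A secondary subtlety is the strictness of the inequality (ruling out $\ILo(x)=0$), which rests on translating ``$x<\loynes_\mu$'' into ``$f_\mu\notin C_x$'' via the precise closed-form of $C_x$ and the fact, recorded before Proposition~\ref{thm:Iformimics}, that the zero set of $\IM$ consists of $f_\mu$ and the functions mimicking it; since every function mimicking $f_\mu$ has the same value $1$ at every point of $\overline{\effdomf_\mu}$, none of them can lie in the ``$f(x)<1$'' branch either, closing the argument.
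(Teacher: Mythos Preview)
Your plan has a genuine gap in both directions, and the common root is a mischaracterization of the zero set of $\IM$.

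You claim at the end that ``every function mimicking $f_\mu$ has the same value $1$ at every point of $\overline{\effdomf_\mu}$'', and hence no mimic of $f_\mu$ can lie in the $f(x)<1$ branch of $C_x$. This is false: a mimic $f$ of $f_\mu$ satisfies $f(\theta)=f_\mu(\theta)$ on $\effdomf$, not $f(\theta)=1$. In particular, the function $f$ that mimics $f_\mu$ with $\overline{\effdomf}=\overline{\mathcal{D}_{f_\mu}}\cap(-\infty,x]$ has $f(x)=f_\mu(x)<1$ and $f(y)=\infty$ for $y>x$, so it \emph{does} lie in $C_x$. Moreover the zero set of $\IM$ is not ``$f_\mu$ together with all of its mimics'': by Corollary~\ref{cor:mineffdom}(1) a mimic $f$ of $f_\mu$ has $\IM(f)=0$ exactly when $\overline{\effdomf}\supseteq[\alpha_0,\beta_0]$, and $\IM(f)=\infty$ otherwise. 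So whether this specific truncated mimic has rate $0$ or $\infty$ is governed by whether $\beta_0\le x$, i.e.\ by whether the Cram\'er hypothesis fails for all $y>x$. This is precisely where the hypothesis must enter, and your argument never uses it in this way; the tilting construction you describe only shows $\ILo(x)<\infty$, which is already known from Theorem~\ref{thm:propertiesofILo}(b) and is irrelevant to showing $\ILo(x)>0$.

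Your ``only if'' direction is also inverted. The contrapositive of ``$\ILo(x)>0$ only if Cram\'er holds for some $y>x$'' is ``Cram\'er fails for all $y>x$ $\Rightarrow$ $\ILo(x)=0$'', so you must \emph{exhibit} an $f\in C_x$ with $\IM(f)=0$, not argue that every $f\in C_x$ has infinite rate. (Your claim that $\beta>x$ forces condition~(1) of Lemma~\ref{thm:5equiv} is also wrong: Cram\'er could still hold for some $\gamma<\alpha$.) The paper's proof fixes all of this by working with the single test function above: since it lies in $C_x$ and $\IM(f)\in\{0,\infty\}$ by Lemma~\ref{lemma:mimics}, one has $\ILo(x)=0$ iff $\IM(f)=0$; then Theorem~\ref{thm:MLDP}(c) and Lemma~\ref{thm:5equiv} translate $\IM(f)\in\{0,\infty\}$ directly into the Cram\'er condition for some $y>x$, using also that any $y$ for which $e^{yX_1}$ satisfies Cram\'er must lie in $\mathcal{D}_{f_\mu}$.
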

\begin{proof}
See Section \ref{sec:proofs}.
\end{proof}
Although the condition in Proposition \ref{prop:loynesuniquezero} may seem strict, it is true for
any distribution bounded above. \\ \\
Without too much analysis, we can prove a number of properties of $\ILo$. 
\begin{theorem}[Properties of $\ILo$]
\label{thm:propertiesofILo}
$\:$\\
\begin{enumerate}[(a)]
\item $\ILo$ is increasing on $[\theta_\mu,\infty]$ and decreasing on $[0,\theta_\mu]$, 
\item $\ILo$ is finite everywhere and therefore bounded,
\item For all $x\in(\theta_\mu,\infty)$, $\ILo(x)=\IM(f)$ for some $f$ satisfying $f(x)=1$,
\item If $y$ is the smallest value for which $\ILo(y)=0$, and moreover $y>0$ then $\ILo$ is continuous at all $x\neq y$. If $y=0$ then $\ILo$ is continuous everywhere.
\end{enumerate}
\end{theorem}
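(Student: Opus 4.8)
The plan is to prove each of the four properties in turn, drawing on the explicit formula $\ILo(x)=\inf_{f\in C_x}\IM(f)$ from Theorem~\ref{thm:loynes}, the convexity of $\IM$ (Proposition~\ref{lemma:convex}), the characterisation of $\IM$ on MGFs and their mimics (Proposition~\ref{thm:I=H}, Proposition~\ref{thm:Iformimics}), and the just-established Lemma~\ref{lemma:positive} and Proposition~\ref{prop:loynesuniquezero}.

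For part (a), the monotonicity, I would argue as follows. Take $\theta_\mu \le x_1 \le x_2$. Given any $f\in C_{x_2}$ with $\IM(f)<\infty$, I want to produce $g\in C_{x_1}$ with $\IM(g)\le\IM(f)$; this gives $\ILo(x_1)\le\ILo(x_2)$. The natural candidate comes from ``shifting'' the underlying measure: if $f=f_\nu$ (or mimics $f_\nu$) then replacing $\nu$ by the pushforward under $x\mapsto x+(x_1-x_2)$ moves the Loynes' exponent and changes $\H(\nu|\mu)$ in a controllable way — but since this would generally increase relative entropy, the cleaner route is to use the structure of $C_x$ directly. Recall $C_x=\{f:f(x)=1\}\cup\{f:f(x)<1,\ f\equiv\infty \text{ on }(x,\infty)\}$ for finite $x>0$; since $M(\theta)=f(\theta)$ is convex with $f(0)=1$, a function attaining $f(x)=1$ at the larger point $x_2\ge\theta_\mu$ can be truncated at $x_1$ (set it to $+\infty$ for $\theta>x_1$) to land in $C_{x_1}$, and truncation only enlarges the set of measures it mimics, hence can only decrease $\IM$ by Proposition~\ref{thm:Iformimics}(b) and the infimum in Theorem~\ref{thm:loynes}. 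The decreasing case on $[0,\theta_\mu]$ is symmetric, reflecting about the origin. The monotonicity on the two branches, combined with Lemma~\ref{lemma:positive} (positivity strictly above $\theta_\mu$), pins down the qualitative shape.

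For part (b), finiteness everywhere: since $\ILo(x)=\inf_{f\in C_x}\IM(f)$, it suffices to exhibit, for each $x\in[0,\infty]$, one $f\in C_x$ with $\IM(f)<\infty$. For $x\in(0,\infty)$ take a suitable exponentially-tilted-and-truncated perturbation of $\mu$ — concretely a measure $\nu$ with density proportional to $e^{-cx}$ times that of $\mu$ (or a two-sided exponential as in the Exp$(\lambda_1,\lambda_2)$ example) chosen so that its MGF hits $1$ at the point $x$; such $\nu$ has $\H(\nu|\mu)<\infty$ whenever $\mu$ has the right local integrability, which holds for all $\mu$ after truncating the support to make $f_\nu$ finite on an interval and then invoking Proposition~\ref{thm:I=H}. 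For $x=0$ and $x=\infty$ one uses the explicit $C_0,C_\infty$: e.g. $f_\mu$ itself lies in $C_{\theta_\mu}$ and its truncations give finite-rate elements of $C_0$ and $C_\infty$ by the same argument. Boundedness is then immediate from finiteness together with the monotone, eventually-flat shape from (a) (both branches are monotone and the value at the endpoints $0,\infty$ is finite, so the sup is attained).

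For part (c), that $\ILo(x)$ for $x>\theta_\mu$ is realised by some $f$ with $f(x)=1$: the infimum in Theorem~\ref{thm:loynes} is over the compact set $C_x$ (noted right after the statement of Theorem~\ref{thm:loynes}) and $\IM$ is lower semicontinuous as a rate function, so the infimum is attained at some $f^\*\in C_x$. It remains to rule out that the minimiser has $f^\*(x)<1$ with $f^\*\equiv\infty$ on $(x,\infty)$. Here I would use part (a)'s truncation idea in reverse: if $f^\*(x)<1$, then because $f^\*$ is a convex MGF-type function with $f^\*(0)=1$ and $f^\*$ drops below $1$ at $x$, there is a point $x'\in(\theta_\mu,x)$ — or a slight enlargement of the effective domain past $x$ — producing a competitor still in $C_x$ (or in $C_{x'}$ with smaller rate), contradicting minimality unless $f^\*(x)=1$; alternatively, since $x>\theta_\mu$ forces $\ILo(x)>0$ by Lemma~\ref{lemma:positive}, the ``$f(x)<1$'' branch of $C_x$ would require a measure whose MGF is already $\le 1$ at $x>\theta_\mu$, which by Proposition~\ref{prop:loynesuniquezero}-type reasoning cannot have strictly smaller rate than the boundary case $f(x)=1$. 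I expect this disjunction — showing the minimiser sits on the ``$f(x)=1$'' piece rather than the ``$f(x)<1$, truncated'' piece — to be the main obstacle, and it is the step where convexity of both $\IM$ and of the admissible $f$ must be used carefully.

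For part (d), continuity away from the zero set: away from its zeros, $\ILo$ is a monotone (part (a)), finite (part (b)) function on each of the intervals $[0,\theta_\mu]$ and $[\theta_\mu,\infty]$, so the only possible discontinuities are jumps. A monotone function on an interval has at most countably many jumps, but I would instead argue directly that $\ILo$ is both lower and upper semicontinuous at each $x$ not equal to the smallest zero $y$. Lower semicontinuity of $\ILo$ on $[0,\infty]$ follows from the general fact that $y\mapsto\inf_{f\in C_y}\IM(f)$ inherits lower semicontinuity when the $C_y$ vary upper-hemicontinuously and $\IM$ is lsc with compact level sets — or, more concretely, from the LDP for $\{\estLo\}$ itself, since any large-deviation rate function is automatically lsc. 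For upper semicontinuity away from $y$: given $x\ne y$ one constructs, from a near-optimal $f\in C_x$, a nearby $f'\in C_{x'}$ for $x'$ close to $x$ by a small perturbation of the underlying measure (continuity of $\nu\mapsto\H(\nu|\mu)$ along such perturbations, as exploited in Proposition~\ref{thm:I=H}), giving $\ILo(x')\le\IM(f)+o(1)$. The point $x=y$ must be excluded because there $\ILo$ can jump down to $0$ discontinuously (as the Exp$(\lambda_1,\lambda_2)$ example before Lemma~\ref{lemma:positive} shows for $y=\theta_\mu>0$); when $y=0$ the left branch is degenerate and no such jump occurs, so continuity is everywhere. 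The perturbation/semicontinuity bookkeeping in this last part, and the part (c) disjunction, are where the real work lies; the rest is assembling the already-established pieces.
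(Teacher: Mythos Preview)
Your proposal has a genuine error in part (a) and is missing the central technique that drives the paper's proof of all four parts.

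In (a) you propose to truncate $f\in C_{x_2}$ at $x_1$ and claim ``truncation only enlarges the set of measures it mimics, hence can only decrease $\IM$ by Proposition~\ref{thm:Iformimics}(b)''. This is backwards. If $f=f_\nu$ with $\IM(f)=\H(\nu|\mu)<\infty$ and you truncate to $\tilde f$ with $\overline{\mathcal{D}_{\tilde f}}=[\alpha,x_1]$, then $\tilde f$ still mimics the \emph{same} $f_\nu$ (the measure is uniquely determined on any open interval), and Proposition~\ref{thm:Iformimics}(b) gives $\IM(\tilde f)\in\{\IM(f_\nu),\infty\}$. So truncation never decreases $\IM$; it either preserves it or sends it to $\infty$ (precisely when $[\alpha_0,\beta_0]\not\subset[\alpha,x_1]$, by Corollary~\ref{cor:mineffdom}). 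Your inequality goes the wrong way and the argument collapses.

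The tool you are missing, and which the paper uses repeatedly in (a), (c), and (d), is \emph{convex combination with cheap approximations of $f_\mu$}. Since $\IM(f_{\mu_n})\to\IM(f_\mu)=0$ (with $\mu_n$ the restriction of $\mu$ to $[-n,n]$ or to a half-line), and since $\IM$ is convex, one can mix any $g$ with $f_{\mu_n}$ at essentially no cost. For (a) the paper first shows, via such a mixture, that for $x>\theta_\mu$ one has $\ILo(x)=\inf_{f:f(x)\le 1}\IM(f)$: given any $g$ with $g(x)\le 1$, pick $n$ with $f_{\mu_n}(x)>1$ and solve $a g(x)+(1-a)f_{\mu_n}(x)=1$; then $\ILo(x)\le a\IM(g)+(1-a)\IM(f_{\mu_n})\le\IM(g)+\IM(f_{\mu_n})\to\IM(g)$. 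Monotonicity is then immediate from the nesting $\{f:f(y)\le 1\}\subset\{f:f(x)\le 1\}$ for $x<y$. The decreasing half uses the symmetric identity $\ILo(x)=\inf_{f:f(x)\ge 1}\IM(f)$ for $x<\theta_\mu$ with $\ILo(x)>0$. For (c), if the minimiser had $f(x)<1$, mix it with $f_{\mu_n}$ chosen so that $f_{\mu_n}(x)>1$ \emph{and} $\IM(f_{\mu_n})<\IM(f)$ (possible since $\IM(f)>0$ by Lemma~\ref{lemma:positive}); the convex combination hitting $1$ at $x$ lies in $C_x$ with strictly smaller rate, a contradiction. For (d), the one-sided upper-semicontinuity is obtained by mixing the minimiser with $f_{\mu_-}$ or $f_{\mu_+}$ and tracking the mixing coefficient $a_\epsilon\to 1$. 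Your perturbation sketch in (d) is in the right spirit but not concrete; your (c) is admittedly a sketch, and the same convex-combination idea resolves it in two lines. Part (b) is fine in outline, though the paper's choice of $f_{\mu_+}\in C_0$ and $f_{\mu_-}\in C_\infty$ is simpler than your tilting construction.
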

\begin{proof}
See Section \ref{sec:proofs}.
\end{proof}
After considering the proof of Theorem \ref{thm:propertiesofILo}(d) it should be clear that the methods involved will not suffice to prove continuity at the smallest value of $x$ for which $\ILo(x)=0$ if $x>0$, as it is possible that $\ILo(x)=\IM(f)$ for some $f$ satisfying $f(x)<1$. In fact this is true if $x<\theta_\mu$. However it is easy to prove continuity for this $x$ if $f(x)=1$ or if there exists some $g\in\mathcal{D}_{[-\infty,x]}$ with $g(x)=\infty$ and $\IM(g)<\infty$, as we can then use $g$ instead of $\mup$ in the proof that $\lim_{\epsilon\downarrow 0}\ILo(x-\epsilon)\leq\ILo(x)$, because although $f(x-\epsilon)\not\rightarrow 1$, $g(x-\epsilon)\rightarrow\infty$ and so $a_\epsilon\rightarrow 1$, as required. Whether or not there exists a $\mu$ such that $\ILo$ is discontinuous at this point is not considered further here. \\

\section{Proofs}
\label{sec:proofs}
\subsection{Section 4}

\begin{proof} [{\bf Proposition \ref{prop:limitsprop}. Characterization of possible limits of MGFs in $\MSpacebig$.}]
Consider any $f\in\MSpacebig$ that is the limit of a sequence of 
MGFs $\{f_n\}\subset\BSpace$, with corresponding
distribution functions $\{F_n\}$. Since $f_n\in\BSpace$, each $F_n$
is uniquely determined. An application of the Helly Selection Principle
tells us that if we have a sequence of distribution functions
$\{F_n\}$, then a subsequence of them converge point-wise to a
function $F$. We can replace $\{F_n\}$
with this convergent subsequence and so replacing $\{f_n\}$ with
the corresponding subsequence of MGFs, we
find they still converge to $f$. $F$ can be seen to be monotonic increasing with
codomain $[0,1]$, and therefore can only have countably many
discontinuities. Its upper semi-continuous regularisation \cite{Lewis95}, denoted
$F^\diamond$, is therefore a distribution function when the sequence
of probability measures corresponding to $F_n$ is tight, i.e. when
$\sup_{x\in\R}\inf_{n}\left(F_n(x)-F_n(-x)\right)=1$.
Since $F$ and $F^\diamond$ only have a countable number
of discontinuities they are equal almost everywhere with respect
to Lebesgue measure and so this,
along with monotonicity, can be used to show that they have the
same limit as $x\rightarrow\pm\infty$. 

Here we
deal with three cases regarding the limit of the distribution
functions, which are exhaustive and completely characterise the
possible limit functions $F$ and $f$. Let $a=\lim_{x\rightarrow-\infty}F(x)$ and $b=\lim_{x\rightarrow\infty}F(x)$. It suffices to prove the following. If 
\begin{align*}
a>0 \text{ and } b<1,
\end{align*}
then $f$ is of Type 1 or 2.
If 
\begin{align*}
a=0\text{ and } b<1,
\text{ or }
a>0\text{ and } b=1,
\end{align*}
then $f$ is of Type 2. 
If 
\begin{align*}
a=0\text{ and } b=1,
\end{align*}
then $f$ is of Type 1 or 3. The proof of this final statement generalises
ideas presented in \cite{Kozakiewicz47}, which does not deal with
the case of moment generating functions that were not finite in a
neighbourhood of the origin.

In the first case, it holds that 
\begin{align*}
&\text{for all }\epsilon>0,\:x\in \R\:\text{ there exists } N_{\epsilon,\:x}\text{ such that }F_n(x)<F(x)+\epsilon\leq b+\epsilon\:\text{ for all }\:n>N_{\epsilon,\:x}.
\end{align*}
Fix $\epsilon>0$, $x\in \R$, $\theta>0$, and see that for $n>N_{\epsilon,\:x}$,
\begin{align*}
f_n(\theta)&=\int_{-\infty}^{x} e^{\theta y}dF_n(y)+\int_{x}^\infty e^{\theta y}dF_n(y)\geq \int_{x}^\infty e^{\theta y}dF_n(y)\geq e^{\theta x}(1-F_n(x))> e^{\theta x}(1-(b+\epsilon)).
\end{align*} 
This is true for any $x$, so in particular we can increase $x$
without bound and this is still true, although it does change
$N_{\epsilon,\:x}$. This gives us
$\lim_{n\rightarrow\infty}f_n(\theta)=\infty$ as we can choose
$\epsilon$ so that $1-(b+\epsilon)>0$. An analogous argument using
$a>0$ will give us the same result for $\theta<0$. Thus the $\taw$
limit of $f_n$ must also satisfy $f(\theta)=\infty$ for $\theta\neq
0$. It can be shown easily that $f(0)\leq 1$; assume that
$f(0)>1$. Then $(0,1)\not\in\epi(f)$, so that $d((0,1),\epi(f))=\delta>0$.
Then as $(0,1)\in\epi(f_n)$ for all $n$, we can show that for
$\epsilon<\delta$ and any set $B\ni(0,1)$, (\ref{eq:topology}) does
not hold for any $n$, so that we cannot have $f_n\rightarrow f$.
Therefore $f(0)\leq 1$; $f$ is of Type 2 if $f(0)<1$, and of Type
1 if $f(0)=1$.

For the second case, we can assume $b<1$ and $a=0$, as proving the other
case is analogous. In this case we still have $f(\theta)=\infty$
for $\theta>0$ using the arguments from the previous case.
Assume that $f(\theta)<\infty$ for some $\theta<0$, as otherwise
we are back to the aforementioned case of the function infinite
everywhere but at 0. We have that
\begin{align*}
&\text{ for all }\epsilon>0,\text{ there exists }x_\epsilon\text{ such that }F(x_\epsilon)>b-\epsilon, \\
&\text{ for all }\epsilon>0,\text{ there exists }N_{\epsilon}\text{ such that }F_n(x_\epsilon)>F(x_\epsilon)-\epsilon\text{ for all }n>N_{\epsilon}, \\
\Rightarrow &\text{ for all }\epsilon>0,\text{ there exists }x_\epsilon,N_{\epsilon}\text{ such that }F_n(x_\epsilon)>b-2\epsilon\text{ for all }n>N_\epsilon. 
\end{align*}
Similarly as before we have that
\begin{align*}
&\text{for all }\epsilon>0,\text{ there exists }N_{\epsilon}\text{ such that }F_n(x_\epsilon)<b+\epsilon\text{ for all }n>N_\epsilon.
\end{align*}
We can assume that $x_\epsilon\rightarrow \infty$ as $\epsilon\rightarrow 0$. Fixing $\epsilon>0$, and $\theta<0$ in the interior of $\mathcal{D}_f$, see that for $n>N_\epsilon$ where $N_\epsilon$ fits both of the above conditions,
\begin{align*}
f_n(\theta)&=\int_{-\infty}^{x_\epsilon}e^{\theta x}dF_n(x)+\int_{x_\epsilon}^{\infty}e^{\theta x}dF_n(x)\leq \int_{-\infty}^{x_\epsilon}e^{\theta x}dF_n(x)+e^{\theta x_\epsilon}(1-F_n(x_\epsilon)) \\
&<\int_{-\infty}^{x_\epsilon}e^{\theta x}dF_n(x)+e^{\theta x_\epsilon}(1-(b-2\epsilon)).
\end{align*}
For each $n$, define $X_n^*$ by the distribution function $F_n^*(x)=F_n(x)/F_n(x_\epsilon)$ for $x\leq x_\epsilon$. Then $dF_n(x)=F_n(x_\epsilon)dF_n^*(x)$ and
\begin{align*}
f_n(\theta)&< F_n(x_\epsilon) E(e^{\theta X_n^*})+e^{\theta x_\epsilon}(1-(b-2\epsilon)) \\
&< (b+\epsilon) E(e^{\theta X_n^*})+e^{\theta x_\epsilon}(1-(b-2\epsilon)).
\end{align*}
It can similarly be shown that 
\begin{align*}
f_n(\theta)
	\geq F_n(x_\epsilon) E(e^{\theta X_n^*})
	>(b-2\epsilon) E(e^{\theta X_n^*}).
\end{align*}
If we look at the point-wise limits of these functions, this gives us 
\begin{align*}
(b-2\epsilon)\limsup_{n\rightarrow\infty} E(e^{\theta X_n^*})\leq \limsup_{n\rightarrow\infty}f_n(\theta)&\leq (b+\epsilon)\limsup_{n\rightarrow\infty} E(e^{\theta X_n^*})+e^{\theta x_\epsilon}(1-(b-2\epsilon)) \\
\Rightarrow b\limsup_{\epsilon\rightarrow 0}\limsup_{n\rightarrow\infty} E(e^{\theta X_n^*})\leq \limsup_{n\rightarrow\infty}f_n(\theta)&\leq b\limsup_{\epsilon\rightarrow 0}\limsup_{n\rightarrow\infty} E(e^{\theta X_n^*}) \\
\Rightarrow \limsup_{n\rightarrow\infty}f_n(\theta)&=b\limsup_{\epsilon\rightarrow 0}\limsup_{n\rightarrow\infty} E(e^{\theta X_n^*}).
\end{align*}
The $\taw$ limit $f$ and $\{f_n\}$ are convex and continuous on the interior of $\effdomf$ and $\{\mathcal{D}_{f_n}\}$ respectively, so the point-wise limit of $f_n(\theta)$ exists and equals $f(\theta)$ on the interior of $\effdomf$, which can be shown by a minor modification of \cite{Beer93}[Lemma 7.1.2]. Therefore we have 
\begin{align*}
f(\theta)&=b\limsup_{\epsilon\rightarrow 0}\limsup_{n\rightarrow\infty} E(e^{\theta X_n^*}).
\end{align*}
As limit superiors of convex functions are convex,
$\limsup_{\epsilon\rightarrow 0}\limsup_{n\rightarrow\infty}
E(e^{\theta X_n^*})$ is a non-negative convex function for all
$\theta\in \R$ with value 1 at $\theta=0$ and finite on some interval
in $(-\infty,0)$. Therefore we know that $\lim_{\theta\uparrow
0}\limsup_{\epsilon\rightarrow 0}\limsup_{n\rightarrow\infty}
E(e^{\theta X_n^*})\leq 1$. So as $f$ is lower semi-continuous and
convex,
\begin{align*}
f(0)=\lim_{\theta\uparrow 0}f(\theta)=b\lim_{\theta\uparrow 0}\limsup_{\epsilon\rightarrow 0}\limsup_{n\rightarrow\infty} E(e^{\theta X_n^*})\leq b,
\end{align*}
so $f$ is of Type 2. 

In the third case the usc-regularisation of
$F$, $F^{\diamond}$ is a monotonic right-continuous function, also
with the above limits, and so is a distribution function. It is
from this point that we follow the work in $\cite{Kozakiewicz47}$.
As again we need not consider the case of $f$ finite only at 0, let $f$ be finite for some interval on the left of the origin. To be precise,
let $f$ be finite on $(\alpha,0]$, but not on $(-\infty,\alpha)$ (if $-\infty<\alpha$). We cannot apply \cite{Mukherjea06}[Theorem 2] as we cannot assume that $f$ is a moment generating function. We also cannot apply \cite{Kozakiewicz47}[Theorem 2(a)] because we cannot assume $f$ is finite in a neighbourhood of 0, but we do not need to. Instead,
we show that if $M(x)=\sup_n F_n(x)$, that
\begin{align}
\lim_{x\rightarrow-\infty}M(x)e^{tx}=0\text{ for all }t\in(\alpha,0). \label{Msmall}
\end{align}
Choose $t$ and $\gamma$ so that $\alpha<\gamma<t<0$, and see that for $x
<0$,
\begin{align*}
F_n(x)&=\int_{-\infty}^{x}dF_n(u) 
\leq \int_{-\infty}^{x}e^{\gamma (u-x)}dF_n(u) 
\leq e^{-x\gamma}f_n(\gamma), \\
\Rightarrow M(x)e^{tx}&\leq e^{x(t-\gamma)}\sup_{n}f_n(\gamma).
\end{align*}
As  $\gamma$ is in the interior of $\effdomf$, $\{f_n(\gamma)\}$ converges and is finite for all $n$, so the supremum over $n$ is finite, and taking the limit as $x\rightarrow-\infty$ reduces the above expression to 0, as $t-\gamma>0$. Now let $\theta$ and $\gamma$ be such that $\alpha<\gamma<\theta<0$, and let $M_{\gamma}=\sup_{x<0}M(x)e^{\gamma x}$. By the above we have shown that $M_{\gamma}$ is finite. Using integration by parts, and assuming $-N$ is a continuity point of $F_n$, we see that
\begin{align*}
\int_{-\infty}^{-N} e^{x\theta}dF_n(x)&= e^{x\theta}F_n(x)|_{-\infty}^{-N}-\theta\int_{-\infty}^{-N} e^{x\theta}F_n(x)dx \\
&\leq M(-N)e^{-N\theta}-0-\theta\int_{-\infty}^{-N} e^{x(\theta-\gamma)}e^{\gamma x}M(x)dx \\
&\leq M(-N)e^{-N\theta}-\theta \int_{-\infty}^{-N} e^{x(\theta-\gamma)}M_\gamma dx \\
&=M(-N)e^{-N\theta}-M_\gamma\frac{\theta}{\theta-\gamma}e^{-N(\theta-\gamma)}.
\end{align*}
By $(\ref{Msmall})$, we can make this expression as small as we
want for large enough $N$. Also, as $F$ also satisfies $F(x)\leq
M(x)$, we can do the same if we replace $dF_n$ with $dF$. Moreover,
as $F$ and $F^\diamond$ only disagree on at most countably many
points, we can choose a large $N$ (specifically so that $-N$ is a
point of continuity of $F$ and $F^\diamond$) so that integrating
with respect to $dF$ is equivalent to integrating with respect to
$dF^\diamond$. Finally, as $F_n\rightarrow F$ point-wise and therefore
to $F^\diamond$ weakly, $-N$ is a point of continuity of $F$ and
$F^\diamond$, and $e^{\theta x}$ is bounded on $x\in[-N,\infty)$
we have
\begin{align*}
\int_{-N}^\infty e^{\theta x}dF_n(x)\rightarrow\int_{-N}^\infty e^{\theta x}dF(x)=\int_{-N}^\infty e^{\theta x}dF^\diamond(x)
\end{align*}
point-wise for each $\theta\in(\gamma,0)$. Combining this with the
above result for the integral from $-\infty$ to $-N$ gives us that
$f_n(\theta)\rightarrow \fmimic(\theta)$ point-wise on $(\gamma,0)$,
where $\fmimic$ is the MGF corresponding to $F^\diamond$. But we
can make $\gamma$ as close to $\alpha$ as we want, so we have
point-wise convergence on $(\alpha,0)$. As discussed before $\taw$
convergence implies that $f_n\rightarrow f$ pointwise
on $(\alpha,0)$, so that
$f(\theta)=\fmimic(\theta)$ for $\theta\in(\alpha,0)$. Moreover as
$f$ and $\fmimic$ are convex and lower-semicontinuous,
$f(0)=\lim_{\theta\uparrow 0}f(\theta)=\lim_{\theta\uparrow
0}\fmimic(\theta)=1$ and $f(\alpha)=\lim_{\theta\downarrow
\alpha}f(\theta)=\lim_{\theta\downarrow
\alpha}\fmimic(\theta)=\fmimic(\alpha)$ if $\alpha$ is finite, even
if $\fmimic(\alpha)=\infty$. If $f$ was finite on $[0,\beta)$ but
not on $(\beta,\infty)$ we could similarly show that
$f(\theta)=f_\nu(\theta)$ for $\theta\in[0,\beta]$, and so in general
$f(\theta)=f_\nu(\theta)$ for all $\theta\in\effdomf$. If
$\mathcal{D}_f=\mathcal{D}_{f_\nu}$ then $f=f_\nu$ and $f$ is a
moment generating function, and so is of Type 1. Otherwise $f$
mimics $f_\nu$ and so is of Type 3.

\end{proof}

\begin{proof}[{\bf Proposition \ref{prop:Lessthan1}. Functions with infinite rate}]
If we define $\X_{INT}$ to be the space of all moment generating
functions finite on some open interval, equipped with the subspace
topology, and $\Measures$ to be the space of probability measures on $\R$ equipped with the weak topology, then the mapping from $\X_{INT}$ into the space of probability measures,
$\Phi:\X_{INT}\rightarrow\Measures$, is well
defined, as seen in \cite{Mukherjea06}[Theorem 2]. 
Let $G_m$ be a descending countable base containing $f$, and let
$\Phi(G_m\cap\BSpace)=A_m$. Let $\estL$ be the empirical measure of $\{X_i\}_{i=1}^n$, and see that
\begin{align*}
\inf_{G\ni f}\mover(G)&= \lim_{m\rightarrow\infty}\limsup_{n\rightarrow\infty}\frac{1}{n}\log P(\estM\in G_m) \\
&= \lim_{m\rightarrow\infty}\limsup_{n\rightarrow\infty}\frac{1}{n}\log P(\estM\in G_m\cap\BSpace) \\
&= \lim_{m\rightarrow\infty}\limsup_{n\rightarrow\infty}\frac{1}{n}\log P(\estL\in A_m) \\
&\leq -\lim_{m\rightarrow\infty}\inf_{\nu\in \overline{A_m}}\H(\nu|\mu) \\
&=-\inf_{\nu\in C}\H(\nu|\mu)
\end{align*}
where $C=\cap_{m=1}^\infty \overline{A_m}$. The last line follows
from \cite{Dembo98} [Lemma 4.1.6 (b)]. Let $\nu\in C$. As $\Measures$ is metrizable, let $U_m$ be a descending
countable base for $\nu$, and for each $m$ let
$\{\nu_{m,n}\}_{n=1}^\infty\subset A_m$ be a sequence satisfying
$\nu_{m,n}\rightarrow \nu$ as $n\rightarrow\infty$. If $N_{m}$ is
such that for $n\geq N_{m}$, $\nu_{m,n}\in U_m$, and if $\nu_m^*=\nu_{m,N_{m}}$,
$\nu_m^*\in A_m$ and $\nu_m^*\in U_m$, so that $\nu_m^*\rightarrow
\nu$. For each $\nu_m^*$ there is a corresponding $f_m^*\in G_m$,
and so $f_m^*\rightarrow f$. If $f(0)\neq 1$ and $f_m^*\rightarrow f$,
then by the proof of Proposition \ref{prop:limitsprop} a subsequence of their
corresponding distribution functions tends to a function that is
not a distribution function, and so their measures do not converge.
By contradiction, it must follow that $C$ is empty, and so $\inf_{G\ni
f}\mover(G)=-\infty$.

\end{proof}

\begin{proof}[{\bf Proposition \ref{prop:Abase}. Local convex base.}]

In order to establish Proposition \ref{prop:Abase}, it suffices
to demonstrate:
\begin{enumerate}
\item
\label{Abasei}
For each $f$ such that $f(0)=1$ and $\effdomf\neq\{0\}$ and each $k\in\N$, 
$f\in A_k(f)$.
\item
\label{Abaseii}
For each $f$ such that $f(0)=1$ and $\effdomf\neq\{0\}$ and each $k\in\N$, 
$A_k(f)$ is open.
\item
\label{Abaseiii}
For each $f$ such that $f(0)=1$ and $\effdomf\neq\{0\}$ and each $k\in\N$, 
$A_k(f)$ is convex.
\item
\label{Abaseiv}
For each $f$ such that $f(0)=1$ and $\effdomf\neq\{0\}$ and each $k\in\N$, 
$A_{2k}(f)\subset V_k(f)$.
\end{enumerate}

As our construction of $W(\fs_k)$ is based on nested
epigraphs, the following estimate, readily deducible from \cite{Beer93}[Lemma 1.5.1],
will prove useful. As $\epi(\fs)\subset\epi(f)$,
\begin{align}
\label{eq:beerest}
\sup_{x\in\Bc_k}|d(x,\epi(\fs))-d(x,\epi(f))|
&=\sup_{x\in\Bc_k}d(x,\epi(\fs)\cap \Bc_{2k+2})-d(x,\epi(f)\cap \Bc_{2k+2})
	\nonumber\\
&\leq e_d(\epi(f)\cap \Bc_{2k+2},\epi(\fs)\cap \Bc_{2k+2}) \nonumber\\
&= \sup_{x\in\epi(f)\cap \Bc_{2k+2}}d(x,\epi(\fs)\cap \Bc_{2k+2}),
\end{align}
where $e_d(A,B)$ is the excess between two sets $A$ and $B$. The first equality comes from the fact that $x\in \Bc_k$ and $(0,2)\in\epi(\fs)$, so that $d(x,\epi(\fs))=d(x,(a,b))\leq d(x,(0,2))$ for some $(a,b)\in\epi(\fs)$, and so $(a,b)\in \Bc_{2k+2}$. The same is true for $f$. Also, as $\epi(\fs)\subset \epi(f)$, $d(x,\epi(\fs))\geq d(x,\epi(f))$ for all $x\in\R^2$, which justifies the removal of the absolute value.

For item \ref{Abasei}, by construction, $\fl<\fsl<\fsr<\fr$ and
$f(\theta)<\fs_k(\theta)$ for all $\theta\in[\fsl,\fsr]$ so that
$f\lll \fs_k$ and thus $f\in W(\fs_k)$ for all $k$.
It remains to show that $f$ is also an element of $V_k(\fs_k)$.
From equation \eqref{eq:beerest} it suffices to prove that
\begin{align}
\label{eq:finA}
e_d(\epi(f)\cap \Bc_{2k+2}, \epi(\fs_k)\cap \Bc_{2k+2})<1/k,
\end{align}
which will follow from the construction of $\fs_k$. It suffices
to consider $x=(\theta,f(\theta))$ for $\theta\in[-2k-2,2k+2]$. If
$\theta\in[\fsl,\fsr]$, $d(x,\epi(\fs_k))\leq 1/(2k)$. If
$\theta\in[\fBl,\fsl]$ or $\theta\in[\fsr,\fBr]$, defined in equation
\eqref{eq:fB}, more care is needed. Consider the former and
note that by the triangle inequality
\begin{align*}
d((\theta,f(\theta)), \epi(\fs_k)) \leq 
	d\left((\theta,f(\theta)), \left(\fsl,f(\fsl)\right)\right)
	+ d\left(\left(\fsl,f(\fsl)\right), \epi(\fs_k)\right).
\end{align*}
By equation \eqref{eq:overhangs}, the first term is less than $1/(2k)$
and, as above, the second term is less than or equal to $1/(2k)$ so that
equation \eqref{eq:finA} is ensured and $f\in A_k(f)$.

For item \ref{Abaseii},
the set $V_k(\fs_k)$ is open as it is an element of a local base
for $\fs_k$, so it suffices to show that $W(\fs_k)$ is open. For
ease of notation, we shall show that $W(f)$ is open for any $f$
such that $-\infty<\fl<\fr<\infty$. To do this, let $g\in W(f)$.
We shall construct a set $V_m(g)$ such that $V_m(g)\subset W(f)$.

As $g\lll f$ and $g$ is convex, $g$ is continuous on $[\fl,\fr]$
so that $f-g$ is lower semi-continuous on this range and therefore
its infimum is attained and positive:
\begin{align*}
\delta = \inf_{\theta\in[\fl,\fr]}(f(\theta)-g(\theta))>0.
\end{align*}
Let
\begin{align*}
&0<\epsilon<\min\left(\fl-\gl,\gr-\fr, \frac{\delta}{2},1\right) 
\text{ be such that }\\
&\max\left(g(\fl-\epsilon)-g(\fl),g(\fr+\epsilon)-g(\fr)\right)
< \frac{\delta}{2}.
\end{align*}
By convexity, the second condition ensures that
$g(\theta\pm\epsilon)-g(\theta)<\delta/2$ for any $\theta\in[\fl,\fr]$.
Now choose $m\in\N$ so that
\begin{align*}
(\theta,g(\theta))\in B_m, \text{ for all } 
	\theta\in[\fl-\epsilon,\fr+\epsilon],\text{ and }\frac{1}{m}<\epsilon.
\end{align*}
Thus for any $h\in V_m(g)$,
\begin{align*}
\sup_{x\in B_m} |d(x,\epi(g))-d(x,\epi(h))| < \frac 1m  
\text{ and so }
\sup_{\theta\in [\fl-\epsilon,\fr+\epsilon]} d((\theta,g(\theta)),\epi(h)) 
	< \epsilon.
\end{align*}
So, for any $\theta\in [\fl-\epsilon,\fr+\epsilon]$ there exists
$(\xt,\yt)\in\epi(h)$ such that
\begin{align}
\label{eq:handybound}
\max(|\theta-\xt|,|g(\theta)-\yt|)<\epsilon.
\end{align}
In particular, consider $\theta=\fl-\epsilon$, then
$\xt<\fl-\epsilon+\epsilon=\fl$ and $h(\xt)\leq \yt<
g(\fl-\epsilon)+\epsilon<\infty$ and so $\theta_{h,l}<\fl$. It can
similarly be shown that $\theta_{h,r}>\fr$.

To show that $h\lll f$, it remains to be proven that for any
$\theta\in[\fl,\fr]$, that $h(\theta)<f(\theta)$. This will follow
from convexity of $g$ and $h$. Consider such a $\theta$, then by
equation \eqref{eq:handybound} at $\theta\pm\epsilon$, since $\xtme\leq\theta\leq\xtpe$ we have that
\begin{align*}
h(\theta)\leq \max(h(\xtme),h(\xtpe))
	< \max(g(\theta-\epsilon),g(\theta+\epsilon))+\epsilon
	< \left(g(\theta)+\frac{\delta}{2}\right)+\frac{\delta}{2} 
	\leq f(\theta).
\end{align*}
So if $h\in V_m(g)$, then $h\in W(f)$ and thus $W(f)$ is open
as required.

For item \ref{Abaseiii},
assume $g,h\in A_k(f)$, let $\alpha\in[0,1]$ and let $\la(\theta)=\alpha g(\theta) + (1-\alpha)
h(\theta)$ for all $\theta$. We wish to show that $\la\in A_k(f)$. $\la\in W(\fs_k)$ as $\la(\theta) \leq \max\{g(\theta),h(\theta)\}$
for all $\theta$ and therefore $\la\lll\fs_k$. 

We must show that $\la\in V_k(\fs_k)$. Note that for any $x$,
$d(x,\epi(\la))\geq \min\{d(x,\epi(g)),d(x,\epi(h))\}$ as $\epi(l_\alpha)\subset\epi(g)\cup\epi(h)$.
Thus as $\la,g,h\in W(\fs_k)$, we have that
\begin{align*}
|d(x,\epi(\fs_k))-d(x,\epi(\la))|  &=d(x,\epi(\fs_k))-d(x,\epi(\la)) \\
&\leq d(x,\epi(\fs_k))-\min\{d(x,\epi(g)),d(x,\epi(h))\} \\
&=\max\{|d(x,\epi(\fs_k))-d(x,\epi(h))|,|d(x,\epi(\fs_k))-d(x,\epi(g))|\}.
\end{align*}
Taking the supremum over all $x\in \Bc_k$, as $g,h\in
V_k(\fs_k)$ the right hand side is less $1/k$ and so $\la\in
V_k(\fs_k)$.

Finally, for item \ref{Abaseiv},
we have that $A_{2k}(f)=V_{2k}(\fs_{2k})\cap W(\fs_{2k})\subset
V_{2k}(\fs_{2k})$ so that it suffices to show that $V_{2k}(\fs_{2k})\subset
V_k(f)$. Let $g \in V_{2k}(\fs_{2k})$ and, using the triangle
inequality, consider
\begin{align*}
&\sup_{x\in \Bc_k} |d(x,\epi(f))-d(x,\epi(g))| \\
&\leq \sup_{x\in \Bc_{2k}} |d(x,\epi(f))-d(x,\epi(g))| \\
&\leq 
	\sup_{x\in \Bc_{2k}} |d(x,\epi(f))-d(x,\epi(\fs_{2k}))| 
	+\sup_{x\in \Bc_{2k}} |d(x,\epi(\fs_{2k}))-d(x,\epi(g))| \\
&< \frac{1}{2k} + \frac{1}{2k} = \frac1k,
\end{align*}
and so $g\in V_k(f)$ as required.
\end{proof}

\begin{proof} [{\bf Proposition \ref{prop:super}. Super-additivity.}]
For each $m>n$, define the partial estimates
\begin{align*}
M_{n+1,m}(\theta) = \frac{1}{m-n}\sum_{i=n+1}^m \exp(\theta X_i)
\end{align*}
and note that, for all $\theta\in\R$, $M_{1,n+m}$ is a convex
combination of $M_{1,n}$ and $M_{n+1,n+m}$:
\begin{align*}
M_{1,n+m}(\theta) = \frac{n}{n+m} M_{1,n}(\theta) 
	+ \frac{m}{n+m} M_{n+1,n+m}(\theta).
\end{align*}
Assume $f$ is such that $f(0)=1$ and $\effdomf\neq\{0\}$, and consider fixed
$A_k(f)$. By the convexity of $A_k(f)$ proved in Proposition
\ref{prop:Abase}, we have that if $M_{1,n}\in A_k(f)$ and $M_{n+1,n+m}\in
A_k(f)$, then $M_{1,n+m}\in A_k(f)$, so that, by independence and
identical distribution of increments,
\begin{align*}
P(M_{1,n+m}\in A_k(f)) 
	&\geq P(M_{1,n}\in A_k(f), M_{n+1,n+m}\in A_k(f))\\
	&= P(M_{1,n}\in A_k(f)) P(M_{n+1,n+m}\in A_k(f))\\
	&= P(M_{1,n}\in A_k(f)) P(M_{1,m}\in A_k(f)).
\end{align*}
Thus the sequence $\{\log P(\estM\in A_k(f))\}$ is super-additive
and we have existence of the limit
\begin{align*}
\lim_{n\to\infty} \frac{1}{n} \log P(\estM\in A_k(f))
	= \sup_{n}\left(\frac{1}{n} \log P(\estM\in A_k(f))\right),
\end{align*}
as required.

If $f(0)=1$ and $f(\theta)=+\infty$, then we cannot use Proposition
\ref{prop:Abase}, but for this specific function we can show
that $V_k(f)\cap\BSpace$ is convex in much the same way that we showed that $A_k(f)$
is in Proposition \ref{prop:Abase} so that the result
follows as above. See that if $g,h\in V_k(f)\cap\BSpace$ then $\la=\alpha g+(1-\alpha)h\in\BSpace$. Also, as they all take the value 1 at the origin,  we have that $\epi(f)\subset\epi(g),\epi(h),\epi(\la)$, so
\begin{align*}
|d(x,\epi(f))-d(x,\epi(\la))|  &=d(x,\epi(f))-d(x,\epi(\la)) \\
&\leq d(x,\epi(f))-\min\{d(x,\epi(g)),d(x,\epi(h))\} \\
&=\max\left(d(x,\epi(f))-d(x,\epi(h))|,|d(x,\epi(f))-d(x,\epi(g))|\right).
\end{align*} 
Again, by taking the supremum over all $x\in\Bc_k$ we can show that $\la\in V_k(f)$, so that $V_k(f)\cap\BSpace$ is convex. As $\mathbb{P}[\estM\in V_k(f)]=\mathbb{P}[\estM\in V_k(f)\in\BSpace]$ it follows as for $A_k(f)$ that 
\begin{align*}
\lim_{n\to\infty} \frac{1}{n} \log P(\estM\in V_k(f))
	= \sup_{n}\left(\frac{1}{n} \log P(\estM\in V_k(f))\right).
\end{align*}
\end{proof}

\begin{proof}[{\bf Proposition \ref{prop:Mcomp}. Compactness.}]
The collection of all closed sets in $\R^2$ equipped with $\taw$
is compact. This can be deduced as a result of \cite{Beer93}[Theorem
5.13], which proves that the space is compact when equipped with
the Fell topology, and \cite{Beer93}[Exercise 10(b), pg 144], which
shows the Attouch-Wets and Fell topologies coincide on $\R^2$ as the
closed and bounded subsets of $\R^2$ are compact. Thus to prove
that $\{f\in\MSpacebig:f(0)\leq 1\}$ is compact, it suffices to show that it is
closed in this larger space.

To establish this, consider a sequence of functions $\{f_n\}\subset\MSpacebig$ such that $f_n(0)\leq 1$ and $\lim_{n\rightarrow\infty}\epi(f_n)=A$ in the
$\taw$ topology. That $A=\epi(f)$ for some $f$ can be readily shown
by assuming that there exists an $(x,y)\in A$
and $z>0$ such that $(x,y+z)\not\in A$, and showing
it must follow that the same holds for some $\epi(f_n)$, which cannot be true. Thus, to show that $\{f\in\MSpacebig:f(0)\leq 1\}$ is closed it is sufficient to show that $f=\lim_{n\rightarrow\infty} f_n$ satisfies $f\in\MSpacebig$ and $f(0)\leq 1$. Hence we
will prove that: (i) $f$ is lower semi-continuous; (ii) $f$ is non-negative; \newline (iii) $(0,1)\in
\epi(f)$; and (iv) $f$ is convex.

(i) This follows as $(2^{ \R^2},\tau_{AW_d})$ is compact and
therefore complete, so that $\epi(f)$ is closed.

(ii) Let $(x,y)\in\Bc_k$. As 
\begin{align*}
|d((x,y),\epi(f_n))-d((x,y),\epi(f))|\leq\sup_{z\in \Bc_k} |d(z,\epi(f_n))-d(z,\epi(f))|,
\end{align*}
it follows that $d((x,y),\epi(f_n))\rightarrow d((x,y),\epi(f))$ as $n\rightarrow\infty$. This is true for all $(x,y)\in\R^2$ and is a well-known feature of the Attouch-Wets topology. If $y<0$, $d((x,y),\epi(f_n))$ is bounded below by $|y|$ by the non-negativity of $f_n$. Therefore $d((x,y),\epi(f))\geq |y|$ and so $(x,y)\not\in\epi(f)$.

(iii) $d((0,1),\epi(f))=\lim_{n\rightarrow\infty}d((0,1),\epi(f_n))=\lim_{n\rightarrow\infty}0=0$. As $\epi(f)$ is 
closed it follows that $(0,1)\in\epi(f)$.

(iv) Fix $(x_1,y_1),
(x_2,y_2)\in \epi(f)$, $\alpha\in[0,1]$, and let
$\alpha(x_1,y_1)+(1-\alpha)(x_2,y_2)=(x_3,y_3)$. As $\epi(f_n)$ is closed, there exist some $\delta_i,\epsilon_i$, $i=1,2$ such that 
\begin{align*} 
d((x_i,y_i),\epi(f_n))&=d((x_i,y_i),(x_i+\delta_i,y_i+\epsilon_i))
\end{align*}
and $(x_i+\delta_i,y_i+\epsilon_i)\in \epi(f_n)$. 
Thus there exist $\delta,\epsilon$ satisfying
$\delta=\alpha\delta_1+(1-\alpha)\delta_2$, \newline
$\epsilon=\alpha\epsilon_1+(1-\alpha)\epsilon_2$ such that 
\begin{align*}
\alpha(x_1+\delta_1,y_1+\epsilon_1)+(1-\alpha)(x_2+\delta_2,y_2+\epsilon_2) 
	=(x_3,y_3)+(\delta,\epsilon)\in \epi(f_n)
\end{align*}
and so 
\begin{align*}
d((x_3,y_3),\epi(f_n))&\leq\max\{|\delta|,|\epsilon|\} 
\leq\max\{|\delta_1|,|\delta_2|,|\epsilon_1|,|\epsilon_2|\} \\
&=\max\{d((x_1,y_1),\epi(f_n)),d((x_2,y_2),\epi(f_n))\}.
\end{align*}
 Therefore $d((x_3,y_3),\epi(f))=\lim_{n\rightarrow\infty}d((x_3,y_3),\epi(f_n))=0$, and so $(x_3,y_3)\in \epi(f)$.
\end{proof}

\begin{proof} [{\bf Theorem \ref{thm:MLDPnonchar}. LDP for MGF estimators.}]
First see that $f\in\MSpacesmall$ if and only if it is of Type 1 or 3 as given in the statement of Proposition \ref{prop:limitsprop}, so that $\MSpacesmall=\overline{\BSpace}\cap\{f:f(0)=1\}$.  Proposition \ref{prop:Mcomp} gives that the measures are exponentially tight, while Propositions \ref{prop:Lessthan1} and \ref{prop:super} give
coincidence of the upper and lower deviation functions, and therefore by
\cite{Dembo98}[Lemma 1.2.18], we obtain the result for the
MGF estimators $\{\estM\}$. We can then
reduce this to an LDP in $\MSpacesmall$ by \cite{Dembo98}[Lemma 4.1.5(b)] as the
effective domain of $\IM$ is a subset of $\MSpacesmall$, which is measurable.
\end{proof}

\begin{proof}[{\bf Lemma \ref{lem:Lcont}. Continuity of $\l$}]
This proof follows from \cite{Duffy05b}[Proposition 3], although
not immediately as the map $\l$ fails that proposition's criteria.
We will bypass this difficulty by considering a sequence of
restrictions of $\l$ that satisfy the propositions criteria and
such that the images of elements of $\MSpacesmall$ under such
restrictions are equal to their image under $\l$ when intersected
with an increasingly large neighbourhood of the origin. First,
consider the function $g:\R^2\mapsto\R\times(0,\infty):(\theta,\psi)\mapsto
(\theta,\exp(\psi))$. Then $\epi(\l(f)) = g^*(\epi(f))$, where
$g^*:\Pc(\R\times(0,\infty))\mapsto\Pc(\R^2)$, with $\Pc$ denoting
the power set, and $g^*(D)=\{(\theta,\psi):g(\theta,\psi)\in D\}$
is the pull back. Although $g$ is bijective, continuous and maps
bounded sets to bounded sets, its inverse fails to be uniformly
continuous on bounded sets, so that the assumptions of
\cite{Duffy05b}[Proposition 3] do not hold.

Now consider $g_n=g\vert_{\R\times[-n,\infty)}$, the function $g$ restricted to the set $\R\times[-n,\infty)$ with codomain $\R\times[\exp(-n),\infty)$, and the corresponding pullback $g^*_n:\Pc(\R\times[\exp(-n),\infty))\mapsto\Pc(\R\times[-n,\infty))$. Then $g_n$ is bijective, continuous and with an inverse uniformly continuous on bounded subsets. Therefore by \cite{Duffy05b}[Proposition 3] $g^*_n$ is continuous. Now consider any sequence of closed subsets of $\R\times(0,\infty)$, $\{A_m\}$, all containing the point $(0,1)$ and converging in $(\Pc(\R\times(0,\infty)),\taw)$ to $A$, which, by properties of $\taw$ convergence, must also contain the point $(0,1)$. Then by \cite{Beer93}[Exercise 7.4.1] $A_m\cap(\R\times[\exp(-n),\infty))\rightarrow A\cap(\R\times[\exp(-n),\infty))$ in $(\Pc(\R\times(0,\infty)),\taw)$ as $m\rightarrow\infty$. By $(\ref{eq:topology})$ we can easily show that this implies convergence in $(\Pc(\R\times[\exp(-n),\infty)),\taw)$. Therefore $g^*_n(A_m\cap(\R\times[\exp(-n),\infty)))\rightarrow g^*_n(A\cap(\R\times[\exp(-n),\infty)))$ as $m\rightarrow\infty$ for any $n$. Note that $g^*_n(A_m\cap (\R\times[\exp(-n),\infty)))=g^*(A_m)\cap (\R\times [-n,\infty))$ when considered as elements of $\Pc(\R^2)$. Therefore (\ref{eq:topology}) holds for $\{g^*(A_m)\cap (\R\times [-n,\infty))\}_{m=1}^\infty$ for any $\epsilon>0$ and any bounded set $B\subset\R\times[-n,\infty)$. 

Now fix any bounded $B\subset\R^2$ and any $\epsilon>0$. Then $B\subset\R\times[-n,\infty)$ for some $n$, and for any $C\subset\R^2$ containing the point $(0,0)$, such as $g^*(A_m)$ and $g^*(A)$, $d(x,C)=d(x,C\cap B_{2r})$ for any $x\in B$ where $r$ is such that $B\subset B_r$, and so for $n>2r$ and any $x\in B$,
\begin{align*}
d(x,g^*(A_m)\cap (\R\times [-n,\infty)))&=d(x,g^*(A_m)\cap (\R\times [-n,\infty))\cap B_{2r})=d(x,g^*(A_m)\cap B_{2r}) \\
&=d(x,g^*(A_m)).
\end{align*}
The same if true for $g^*(A)$ and so (\ref{eq:topology}) also holds for $\{g^*(A_m)\}$, any $\epsilon>0$ and any $B\subset\R^2$, so that $g^*(A_m)\rightarrow g^*(A)$ in $(\Pc(\R^2),\taw)$. This is true for every sequence $\{A_m\}$ containing the point $(0,1)$, and so $g^*$ is continuous when its domain is restricted to sets containing the point $(0,1)$, which implies that the functional $L:\MSpacesmall\rightarrow\LSpace$ is continuous. 
\end{proof}
\begin{proof}[{\bf Theorem \ref{thm:LLDP}. LDP for CGF estimators.}]
This result follows via an application of the contraction principle
from the LDP for $\{\estM\}$ in $\MSpacesmall$ proved in Theorem 
\ref{thm:MLDPnonchar} through the function
defined in Lemma \ref{lem:Lcont}.
\end{proof}

\begin{proof}[{\bf Lemma \ref{lem:JCont}. Continuity of $\j$}]
Although the notation $\effdomf$ has thus far been used for MGFs, here we will use it for elements of $\LSpace$ to denote their effective domain. Fix some $f\in\LSpace$ satisfying $\effdomf\neq\{0\}$, $\effdomf\subset[0,\infty)$, and let $f_n\rightarrow f$ in $\taw$. Then $f_n\rightarrow f$ pointwise on the interior of $\effdomf$ and $f_n\rightarrow\infty$ pointwise outside $\overline{\effdomf}$. Therefore $\j(f_n)\rightarrow \j(f)$ pointwise on the interior of $\effdomf$, $\j(f_n)\rightarrow \infty$ outside $\effdomf\cup[0,\infty)$ and $\j(f_n)\rightarrow -\infty$ on $(-\infty,0)$. This can be extended to uniform convergence on compact subsets of the interior of $\effdomf$ by the convexity of $\j(f)$ and $\j(f_n)$ on $(0,\infty)$, and so by a minor modification of \cite{Beer93}[Lemma 7.1.2] we have that $\j(f_n)\rightarrow \j(f)$ in $(\JSpace,\taw)$, so that $\j$ is continuous at $f$. Proof is identical if $\effdomf\neq\{0\}$, $\effdomf\subset(-\infty,0]$, as $\j(f)$ and $\j(f_n)$ are concave on $(-\infty,0)$.

Now assume that $f$ is finite in a neighbourhood of the origin, and let $f_n\rightarrow f$ in $\taw$. Let $f^-$ and $f^+$ mimic $f$ with $\mathcal{D}_{f^-}=\effdomf\cap(-\infty,0]$, $\mathcal{D}_{f^+}=\effdomf\cap[0,\infty)$, and define $f^-_n,f^+_n$ similarly. It follows from \cite{Beer93}[Exercise 7.4.1] that $f^-_n\rightarrow f^-$, $f^+_n\rightarrow f^+$ in $\taw$, and so $\j(f^-_n)\rightarrow \j(f^-)$, $\j(f^+_n)\rightarrow \j(f^+)$. Defining $A^+_n=\epi(\j(f^+_n))$, $A^-_n=\epi(\j(f^-_n))$, $C^+_n=A^+_n\cap\{(x,y):x\geq 0,\text{ or } x=0,y\geq \j_n(f)(0)\}$, and $A^+$, $A^-$ and $C^+$ similarly, as $\{(x,y):x\geq 0,\text{ or } x=0,y\geq \j_n(f)(0)\}$ is closed it follows again by \cite{Beer93}[Exercise 7.4.1] that $C^+_n\rightarrow C^+$ in $\taw$. As $\epi(\j(f))=A^-\cup C^+$ and similarly for $\epi(f_n)$, for any fixed closed and bounded set $B\subset\R^2$, $x\in B$ and $n\geq 1$, without loss of generality we can assume $d(x,A^-_n\cup C^+_n)=d(x,A^-_n)$ and that $|d(x,\epi(\j(f)))-d(x,\epi(\j(f_n)))|=d(x,\epi(\j(f)))-d(x,\epi(\j(f_n)))$ so that
\begin{align*}
|d(x,\epi(\j(f)))-d(x,\epi(\j(f_n)))|&=d(x,\epi(\j(f)))-d(x,\epi(\j(f_n))) \\
&=d(x,A^-\cup C^+)-d(x,A^-_n\cup C^+_n) \\
&=d(x,A^-\cup C^+)-d(x,A^-_n) \\
&\leq d(x,A^-)-d(x,A^-_n) \\
&\leq |d(x,\epi(\j(f^-)))-d(x,\epi(\j(f^-_n)))|.
\end{align*}
This is true for all $n$ and for all $x\in B$, so that 
\begin{align*}
&\sup_{x\in B}|d(x,\epi(\j(f)))-d(x,\epi(\j(f_n)))| \\
\leq &\max\{\sup_{x\in B}|d(x,\epi(\j(f^-)))-d(x,\epi(\j(f^-_n)))|,\sup_{x\in B}|d(x,\epi(\j(f^+)))-d(x,\epi(\j(f^+_n)))|\},
\end{align*}
and so as $\j(f^-_n)\rightarrow \j(f^-)$ and $\j(f^+_n)\rightarrow \j(f^+)$, we can apply (\ref{eq:topology}) to show that $\j(f_n)\rightarrow \j(f)$ in $\taw$.

Now assume that $f\in\LSpace$ satisfies $\effdomf=\{0\}$. Then for any sequence $f_n\rightarrow f$, $\j(f_n)(\theta)\rightarrow\infty$ for all $\theta>0$ and $\j(f_n)(\theta)\rightarrow-\infty$ for all $\theta<0$. To prove that $\j(f_n)\rightarrow \j(f)$ in $\taw$, we will apply \cite{Beer93}[Theorem 3.1.7] that states that for a collection $\{A_n\}$ and $A$, non-empty closed subsets of a metric space $X$, $A_n\rightarrow A$ in $\taw$ if and only if for every $k\geq 1$,
\begin{align*}
\sup_{x\in A\cap \Bc_k(x_0)}d(x,A_n)\rightarrow 0\text{ and }\sup_{x\in A_n\cap \Bc_k(x_0)}d(x,A)\rightarrow 0
\end{align*}
as $n\rightarrow\infty$, where $x_0$ is any fixed point in $X$, $\Bc_k(x_0)$ is a closed ball of radius $k$ and centre $x_0$, and by convention $\sup_{x\in\emptyset}d(x,C)=0$ for any non-empty set $C$. 

In our case, $A_n=\epi(\j(f_n))$, $A=\epi(\j(f))$ and $\Bc_k(x_0)=\Bc_k$, the closed ball in $\R^2$ of radius $k$ around the origin. First consider $\sup_{x\in \epi(\j(f))\cap \Bc_k}d(x,\epi(\j(f_n)))$ for some $k\geq 1$. As $\epi(\j(f))\cap \Bc_k$ is closed, for all $n$ there exists some $(x_n,y_n)\in\Bc_k$ with $x_n\leq 0$ such that $d((x_n,y_n),\epi(\j(f_n)))=\sup_{x\in \epi(\j(f))\cap \Bc_k}d(x,\epi(\j(f_n)))$. As $x_n\in[-k,0]$ for all $n$, a subsequence of $x_n$ converges, say to some $x\in[-k,0]$. First assume $x<0$ and fix some $\epsilon>0$. Then along this subsequence, when $n$ is large enough so that $|x_n-x|<\epsilon$ and $\j(f_n)(x)<-k$, then as $y_n\geq-k$, $(x,y_n)\in\epi(\j(f_n))$ and so $d((x_n,y_n),\epi(\j(f_n)))\leq d((x_n,y_n),(x,y_n))<\epsilon$. This is true for all $\epsilon>0$ and so along any subsequence of $\{f_n\}$ such that $\{x_n\}$ converges to some $x<0$, $\sup_{x\in \epi(\j(f))\cap \Bc_k}d(x,\epi(\j(f_n)))\rightarrow 0$. Now assume a subsequence of $\{x_n\}$ converges to 0. Then for any $\epsilon>0$ and $n$ large enough so that $x_n<-\epsilon$ and $\j(f_n)(-\epsilon/2)<-k$, we again have that $d((x_n,y_n),\epi(\j(f_n)))<\epsilon$. Therefore $\sup_{x\in \epi(\j(f))\cap \Bc_k}d(x,\epi(\j(f_n)))\rightarrow 0$ along any convergent subsequence of $\{x_n\}$. If $\sup_{x\in \epi(\j(f))\cap \Bc_k}d(x,\epi(\j(f_n)))\not\rightarrow 0$, that is $\sup_{x\in \epi(\j(f))\cap \Bc_k}d(x,\epi(\j(f_n)))>\epsilon$ infinitely often for any $\epsilon>0$, then along this subsequence we can find a sub-subsequence such that $\{x_n\}$ converges, which would yield a contradiction. Therefore $\sup_{x\in \epi(\j(f))\cap \Bc_k}d(x,\epi(\j(f_n)))\rightarrow 0$ along the entire sequence.

Now consider $\sup_{x\in \epi(\j(f_n))\cap \Bc_k}d(x,\epi(\j(f)))$ for some $k\geq 1$. Assume that $\epi(\j(f_n))\cap \Bc_k\cap([0,\infty)\times\R)\neq\emptyset$ for any $n$, as if this is the case then $\sup_{x\in \epi(\j(f_n))\cap \Bc_k}d(x,\epi(\j(f)))=0$. If this is not true infinitely often then we still need to show convergence along that subsequence, and if it is not true only finitely often then convergence holds immediately. With this assumption, notice that for all $n$ there exists some $(x_n,y_n)\in\Bc_k$ with $x_n\geq 0$ such that $\sup_{x\in \epi(\j(f_n))\cap \Bc_k}d(x,\epi(\j(f)))=d((x_n,y_n),\epi(\j(f))=x_n$. It is easy to see that $x_n=\sup\{\theta\in[0,k]:f_n(\theta)\leq k\}$, and by lower semi-continuity of $\j(f_n)$ on $(0,\infty)$, $\j(f_n)(x_n)\leq k$. Also, $\j(f_n)(x_n)\leq k\Rightarrow f_n(x_n)\leq kx_n\leq k^2$, so that $\sup_{x\in \epi(f_n)\cap \Bc_{k^2}}d(x,\epi(f))\geq x_n$. But as $f_n\rightarrow f$, $\sup_{x\in \epi(f_n)\cap \Bc_k^2}d(x,\epi(f))\rightarrow 0$ so that $\sup_{x\in \epi(\j(f_n))\cap \Bc_k}d(x,\epi(\j(f)))=x_n\rightarrow 0$. 
\end{proof}

\begin{proof}[{\bf Theorem \ref{thm:JLDP}. LDP for Jarzynski estimators}]
This result follows from the continuity of $\j$ and the contraction principle. 
\end{proof}

\begin{proof}[{\bf Theorem \ref{thm:ILDP}. LDP for rate function estimators.}]
This result follows from the continuity of $\lf$ and the contraction principle. 
\end{proof}
\subsection*{Section 5}

\begin{proof}[{\bf Proposition \ref{lemma:convex}. Convexity of $\IM$.}]
We establish global convexity of $\IM$ by creating an argument along
the lines of \cite{Dembo98}[Lemma 4.1.21], but the conditions of
that Lemma as stated do not hold here as $\MSpacesmall$ is not a topological
vector space. The proof of \cite{Dembo98}[Lemma 4.1.21], however,
hinges only on two properties that we instead establish directly.

First, we need to show continuity of averaging in $\MSpacesmall$.  That
is, if $\{f_n\},\{g_n\}\subset \MSpacesmall$ are such that $f_n\rightarrow
f\in\MSpacesmall$ and $g_n\rightarrow g\in\MSpacesmall$ in $\taw$, then
$(f_n+g_n)/2\rightarrow (f+g)/2$ in $\taw$.
If $f$ or $g$ is finite and continuous at one point in the domain of the other, then we can apply \cite{Beer93}[Theorem 7.4.5] to show that $\lim_{n\rightarrow\infty}(f_n+g_n)=f+g$ in $\MSpacebig$. To show continuity of multiplication by 1/2, see that $\{(f_n+g_n)/2\}$ is a sequence in $\{h:h(0)\leq 1\}$ which is compact by Proposition \ref{prop:Mcomp}, so it has a convergent subsequence. Replacing $\{(f_n+g_n)/2\}$ with this subsequence with limit $h$ and applying \cite{Beer93}[Theorem 7.4.5] it follows that 
\begin{align*}
f+g=\lim_{n\rightarrow\infty}[(f_n+g_n)/2+(f_n+g_n)/2]=2h
\end{align*}
so that $h=(f+g)/2$. This is true for every subsequence of $\{(f_n+g_n)/2\}$, which lies in a compact set, and so it must follow that $(f_n+g_n)/2\rightarrow h=(f+g)/2$ and so multiplication by 1/2 is a continuous operation in $\MSpacebig$. It then follows that averaging is continuous in $\MSpacebig$ and therefore in $\MSpacesmall$. If it not the case that $f$ or $g$ is finite and continuous at one point in the domain of the other, then it must follow that $f(\theta)=\infty$ for $\theta>0$ and $g(\theta)=\infty$ for $\theta<0$, or vice versa. In this case $(f+g)/2$ is finite only at $0$ and $(f_n(\theta)+g_n(\theta))/2\rightarrow\infty$ point-wise for each $\theta\neq 0$, 
so $h=\lim_{n\rightarrow\infty}(f_n+g_n)/2$ satisfies $h(\theta)=\infty$ for $\theta\neq 0$, and so $h=(f+g)/2$ if $h(0)=1$. Consider $(0,y)$ for $y<1$. As $f(0)=1$, $d((0,y),\epi(f_n))\rightarrow d((0,y),\epi(f))>0$, and similarly for $d((0,y),\epi(g_n))$. Furthermore, 
\begin{align*}
(f_n(\theta)+g_n(\theta))/2&\geq\min\{f_n(\theta),g_n(\theta)\} \\
\Rightarrow \epi((f_n+g_n)/2)&\subset\epi(f_n)\cup\epi( g_n) \\
\Rightarrow d((0,y),\epi((f_n+g_n)/2))&\geq\min\{d((0,y),\epi(f_n),d((0,y),\epi(g_n))\} \\
\Rightarrow d((0,y),\epi((f_n+g_n)/2))&\not\rightarrow 0 \\
\Rightarrow d((0,y),\epi(h))&>0.
\end{align*}
Therefore $h(0)>y$ for any $y<1$, so $h(0)=1$ as required.  

Second, we need to show the continuity of $\beta f+(1-\beta)g$ with
respect to $\beta\in (0,1)$. First notice that $\mathcal{D}_{\beta
f+(1-\beta)g}=\mathcal{D}_f\cap\mathcal{D}_g$ for all $\beta\in(0,1)$.
Moreover we have pointwise convergence of $\beta f+(1-\beta)g$ on
the interior of $\mathcal{D}_f\cap\mathcal{D}_g$ as $\beta\rightarrow
\beta_0\in(0,1)$. This can be extended to uniform convergence on
bounded subsets of the interior of $\mathcal{D}_f\cap\mathcal{D}_g$
by the convexity of $\beta f+(1-\beta)g$. Convergence in $\taw$ follows
from \cite{Beer93}[Lemma 7.1.2], with simple modifications to handle
the possibility of open domains and common domains not necessarily equal to $\R$. Now see that for any sets $A_1$ and
$A_2$,
letting $(A_1+A_2)/2=\{g:g=(f_1+f_2)/2,\:f_1\in A_1,\:f_2\in A_2\}$
and following the notation of Proposition \ref{prop:super},
\begin{align*}
 P(\estM\in A_1,\:M_{n+1,2n}\in A_2)&\leq  P\left[M_{2n}\in \frac{A_1+A_2}{2}\right] \\
\Rightarrow  P(\estM\in A_1) P(M_{n}\in A_2)&\leq  P\left(M_{2n}\in \frac{A_1+A_2}{2}\right) \\
\Rightarrow \frac{1}{2n}\log P(\estM\in A_1)+\frac{1}{2n}\log P(M_{n}\in A_2)&\leq \frac{1}{2n}\log P\left(M_{2n}\in \frac{A_1+A_2}{2}\right) \\
\Rightarrow \frac{1}{2}\liminf_{n\rightarrow\infty}\frac{1}{n}\log P(\estM\in A_1)+\frac{1}{2}\liminf_{n\rightarrow\infty}\frac{1}{n}\log P(\estM\in A_2)&\leq \limsup_{n\rightarrow\infty}\frac{1}{n}\log P\left(M_{n}\in \frac{A_1+A_2}{2}\right).
\end{align*}
As $(f_1,f_2)\mapsto (f_1+f_2)/2$ is a continuous operation, as is $\beta\mapsto \beta f+(1-\beta)g$ for any $f,g\in \MSpacesmall$ and $\beta\in(0,1)$, the remainder of the proof follows identically to that of \cite{Dembo98}[Lemma 4.1.21]. 
\end{proof}

\begin{proof}[{\bf Proposition \ref{thm:I=H}. $\IM(f)$ for $f$ an MGF}]

First it is worth noting that $\nu$ is uniquely defined if $f$ is
finite on an open interval, which need not include $0$ \cite{Mukherjea06}.
As in Proposition \ref{prop:Lessthan1}, if we define $\X_{INT}$ to be the space of all moment generating
functions finite on some open interval, equipped with the subspace
topology, then the mapping from MGFs to measures
$\Phi:\X_{INT}\rightarrow \Measures$ is well
defined and continuous when $\X_{INT}$ is equipped with the
subspace topology, as convergence of MGFs that are finite on some
open interval to another MGF that is finite on some open
interval implies convergence of their distributions, as seen in \cite{Mukherjea06} and in the proof of Proposition \ref{prop:limitsprop}.
So we have that for every open $G\subset\Measures$, $\Phi^{-1}(G)=G'\cap\X_{INT}$ for some open
$G'\subset \MSpacesmall$. As $\Phi^{-1}(\nu)=f_\nu$ for $f_\nu\in\X_{INT}$, $G\ni\nu\Rightarrow G'\ni f_\nu$ and so if $\estL$ is the empirical distribution governed by the same sample as for $M_n$,
\begin{align*}
\liminf_{n\rightarrow\infty}\frac{1}{n}\log P(\estL\in G)
&=\liminf_{n\rightarrow\infty}\frac{1}{n}\log P(\estM\in G'\cap\X_{INT}) \\
&=\liminf_{n\rightarrow\infty}\frac{1}{n}\log P(\estM\in G') 
=\munder(G') 
\geq-\IM(f_\nu) \\
\end{align*}
This implies that
\begin{align*}
\inf_{G\ni\nu}\liminf_{n\rightarrow\infty}\frac{1}{n}\log P(\estL\in G)&\geq -\IM(f_\nu) 
\Rightarrow -\H(\nu|\mu)\geq -\IM(f_\nu).
\end{align*}

To prove the second inequality, that for any $f_\nu\in\BSpace$,
$\IM(f_\nu)\leq \H(\nu|\mu)$, let $\Y_N=\{\nu\in\Measures:\nu\text{
is supported on }[-N,N]\}$, and let $\Gamma$ be the mapping from
measures to MGFs. Since convergence of measures in $\Y_N$ implies
convergence of the corresponding moment generating functions, we
have that $\Gamma\vert_{\Y_N}$ is continuous and so
$G_N\cap\Y_N=\Gamma^{-1}(G'\cap\Gamma(\Y_N))$ is open for every
open set $G'\subset\MSpacesmall$. Note that we do not know if the
same $G\subset\Measures$ forms the inverse image of $G'\cap\Gamma(\Y_N)$
for every $N$, hence the use of the subscript, yet we can demand
that $G_N\subset G_{N+1}$. Note that
\begin{align*}
\Gamma^{-1}(G'\cap\Gamma(\Y_N))&=\Gamma^{-1}((G'\cap\Gamma(\Y_N))\cap(G'\cap\Gamma(\Y_{N+1}))) \\
&=\Gamma^{-1}(G'\cap\Gamma(\Y_N))\cap\Gamma^{-1}(G'\cap\Gamma(\Y_{N+1})) \\
&=(G_N\cap\Y_N)\cap(G_{N+1}\cap\Y_{N+1}) \\
&=(G_N\cap G_{N+1})\cap\Y_N.
\end{align*}
So if $G_N\not\subset G_{N+1}$, we can replace $G_N$ with $G_{N}\cap
G_{N+1}$.  So fixing $f_\nu\in\BSpace$, $f_\nu\in\Gamma(\Y_M)$ for
some $M$, and so we have that for every $G'\ni f_\nu$, $\nu\in G_M$
and so
\begin{align*}
P(\estM\in G')&= P(\estM\in G'\cap\BSpace) 
= P(\estM\in \cup_{N=M}^\infty (G'\cap \Gamma(\Y_N))) 
= P(\estL\in \cup_{N=M}^\infty(G_N\cap \Y_N)) \\
&\geq  P(\estL\in\cup_{N=M}^\infty( G_M\cap \Y_N)) 
=P(\estL\in G_M\cap\Gamma^{-1}(\BSpace)) 
= P(\estL\in G_M).
\end{align*}
This implies that
\begin{align*}
\munder(G')&\geq\liminf_{n\Rightarrow\infty}\frac{1}{n}\log P(\estL\in G_M) 
\geq \inf_{G\ni\nu}\liminf_{n\rightarrow\infty}\frac{1}{n}\log P(\estL\in G) 
=-\H(\nu|\mu).
\end{align*}
This is true for every $G'\ni f_\nu$, and so 
\begin{align*}
\inf_{G'\ni f_\nu}\munder(G)&\geq-\H(\nu|\mu) 
\Rightarrow -\IM(f_\nu)\geq -\H(\nu|\mu). 
\end{align*}

To prove the third inequality, that for any moment generating function
$f$, $\IM(f)\leq \inf_{\left\{\nu:f=f_\nu\right\}}\H(\nu|\mu)$,
fix any moment generating function $f_\nu\in\MSpacesmall$. Let
$\nu_n(dx)=\nu(dx)/\nu([n,n])$ on $[-n,n]$. It can easily be shown
that $\H(\nu_n|\mu)\rightarrow \H(\nu|\mu)$ as $n\rightarrow\infty$.
As discussed before, $f_{\nu_n}\rightarrow f_\nu$ in $\taw$,
$\IM(f_{\nu_n})=\H(\nu_n|\mu)$ by the previous two lemmas and so
by lower semi-continuity of $\IM$,
\begin{align*}
\IM(f_\nu)\leq \lim_{n\rightarrow\infty}\IM(f_{\nu_n})=\lim_{n\rightarrow\infty}\H(\nu_n|\mu)=\H(\nu|\mu).
\end{align*}
In the case of the moment generating function $f$ finite only at
0, this statement is true for all $\nu$ such that $f=f_\nu$, and
so $\IM(f)\leq\inf_{\nu:f=f_\nu}\H(\nu|\mu)$.
\end{proof}

\begin{proof}[{\bf Lemma \ref{lemma:mimics}. Characterizations for $f$ not a MGF}]

Point \ref{lemma:valuesforspike}.
Assume $\IM(f)<\infty$. Then for any $g\in\MSpacesmall$ and any any $\gamma\in(0,1)$, $\gamma
f+(1-\gamma)g=f$. By the convexity of $\IM$ proved in Proposition
\ref{lemma:convex},
$\IM(f)=\IM(\gamma f+(1-\gamma)g)\leq \gamma \IM(f)+(1-\gamma)\IM(g)$,
so that $\IM(f)\leq \IM(g)$.
This is true for all $g\in\MSpacesmall$, and so $\IM(f)=0$.

Point \ref{lemma:finitemimic}.
See first that $\gamma g+(1-\gamma) \fmimic\rightarrow f $ in $\taw$
as $\gamma\rightarrow 0$, as they converge uniformly on closed subsets of $(\alpha,\beta)$
and are infinite outside $[\alpha,\beta]$, and so $\taw$ convergence can be proven by a minor modification of \cite{Beer93}[Proposition 7.1.3]. Thus
\begin{align*}
\IM(f) \leq \lim_{\gamma\rightarrow 0}\IM(\gamma g+(1-\gamma)\fmimic)
	\leq \lim_{\gamma\rightarrow 0}\left(\gamma\IM(g)+(1-\gamma)\IM(\fmimic)\right)=\IM(f_\nu).
\end{align*}

Point \ref{lemma:valuesformimics}
First we show that $\IM(f)\geq \IM(\fmimic)$, in a manner very
similar to how we showed that $\IM(g)=\infty$ for $g(0)<1$ in Proposition \ref{prop:Lessthan1}, except
in this case it happens that $C$ is non-empty. Like before, let
$G_m$ be a descending countable open base of $f$, and let $A_m$ be
defined by $\Phi(G_m\cap\BSpace)=A_m$. See that
\begin{align*}
\IM(f)
&=-\lim_{m\rightarrow\infty}\limsup_{n\rightarrow\infty}\frac{1}{n}\log P(\estM\in G_m) 
= -\lim_{m\rightarrow\infty}\limsup_{n\rightarrow\infty}\frac{1}{n}\log P(\estL\in A_m) \\
&\geq \lim_{m\rightarrow\infty}\inf_{\kappa\in \overline{A_m}}\H(\kappa|\mu) 
=\inf_{\kappa\in C}\H(\kappa|\mu),
\end{align*}
where $C=\cap_{m=1}^\infty \overline{A_m}$. It can be shown that
$C$ is non-empty, however here it is unnecessary to do so, as if
$C$ is empty then $\IM(f)=\infty$ and so $\IM(f)\geq \IM(\fmimic)$
trivially. Let $\kappa\in C$. As $\Measures$ is metrizable, let $U_m$
be a descending countable base for $\kappa$, and let
$\{\kappa_{m,n}\}_{n=1}^\infty\subset A_m$ be a sequence satisfying
$\kappa_{m,n}\rightarrow \kappa$ as $n\rightarrow\infty$. Let $N_{m}$ be
such that for $n\geq N_{m}$, $\kappa_{m,n}\in U_m$. Then if
$\kappa_m^*=\kappa_{m,N_{m}}$, $\kappa_m^*\in A_m$ and $\kappa_m^*\in U_m$, so
that $\kappa_m^*\rightarrow \kappa$. For each $\kappa_m^*$ there is a
corresponding $f_m^*\in G_m$, and so $f_m^*\rightarrow f$. If $f$
mimics $\fmimic$, then we have a sequence of moment generating
functions converging to a function mimicking a moment generating
function $f_\nu$, and a corresponding convergent
sequence of measures, then it follows that $\kappa=\nu$ from the
proof of Proposition \ref{prop:limitsprop}, 
and from \cite{Mukherjea06}[Theorem 2].
So $C=\{\nu\}$ and thus $\IM(f)\geq \H(\nu|\mu)=\IM(\fmimic)$. Now assume that
$\IM(f)<\infty$, then we can apply statement \ref{lemma:finitemimic} of Lemma \ref{lemma:mimics}
with $f=g$ and so $\IM(f)=\IM(\fmimic)$.
\end{proof}

\begin{proof}[{\bf Lemma \ref{thm:5equiv}. Five equivalences.}]

$(1)\Rightarrow (2)$.
Assume (1), so that the sequence $1/n\sum_{1=1}^ne^{\gamma X_i}$
satisfies the conditions of Cram\'er's Theorem. Assume without loss of generality that
$\gamma<\alpha$, and so $\alpha\leq 0$ is finite. Fix $f\in\effdomsub$.
Then for $k$ large enough so that $\alpha-1/k>-k$ and $\gamma<\alpha-1/k$,
\begin{align*}
\estM\in V_k(f)\Rightarrow \estM(\alpha-1/k)>k\Rightarrow \estM(\gamma)>k
\end{align*}
as $d((\alpha-1/k,k),\epi(f))\geq 1/k$, so we can't have
$(\alpha-1/k,k)\in\epi(\estM)$. So
\begin{align*}
\mover(V_k(f))\leq \limsup_{n\rightarrow\infty}\frac{1}{n}\log P(\estM(\gamma)>k)\leq-\inf_{x\geq k}I_\gamma^*(x)
\end{align*}
where $I_\gamma^*(x)$, the Cram\'er's Theorem rate function
for $1/n\sum_{1=1}^ne^{\gamma X_i}$, has compact level sets. Then
\begin{align*}
-\IM(f)&=\lim_{k\rightarrow\infty}\mover(V_k(f))\leq \lim_{k\rightarrow\infty}-\inf_{x\geq k}I_\gamma^*(x)=-\infty \\
\Rightarrow \IM(f)&=\infty.
\end{align*}
This is true for all $f\in\effdomsub$, so $(1)\Rightarrow (2)$.

$(2)\Rightarrow (3)$ follows immediately from Proposition \ref{thm:I=H}.

$(3)\Rightarrow (4)$ is trivial as $\effdom\subset\effdomsub$. 

$(4)\Rightarrow (1)$ requires the lengthiest of proof. 
We first establish the result when $\mu$ is a discrete distribution.
Consider all $\nu$ such that $f_\nu\in \effdom$. Then assuming (4),
\begin{align*}
\sum_{k\geq 0}\nu(k)\log\left(\frac{\nu(k)}{\mu(k)}\right)=\infty\text{ for all such }\nu 
\text{ or }\sum_{k<0}\nu(k)\log\left(\frac{\nu(k)}{\mu(k)}\right)=\infty\text{ for all such }\nu,
\end{align*}
as otherwise, if $\nu$ and $\kappa$ were distributions with
$f_\nu,f_\kappa\in\effdom$ that give a finite sum in the first and
second case respectively, then a distribution $\pi$ satisfying
$\pi(k)\propto\nu(k)$ for $k\geq 0$ and $\pi(k)\propto\kappa(k)$
for $k<0$ would still have moment generating function in $\effdom$
and would have $\H(\pi|\mu)<\infty$. Without loss of generality
assume that
\begin{align}
\sum_{k\geq 0}\nu(k)\log\left(\frac{\nu(k)}{\mu(k)}\right)=\infty\text{ for all such }\nu.\label{eq:badtail}
\end{align}
If $\beta=\infty$, then for any $\nu$ with $f_\nu\in\effdom$,
$\kappa$ satisfying $\kappa(k)=\nu(k)/\nu(-\infty,0]$ for $k\leq
0$ and $\kappa(k)=0$ otherwise, also has $f_\kappa\in\effdom$, but
(\ref{eq:badtail}) does not hold. So it follows that $\beta<\infty$.
Also $f_\mu(\epsilon)<\infty$ for some $\epsilon>0$. To see this,
assume the contrary, that $f_\mu\in\mathcal{D}_{[\alpha^*,0]}$ for some $\alpha^*\leq 0$.
Then $\nu(k)\propto e^{-\beta k}\mu(k)$ for $k\geq 0$ has $f_\nu\in\effdom$
for some suitable behaviour of the left tail. But
\begin{align*}
\sum_{k\geq 0}\nu(k)\log\left(\frac{\nu(k)}{\mu(k)}\right)=\sum_{k\geq 0}e^{-\beta k}(-\beta k)\mu(k)<\infty
\end{align*}
as $e^{-\beta k}(-\beta k)\leq 0$ for all $k$. As this is impossible, it must follow that $f_\mu(\epsilon)<\infty$ for some $\epsilon>0$.

We can assume $X_1$
is not bounded above, as otherwise (1) holds trivially for any $\gamma>\beta$. Since the
support of $\mu$ has no upper bound we can find a $\nu$ supported
on a subset of the support of $\mu$ that satisfies $\lim_{k\rightarrow\infty}\nu(k)e^{\gamma k}=\infty$ for all $\gamma>0$, where the limit is taken along the support of $\nu$. To see this, let $\{b_n\}$ contained in the support $\mu$ satisfy $b_n>n$ for all $n$, and let $\{b_n\}$ be the support of $\nu$ with $\nu(b_n)\propto n^{-2}$. We can write $\nu$ as $\nu(k)=C_\nu
g(k)\mu(k)$ for some non-negative function $g$ defined on the support
of $\mu$. $C_\nu$ is a constant chosen so that $\nu$ is a distribution.
Without loss of generality we can set $g(k)\geq 3$ for all $k\geq
0$, as $\sum_{k\geq 0}(g(k)+3)\mu(k)<\infty$ if and only if
$\sum_{k\geq 0}g(k)\mu(k)<\infty$. As stated before $\sum_{k\geq
0}e^{\epsilon k}\mu(k)<\infty$ for small enough positive $\epsilon$ and
so
\begin{align*}
\lim_{k\rightarrow\infty}e^{\epsilon k}(g(k)+3)\mu(k)=\infty\Leftrightarrow\lim_{k\rightarrow\infty}e^{\epsilon k}g(k)\mu(k)=\infty,
\end{align*}
if the limit as $k\rightarrow\infty$ is taken along the support of
$\nu$. So $\nu$ still satisfies $\lim_{k\rightarrow\infty}\nu(k)e^{\gamma k}=\infty$ for all $\gamma>0$. Define $\kappa(k)=D_\nu\nu(k)/\log g(k)$
for all $k\geq 0$, where $D_\nu$ is chosen so that $\kappa$ is a
distribution. As $\log g(k)>1$, $\kappa(k)$ is summable and so $D_\nu$ exists. The behaviour of $\kappa(k)$ for $k<0$ is unimportant.
It follows that
\begin{align*}
\sum_{k\geq 0}\kappa(k)\log\left(\frac{\kappa(k)}{\mu(k)}\right)&=\sum_{k\geq 0}D_\nu\frac{\nu(k)}{\log g(k)}\log\left(D_\nu C_\nu\frac{g(k)}{\log g(k)}\right) \\
&\leq \sum_{k\geq 0}D_\nu\frac{\nu(k)}{\log g(k)}\log\left(D_\nu C_\nu\right)+\sum_{k\geq 0}D_\nu\frac{\nu(k)}{\log g(k)}\log\left(g(k)\right) \\
&\leq \sum_{k\geq 0}D_\nu\nu(k)\log\left(D_\nu C_\nu\right)+\sum_{k\geq 0}D_\nu\nu(k) \\
&<\infty.
\end{align*} 
So $\H(\kappa|\mu)<\infty$ and so $f_\kappa(\gamma)<\infty$ for some $\gamma\not\in[\alpha,\beta]$. Since this is true no matter what the behaviour of $\kappa$ along its left tail it must follow that $f_\kappa(\gamma)<\infty$ for some $\gamma>\beta$, and so 
\begin{align*}
\sum_{k\geq 0}e^{(2\theta+\beta) k}\frac{\nu(k)}{\log g(k)}<\infty\Rightarrow e^{(2\theta+\beta) k}\frac{\nu(k)}{\log g(k)}\rightarrow 0
\end{align*}
as $k\rightarrow\infty$ for some $\theta>0$. But $e^{\theta k}\nu(k)\rightarrow\infty$, so it follows that 
\begin{align*}
\frac{e^{(\theta+\beta) k}}{\log g(k)}\rightarrow 0.
\end{align*}
Thus there exists $K\geq0$ such that
for all $k>K$, $\log g(k)>e^{(\theta+\beta) k}$. Then for any $\lambda\geq 0$
\begin{align*}
E(e^{\lambda e^{(\theta+\beta )X_1}}) 
&\leq e^\lambda+\sum_{k\geq 0}e^{\lambda e^{(\theta+\beta) k}}\mu(k)\\
&=e^\lambda+\frac{1}{C_\nu}\sum_{k\geq 0}e^{\lambda e^{(\theta+\beta) k}}\frac{\nu(k)}{g(k)} \\ 
&\leq e^\lambda+\frac{1}{C_\nu}
\left(
\sum_{k=0}^Ke^{\lambda e^{(\theta+\beta) k}}\frac{\nu(k)}{g(k)}
+ \sum_{k>K} e^{\lambda e^{(\theta+\beta) k}}e^{-e^{(\theta+\beta) k}}\nu(k)
\right).
\end{align*}
For $0\leq\lambda<1$ the above series converges. As $E(e^{\lambda
e^{(\theta+\beta )X_\mu}})\leq 1$ for $\lambda<0$, $e^{(\beta+\theta)
X_1}$ has a moment generating function finite in a neighbourhood
of the origin. As finiteness of the moment generating function in
a neighbourhood of the origin is a sufficient condition for the
random walk associated with a distribution to satisfy the conditions
of Cram\'er's Theorem, we have that $(4)\Rightarrow (1)$.

In order to extend the result to an arbitrary distribution, we need the following notation. For any distribution $\nu$, define $\nu^*(k)=\nu([k+1))$ as the discretisation of $\nu$. Notice that if $X$ is a random variable with distribution $\nu$ and $X^*=\lfloor X\rfloor$ where $\lfloor\cdot\rfloor$ is the floor function, then $X^*$ has distribution $\nu^*$ and $X-1\leq X^*\leq X$, which implies that $\mathcal{D}_{f_\nu}=\mathcal{D}_{f_{\nu^*}}$.

Now assume (4) for some arbitrary distribution $\mu$, i.e. that $\H(\nu|\mu)=\infty$ for all $\nu$ with $f_\nu\in\effdom$. Consider $\mu^*$, and any $\kappa$ supported on a subset of the support of $\mu^*$ with $f_{\kappa}\in\effdom$. Then $\kappa(k)=g(k)\mu^*(k)$ for some non-negative function $g$ defined on $\mathbb{Z}$. Define $\nu(dx)=g(\lfloor x\rfloor)\mu(dx)$, then $\nu^*=\kappa$, and the effective domain of the moment generating function of $\nu$ is the same as that of $\nu^*=\kappa$, so $\H(\nu|\mu)=\infty$. Then 
\begin{align*}
\infty&=\int_{-\infty}^\infty\nu(dx)\log\frac{\nu(dx)}{\mu(dx)} 
=\int_{-\infty}^\infty g(\lfloor x\rfloor)\mu(dx)\log g(\lfloor x\rfloor) \\
&=\sum_{k=-\infty}^\infty g(k)\mu(k)\log g(k) 
=\sum_{k=-\infty}^\infty \kappa(k)\log \left(\frac{\kappa(k)}{\mu^*(k)}\right).
\end{align*} 
So $\H(\kappa|\mu^*)=\infty$. This is true for any $\kappa$ supported
on a subset of the support of $\mu^*$ with $f_{\kappa}\in\effdom$.
As any $\kappa$ that is not supported on a subset of the support
of $\mu^*$ also satisfies $\H(\kappa|\mu^*)=\infty$,
Lemma \ref{thm:5equiv} (4) holds for $\mu^*$ and so as we already established $(4)\Rightarrow (1)$
for discrete distributions, if $X_{\mu^*}$ is a random variable with distribution $\mu^*$ then the moment generating
function of $e^{\gamma X_{\mu^*}}$ is finite in a neighbourhood of
the origin for some $\gamma\not\in[\alpha,\beta]$. By arguments
similar to those showing domain equivalence of MGFs of discretised and
non-discretised distributions, it follows that the moment
generating function of $e^{\gamma X_{1}}$ is also finite in a
neighbourhood of the origin for the same $\gamma\not\in[\alpha,\beta]$.
As finiteness of the moment generating function in a neighbourhood
of the origin is a sufficient condition for the random walk associated
with a distribution to satisfy the conditions of Cram\'er's Theorem, we have that
$(4)\Rightarrow (1)$.

The proof is now almost complete. $(1)\Rightarrow (2)\Rightarrow
(3)\Rightarrow(4)\Rightarrow (1)$. That $(5)\Rightarrow (4)$ is
readily seen from Proposition
\ref{thm:I=H}. But then $(4)\Rightarrow (2)$ by the equivalence
of (1)-(4), and $(2)\Rightarrow (5)$ as $\effdom\subset\effdomsub$,
so that $(5)\Leftrightarrow (4)$ and we have equivalence of all 5
statements.

\end{proof}

\begin{proof}[{\bf Corollary \ref{cor:mineffdom}.}]
$\;$
\begin{enumerate}
\item Let $\beta_0$ be the supremum over all values of $\gamma$ such
that the random walk associated with $e^{\gamma X_1}$ satisfies
the conditions of Cram\'er's Theorem, and let $\alpha_0$ be the
infimum. Note that $\beta_0\geq 0$, $\alpha_0\leq 0$. Let
$\beta\geq\beta_0$ , $\alpha\leq\alpha_0$, and let $f_\nu$ satisfy
$\IM(f_\nu)<\infty$ and $[\alpha,\beta]\subset\mathcal{D}_{f_\nu}$.
Then for any $f\in\effdom$ mimicking $f_\nu$, $\IM(f)=\IM(f_\nu)$
as  Lemma \ref{thm:5equiv} (1) does not hold for $\alpha,\beta$, so that Lemma
\ref{thm:5equiv} (5) does not hold, and so we can apply statements
\ref{lemma:finitemimic} and \ref{lemma:valuesformimics} of Lemma
\ref{lemma:mimics}. If $\alpha>\alpha_0$ or $\beta<\beta_0$, Lemma
\ref{thm:5equiv} (1) holds, so that Lemma \ref{thm:5equiv} (5) holds
and so any $f\in\effdom$ mimicking $f_\nu$ satisfies
$\IM(f)=\infty\neq\IM(f_\nu)$. 
\item Let $f$ be such that $\IM(f)<\infty$, $f\in\effdom$. Then Lemma
\ref{thm:5equiv} (5) does not hold, so that Lemma \ref{thm:5equiv}
(1) does not hold, so that
$\overline{\effdomf}=[\alpha,\beta]\supset[\alpha_0,\beta_0]$. 
\item 
For $n$ sufficiently large, let $\mu_n$ be $\mu$ conditioned on $[-n,n]$. If $\alpha_0=\beta_0=0$,
let $f_n\in\mathcal{D}_{[-\frac{1}{n},\frac{1}{n}]}$ mimic $f_{\mu_n}$ and
let $f$ be the MGF finite only at 0, then $f_n\rightarrow f$ and so
$\IM(f)\leq\lim_{n\rightarrow\infty}\IM(f_n)=0$. If
$[\alpha_0,\beta_0]\neq \{0\}$, then letting $\alpha,\beta=0$ Lemma
\ref{thm:5equiv} (1) holds for some $\gamma\neq 0$ so that Lemma
\ref{thm:5equiv} (5) holds, and so $\IM(f)=\infty$.
\end{enumerate}
\end{proof}

\begin{proof}[{\bf Proposition \ref{thm:Iformimics}. $I_M(f)$ for $f$ mimics.}]
$\;$
\begin{enumerate}[(a)]
\item Let $[\alpha,\beta]=[0,0]$ in the statement of Lemma
\ref{thm:5equiv}. Then we have that for $f\in\MSpacesmall$ finite
only at 0
\begin{align*}
\IM(f)=\infty\Leftrightarrow\inf_{\nu:f_\nu=f}\H(\nu|\mu)=\infty
\end{align*}
using $(4)\Leftrightarrow (5)$. We have already shown that
$\IM(f)\in\{0,\infty\}$ by Lemma \ref{lemma:mimics}; we will show the same for the other term. See that for
any $\nu$ with $f_\nu=f$, $\kappa=a\nu+(1-a)\mu$ satisfies $f_\kappa=f$
for any $a\in(0,1]$, so that if $\H(\nu|\mu)<\infty$ for some such
$\nu$,
\begin{align*}
\inf_{\nu:f_\nu=f}\H(\nu|\mu)\leq\H(\kappa|\mu)\leq a\H(\nu|\mu)\rightarrow 0
\end{align*}
as $a\rightarrow 0$ by the convexity of $\H(\cdot|\mu)$, so $\inf_{\nu:f_\nu=f}\H(\nu|\mu)\in\{0,\infty\}$ also and equality holds. 
\item See first that $[\alpha,\beta]\neq[0,0]$ if $f\in\effdom$ is not a MGF but mimics one. Note that for any $g\in\effdom$ and $\mu_n(dx)=\mu(dx)/\mu([-n,n])$ for any $n$, $\gamma g+(1-\gamma)f_{\mu_n}\in\effdom$ for any $\gamma\in(0,1]$, so if $\IM(g)<\infty$ for some $g\in\effdom$
\begin{align*}
\inf_{\{g\in \effdom,g\text{ a MGF}\}}\IM(g)\leq \IM(\gamma g+(1-\gamma)f_{\mu_n})&\leq\gamma \IM(g)+(1-\gamma)\IM(f_{\mu_n})\rightarrow \IM(f_{\mu_n})
\end{align*}
as $\gamma\rightarrow 0$. By letting $n\rightarrow\infty$ we get
$\inf_{\{g\in \effdom,g\text{ a MGF}\}}\IM(g)=0$, and so in general
\begin{align*}
\inf_{\{g\in \effdom,g\text{ a MGF}\}}\IM(g)\in\{0,\infty\}.
\end{align*}
For the 0 value equality holds in the statement of
Proposition \ref{thm:Iformimics}(b) by statements \ref{lemma:finitemimic} and \ref{lemma:valuesformimics} of Lemma \ref{lemma:mimics}, and for the $\infty$ value equality holds by Lemma \ref{thm:5equiv} $(4)\Rightarrow (5)$, as by Proposition \ref{thm:I=H} (4) is equivalent to the expression $\inf_{\{g\in \effdom,g\text{ a MGF}\}}\IM(g)$ taking the value $\infty$.
\end{enumerate}
\end{proof}
\subsection{Section 3}
\begin{proof}[{\bf Theorem \ref{thm:MLDP}. LDP for large deviation estimates.}]
By Theorem \ref{thm:MLDPnonchar} we have that a LDP holds in $\MSpacesmall$.
Proposition \ref{thm:I=H} contains part (a), while Proposition
\ref{thm:Iformimics} is equivalent to parts part (b) and (c). The
subsequent statements about $\{\estLambda\}$, $\{\estJ\}$ and $\{\estI\}$ are
proved in Theorems \ref{thm:LLDP}, \ref{thm:JLDP} and \ref{thm:ILDP} respectively.
\end{proof}

\begin{proof}[{\bf Corollary \ref{cor:weaklaw}. Weak laws.}]
As $\{\estM\}$ satisfies a LDP in $(\MSpacesmall,\taw)$, we know that
$ P(\estM\in \cdot)$ is eventually concentrated on the closed set
$A_0=\{f:\IM(f)=0\}$ \cite{Lewis95}, i.e. $ P(\estM\in G)\rightarrow
1$ for every open $G$ containing $A_0$. First note that $f\in A_0$
implies that $f(\theta)=f_\mu(\theta)$ for all $\theta\in\effdomf$,
as $\H(\nu|\mu)$ has a unique zero at $\nu=\mu$. See that
$C_x^\gamma=\{f:f(\gamma)\leq x\}$ is a closed set for every $x$
and every $\gamma$ by a proof identical to that of Proposition \ref{prop:Mcomp}, so fix some $\gamma$ such that $f_\mu(\gamma)=
E(e^{\gamma X_1})<\infty$, and let $x>f_\mu(\theta)$. Then by the
weak law of large numbers
\begin{align*}
 P(\estM\in C_x^\gamma)= P(\estM(\gamma)\leq x)\rightarrow 1
\end{align*}
and so $ P(\estM\in \cdot)$ is eventually concentrated on
$C_x^\gamma$. Also note that since $\MSpacesmall$ is a Normal space,
disjoint closed sets have disjoint open neighbourhoods and so we
can show that $ P(\estM\in\cdot)$ is eventually concentrated
on $C_x^\gamma\cap A_0$, in the following way. Fix any neighbourhood
$G$ of $C_x^\gamma\cap A_0$, and note that $C_x^\gamma\backslash
G$ and $A_0\backslash G$ are disjoint closed sets. Then they have
disjoint open neighbourhoods $U$ and $V$ respectively, and so $G\cup
U$ and $G\cup V$ are open neighbourhoods of $C_x^\gamma$ and $A_0$
respectively, with intersection $G$. Thus
$P(\estM\in G\cup U)\rightarrow 1, P(\estM\in G\cup V)\rightarrow 1$, 
and so by the inequality 
\begin{align*}
1\geq  P(\estM\in (G\cup U)\cup(G\cup V))= P(\estM\in G\cup U)+ P(\estM\in G\cup V)- P(\estM\in G)
\end{align*}
we have that $P(\estM\in G)\rightarrow 1$.
Thus $P(\estM\in\cdot)$ is eventually concentrated on $C_x^\gamma\cap
A_0$, which consists of functions $f$ satisfying $f(\theta)=f_\mu(\theta)$
everywhere $f$ is finite, which includes $\gamma$. We can similarly
show that the measures are eventually concentrated on
$C_{x_1}^{\gamma_1}\cap C_{x_2}^{\gamma_2}\cap A_0$ if
$f_\mu(\gamma_1)<x_1$, $f_\mu(\gamma_2)<x_2$. Assume now that $f_\mu$
is finite in a neighbourhood of the origin. Fix some neighbourhood
$V_k(f_\mu)$ of $f_\mu$, and let $\gamma_+>0$ and $\gamma_-<0$ be
large enough so that $C_{x_+}^{\gamma_+}\cap C_{x_-}^{\gamma_-}\cap
A_0\subset V_k(f_\mu)$ for $x_+>f_\mu(\gamma_+)$, $x_->f_\mu(\gamma_-)$.
Then $V_k(f_\mu)$ is a neighbourhood of $C_{x_+}^{\gamma_+}\cap
C_{x_-}^{\gamma_-}\cap A_0$ and so $P(\estM\in V_k(f_\mu))\rightarrow 1$
as $n\rightarrow\infty$. This can be done for all $k$ and thus $
P(\estM\in \cdot)$ is eventually concentrated on the singleton set
$\{f_\mu\}$ and we have a weak law. If $f_\mu(\theta)=\infty$ for
all $\theta>0$ (resp. $\theta<0$), then in the above construction
we need only use $\gamma^-$ (resp. $\gamma^+$), and if $f_\mu$ is
finite only at 0 then $A_0$ is already a singleton set.

The results for $\{\estLambda\}$, $\{\estJ\}$ and $\{\estI\}$ follow from the
continuous mapping theorem, e.g. \cite{Billingsley99}[Theorem 2.7].

\end{proof}
\subsection*{Section 7}
\begin{proof}[{\bf Theorem \ref{thm:loynes}. LDP for Loynes' exponent estimates}]
First see that $\MSpacesmall$ is metrizable \cite{Beer93}, as is
$[0,\infty]$. Let $\delta_0$ be the Dirac measure at 0, let $G$ be
the Loynes' exponent mapping, and let $f\in\MSpacesmall$ neither
equal $f_{\delta_0}$ nor mimic it. It must follow that $f$ is equal
to 1 at at most one non-zero point in its domain. Therefore if
$G(f)\in[0,\infty)$, for $x>G(f)$ $d((x,1),\epi(f))>0$ and so for
any sequence $\{f_n\}$ converging to $f$ in $\taw$ $f_n(x)>1$ for
large enough $n$ and so $G(f_n)\leq x$. Similarly if $G(f)\in(0,\infty]$,
for any $0<x<G(f)$ $f(x)<1$ and as $\taw$ convergence implies
pointwise convergence on the interior of $\effdomf$, $f_n(x)<1$ for
large enough $n$ and so $G(f_n)\geq x$. Together this proves that
$G(f_n)\rightarrow G(f)$ and so $G$ is continuous at $f$. Therefore
$G^f$ is a singleton set containing only $G(f)$, the first condition
of Theorem \ref{Garcia} holds, and for the sequence $f_n=f$ for all
$n$ the second condition holds.

Now consider $f_{\delta_0}$. Assume $\IM(f_{\delta_0})<\infty$ as
otherwise we need not consider it. Note that this implies that $\mu$
has a point mass at 0. Fix any $y\in [0,\infty]$ and any MGF
$f_{\nu}\in\MSpacesmall$ finite everywhere satisfying $G(f_{\nu})=y$
and $\IM(f_{\nu})<\infty$. Letting $\nu$ equal some convex combination
of $\mu$ conditioned on $(-\infty,0)$ and $\mu$ conditioned on
$(0,M)$ for some $M$ will show that such an $f_{\nu}$ exists for
any $y\in[0,\infty]$. Let $\la=\alpha f_\nu+(1-\alpha)f_{\delta_0}$.
Then $\la\rightarrow f_{\delta_0}$ as $\alpha\rightarrow 0$ and
$G(\la)=y$ for all $\alpha\in(0,1)$, so that $G(\la)\rightarrow y$
as $\alpha\rightarrow 0$. This gives us that $G^{f_{\delta_0}}=[0,\infty]$,
and as $[0,\infty]$ is compact the first condition of Theorem
\ref{Garcia} holds trivially. Moreover as $\la=f_{\alpha
\nu+(1-\alpha)\delta_0}$,

\begin{align*}
\IM(\la)&=\H(\alpha \nu+(1-\alpha)\delta_0|\mu) \\
&=\int_{\mathbb{R}}(\alpha \nu(dx)+(1-\alpha)\delta_0(dx))\log\left(\frac{\alpha \nu(dx)+(1-\alpha)\delta_0(dx)}{\mu(dx)}\right) \\
&=\alpha\int_{\mathbb{R}\backslash\{0\}} \nu(dx)\log\left(\frac{\alpha \nu(dx)}{\mu(dx)}\right)+(\alpha\nu(0)+(1-\alpha)\delta_0(0))\log\left(\frac{\alpha \nu(0)+(1-\alpha)\delta_0(0)}{\mu(0)}\right) \\
&\rightarrow \delta_0(0)\log\left(\frac{\delta_0(0)}{\mu(0)}\right)\text{ as }\alpha\rightarrow 0 \\
&=\H(\delta_0|\mu) \\
&=\IM(f_{\delta_0}).
\end{align*}
Taking the limit has the desired result as $\H(\nu|\mu)<\infty$.
Therefore for every $y\in G^{f_{\delta_0}}$, the second condition
of Theorem \ref{Garcia} holds. If $g$ mimics $f_{\delta_0}$ and
$\IM(g)<\infty$, say with $\sup\mathcal{D}_g=b$ then the proof is
similar. For any $y\in[0,b]$ the same $f_{\nu}$ can be chosen, so
that $G^{g}$ contains $[0,b]$. As any sequence converging to $g$
must obey $f_n(\theta)\rightarrow\infty$ for all $\theta>b$ it
follows that $\limsup_{n\rightarrow\infty}G(f_n)\leq b$ and so
$G^{g}=[0,b]$ and agin the first condition of Theorem \ref{Garcia}
holds trivially. With $\la=\alpha f_{\nu}+(1-\alpha)g$ we have that
$\la$ mimics the MGF $f_{\alpha \nu+(1-\alpha)\delta_0}$. As
$\IM(g)<\infty$ and $\mathcal{D}_g=\mathcal{D}_{\la}$ we can use
statements \ref{lemma:finitemimic} and \ref{lemma:valuesformimics}
of Lemma \ref{lemma:mimics} to show that $\IM(\la)=\IM(f_{\alpha
\nu+(1-\alpha)\delta_0})=\H(\alpha \nu+(1-\alpha)\delta_0|\mu)$.
Identical calculations follow as above to show that $\IM(\la)\rightarrow
\IM(f)$.

Therefore we have that for all $f\in\MSpacesmall$ satisfying
$\IM(f)<\infty$, the conditions of Theorem \ref{Garcia} hold, and
so we have that $\{\estLo\}$ satisfies the LDP in $[0,\infty]$ with
rate function
\begin{align*}
\ILo(x)&=\inf_{f:x\in G^{f}}\IM(f).
\end{align*}
As seen by the characterisations of $G^f$ above for all $f\in\MSpacesmall$, for $x\in[0,\infty]$ $x\in G^f$ if and only $G(f)=x$, $f=f_{\delta_0}$, or $f$ mimics $f_{\delta_0}$ and is finite at $x$, i.e., if and only if $f\in C_x$. Therefore 
\begin{align*}
\ILo(x)=\inf_{f\in C_x}\IM(f),
\end{align*}
as required.
\end{proof}

\begin{proof}[{\bf Lemma \ref{lemma:positive}. Positive on $(\theta_\mu,\infty{]}$}]
As $C_x$ is closed, $\ILo(x)=\IM(f)$ for some $f$ satisfying $f(x)\leq 1$. As $f_\mu(x)>1$, $f$ does not mimic $f_\mu$ and so $\IM(f)>0$.
\end{proof}
\begin{proof}[{\bf Proposition \ref{prop:loynesuniquezero}. Conditions for unique zero}]
Fix $x\in[0,\theta_\mu)$. If $\ILo(x)=0$, then $\ILo(x)=\IM(f)$ for some
$f\in\mathcal{D}_{(-\infty,x]}^\subset$ mimicking $f_\mu$ with
$f(x)\leq 1$, and as $\IM(f)=0$, by the contrapositive of Lemma \ref{thm:5equiv}$(5)\Rightarrow (1)$ Cram\'er's
Theorem does not hold for $e^{yX_1}$ for any $y>x$. If $\ILo(x)>0$,
then $\IM(f)=\infty$ for $f$ mimicking $f_\mu$ with
$\mathcal{D}_f=\mathcal{D}_{f_\mu}\cap(-\infty,x]$, and so
$\IM(g)=\infty$ for all $g$ satisfying
$\mathcal{D}_g=\mathcal{D}_{f_\mu}\cap(-\infty,x]$ by Theorem
\ref{thm:MLDP}(c). So by Lemma \ref{thm:5equiv}$(5)\Rightarrow (1)$ Cram\'er's Theorem
holds for $e^{yX_1}$ for some
$y\not\in\mathcal{D}_{f_\mu}\cap(-\infty,x]$. As this cannot be
true for any $y\not\in\mathcal{D}_{f_\mu}$, it must hold for some
$y>x$, proving the first statement.  Therefore if $e^{yX_1}$ satisfies the conditions of
Cram\'er's Theorem for all $y\in(0,\theta_\mu)$ then $\ILo(x)>0$ for all $x\in[0,\theta_\mu)$ and so $\ILo$ has a
unique zero, and if $e^{xX_1}$ does not satisfy the conditions
of Cram\'er's Theorem for some $x\in(0,\theta_\mu)$, then $e^{yX_1}$
does not satisfy the conditions of Cram\'er's Theorem for any $y>x$,
so $\ILo(x)=0$ and the second statement also holds.
\end{proof}

\begin{proof}[{\bf Theorem \ref{thm:propertiesofILo}. Properties of $\ILo$}]
$\;$
\begin{enumerate}[(a)]
\item Fix any finite $x>\theta_\mu$, and any function $g$ with
$g(x)\leq 1$. Then let $f_{\mu_n}$ be the moment generating function
of $\mu$ conditioned on $[-n,n]$. As discussed before
$\IM(f_{\mu_n})\rightarrow \IM(f_\mu)$, and as $f_\mu(x)>1$,
$f_{\mu_n}(x)>1$ for large enough $n$. For such an $n$,
\begin{align*}
ag(x)+(1-a)f_{\mu_n}(x)=1
\end{align*}
for some $a\in(0,1]$, and so 
\begin{align*}
\ILo(x)&\leq \IM(ag+(1-a)f_{\mu_n})\leq a\IM(g)+(1-a)\IM(f_{\mu_n})\leq \IM(g)+\IM(f_{\mu_n}) \\
\Rightarrow \ILo(x)&\leq \lim_{n\rightarrow\infty}(\IM(g)+\IM(f_{\mu_n}))=\IM(g).
\end{align*}
We used $f_{\mu_n}$ because we didn't know that $f_\mu$ was finite
at $x$. This is true for all $g$ with $g(x)\leq 1$, and so it follows
from the definition of $\ILo$ that
\begin{align*}
\ILo(x)=\inf_{f:f(x)\leq 1}\IM(f).
\end{align*}
Then for all $\theta_\mu<x<y$ 
\begin{align*}
\ILo(\theta_\mu)\leq\ILo(x)=\inf_{f:f(x)\leq 1}\IM(f)\leq\inf_{f:f(y)\leq 1}\IM(f)=\ILo(y)
\end{align*}
as $f(y)\leq 1\Rightarrow f(x)\leq 1$. Moreover
\begin{align*}
\ILo(x)=\inf_{f:f(x)\leq 1}\IM(f)\leq \inf_{f\in C_\infty}\IM(f)=\ILo(\infty),
\end{align*}
so $\ILo$ is increasing on $[\theta_\mu,\infty]$. For $x<\theta_\mu$ satisfying $\ILo(x)>0$, 
\begin{align}
\ILo(x)=\inf_{f:f(x)=1}\IM(x)\label{eq:niceinf}
\end{align}
as $\IM(f)=\infty$ for
all functions $f$ infinite on $y>x$ 
by Proposition \ref{prop:loynesuniquezero} and Lemma
\ref{thm:5equiv}. For any $g$ with $g(x)\geq 1$
\begin{align*}
ag(x)+(1-a)f_\mu(x)=1
\end{align*}
for some $a\in(0,1]$ and so 
\begin{align*}
\ILo(x)\leq \IM(ag+(1-a)f_\mu)\leq a\IM(g)+(1-a)\IM(f_\mu)\leq\IM(g).
\end{align*}
This is true for all $g$ with $g(x)\geq 1$ and so using (\ref{eq:niceinf})
\begin{align*}
\ILo(x)=\inf_{f:f(x)\geq 1}\IM(f).
\end{align*}
Similar to before we can now show that $\ILo(\theta_\mu)\leq\IM(x)\leq\IM(y)$ for any $y<x$. This is trivially true for $y<x<\theta_\mu$ with $\ILo(x)=0$. Moreover 
\begin{align*}
\ILo(x)=\inf_{f:f(x)\geq 1}\IM(f)\leq\inf_{f\in C_0}\IM(f)=\ILo(0),
\end{align*}
and so $\ILo$ is decreasing on $[0,\theta_\mu]$. \\
\item If $\mu_+, \mu_-$ are $\mu$ conditioned on $[0,\infty),(-\infty,0]$ respectively then $\ILo(0)\leq\IM(f_{\mu_+})<\infty$ and $\ILo(\infty)\leq\IM(f_{\mu_-})<\infty$, so together with part (a) this proves that $\ILo$ is finite everywhere and bounded. 
\item Let $\infty>x>\theta_\mu$. $\ILo(x)=\IM(f)>0$ for some $f\in C_x$. If $f(x)<1$ then for $f_{\mu_n}$ with $\mu_n$ the distribution $\mu$ conditioned on $(-\infty,n]$ we can choose $n$ large enough so that $f_{\mu_n}(x)>1$ and $\IM(f_{\mu_n})<\IM(f)$, as $\IM(f_{\mu_n})\rightarrow\IM(f_\mu)=0$. Moreover
\begin{align*}
af_{\mu_n}(x)+(1-a)f(x)=1
\end{align*}
for some $a\in(0,1)$ and so 
\begin{align*}
\ILo(x)\leq a\IM(f_{\mu_n})+(1-a)\IM(f)<\IM(f).
\end{align*}
So we cannot have $f(x)<1$, and therefore we must have $f(x)=1$. 
\item First let $\infty>x\geq\theta_\mu$. By monotonicity and lower semi-continuity, we need only show that 
\begin{align}
\lim_{\epsilon\downarrow 0}\ILo(x+\epsilon)\leq\ILo(x)\label{contlimit}
\end{align}
in order to show continuity at $x$. $\ILo(x)=\IM(f)$ for some $f\in C_x$. We may assume that $f$ is a moment generating function, as if $x>\theta_\mu$ then $f(x)=1$ by (c), so if it is mimicking a moment generating function $\fmimic$ then it also holds that $\fmimic\in C_x$. If $x=\theta_\mu$ then we can have $f=f_\mu$. So $f=f_\nu$ for some distribution $\nu$. Assume for now that $f_\nu(y)$ is finite for some $y>x$. Then $f_\nu(x+\epsilon)>1$ for all $\epsilon>0$, and for sufficiently small $\epsilon$ 
\begin{align*}
a_\epsilon f_\nu(x+\epsilon)+(1-a_\epsilon)\mum(x+\epsilon)=1
\end{align*}
for 
\begin{align*}
a_\epsilon=\frac{\mum(x+\epsilon)-1}{\mum(x+\epsilon)-f_\nu(x+\epsilon)}\rightarrow 1
\end{align*}
as $\epsilon\rightarrow 0$. Therefore 
\begin{align*}
\ILo(x+\epsilon)\leq \IM(a_\epsilon f_\nu+(1-a_\epsilon)\mum)\leq a_\epsilon\IM(f_\nu)+(1-a_\epsilon)\IM(\mum)\rightarrow \IM(f_\nu)=\IM(x)
\end{align*}
as $\epsilon\downarrow 0$. Now assume $f_\nu(y)=\infty$ for all $y>x$. Then the support of $\nu$ is not bounded above. Let $f_{\nu_n}$ be the moment generating function of $\nu$ conditioned on $(-\infty,n]$. Then $f_{\nu_n}$ is finite on $[0,\infty)$, $f_{\nu_n}(x)<f_\nu(x)$, and $f_{\nu_n}$ converges point-wise to $f_\nu$, or to $\infty$ where $f_\nu$ is infinite. Therefore if $\epsilon_n$ is such that $f_{\nu_n}(x+\epsilon_n)=1$ ($\epsilon_n$ may not exist for small $n$ if $\nu([0,n])=0$), then $\epsilon_n>0$ and $\epsilon_n\rightarrow 0$ as $n\rightarrow\infty$. Moreover
\begin{align*}
\ILo(x+\epsilon_n)&\leq \IM(f_{\nu_n}) \\
\Rightarrow \lim_{n\rightarrow\infty}\ILo(x+\epsilon_n)&\leq \lim_{n\rightarrow\infty}\IM(f_{\nu_n})=\IM(f_\nu).
\end{align*}
As $\ILo$ is increasing on $[\theta_\mu,\infty)$, to prove (\ref{contlimit}) it is sufficient to prove that $\lim_{n\rightarrow\infty}\ILo(x+\epsilon_n)\leq\ILo(x)$ for some positive sequence $\epsilon_n$ converging to 0. Now assume $0<x\leq\theta_\mu$. This time is is sufficient to show that 
\begin{align}
\lim_{\epsilon\downarrow 0}\ILo(x-\epsilon)\leq\ILo(x)\label{contlimit2}
\end{align}
in order to prove continuity at $x$. If $\IM(x)=0$ and moreover $\IM(y)=0$ for some $y<x$ then proof is trivial. So we need only prove continuity for $x$ satisfying $\IM(x)>0$. Then $\ILo(x)=\IM(f)>0$, so that $f(x)=1$ as shown in part (a). Then for all $\epsilon>0$, 
\begin{align*}
a_\epsilon f(x-\epsilon)+(1-a_\epsilon)\mup(x-\epsilon)=1
\end{align*}
for 
\begin{align*}
a_\epsilon=\frac{\mup(x-\epsilon)-1}{\mup(x-\epsilon)-f(x-\epsilon)}\rightarrow 1
\end{align*}
as $\epsilon\rightarrow 0$. Moreover, 
\begin{align*}
\ILo(x-\epsilon)&\leq a_\epsilon\IM(f)+(1-a_\epsilon)\IM(\mup) \\
\Rightarrow \lim_{\epsilon\downarrow 0}\ILo(x-\epsilon)&\leq \IM(f)=\ILo(x), 
\end{align*}
as required. Continuity at $\infty$ and at 0 is immediate by monotonicity and lower-semicontinuity, completing the proof. 
\end{enumerate}
\end{proof}
{\bf Acknowledgments:} The authors thank Gerald Beer (California
State University) for pointing them in the right direction in proving
the compactness in Proposition \ref{prop:Mcomp}. This work was
conducted while Brendan Williamson was visiting the National
University of Ireland Maynooth from Dublin City University and then
Duke University, for which funding is gratefully acknowledged.

\end{document}